\newtheorem{Theorem}{Theorem}[section]
\newtheorem{Definition}[Theorem]{Definition} 
\newtheorem{Proposition}[Theorem]{Proposition}
\newtheorem{Lemma}[Theorem]{Lemma} 
\newtheorem{Assumption}[Theorem]{Assumptions}
\newtheorem{Algorithm}[Theorem]{Algorithm}
\newtheorem{Remark}[Theorem]{Remark}
\numberwithin{equation}{section}
\def\@date{23 December 2015}
\let\insertdate\@date
\newcommand{\ang}[1]{\langle #1 \rangle}
\newcommand{\bb}[1]{\lbrace #1 \rbrace}
\newcommand{\co}{\operatorname{co}}
\newcommand{\epi}{\operatorname{epi}}
\newcommand{\sign}{\operatorname{sign}}
\newcommand{\gph}{\operatorname{gph}}
\newcommand{\di}{\operatorname{d}}
\newcommand{\diam}{\operatorname{diam}}
\newcommand{\dH}{\di_H}
\newcommand{\inter}{\operatorname{int}}
\newcommand{\norm}[1]{\left\|#1\right\|}
\newcommand{\Orem}[1]{\mathcal{O}(#1)}
\newcommand{\rk}{\operatorname{rank}}
\newcommand{\RSU}{\mathcal{R}_\leq}
\newcommand{\scal}[2]{\langle {#1},{#2} \rangle}
\newcommand{\Y}{\operatorname{Y}}
\newcommand{\R}{{\mathbb R}}
\newcommand{\N}{{\mathbb N}}
\definecolor{palegreen}{rgb}{0.2,0.6,0.2}
\author{Robert Baier}
\address[Robert Baier]{Mathematisches Institut,
Universit\"at Bayreuth,  
95440 Bayreuth, Germany}
\email{robert.baier@uni-bayreuth.de}
\author{Thuy T. T. Le}
\address[Thuy T.T. Le]{Universit\`a di Padova, Dipartimento di Matematica, via Trieste 63, 35121 Padova, Italy}
\email{lethienthuy@gmail.com}
\thanks{The second author is supported by a PhD fellowship for foreign students at the Universit\`a di Padova funded by Fondazione CARIPARO. This paper was developed while the second author was visiting the Department of Mathematics of the University of Bayreuth}
\begin{document}

\title{Construction of the Minimum Time Function \newline Via Reachable Sets of Linear Control Systems. \newline Part~I: Error Estimates}

\keywords{minimum time function, reachable sets, linear control problems, set-valued Runge-Kutta methods}

\subjclass[2000]{49N60 93B03 (49N05 49M25 52A27)}

\date{\today}
\begin{abstract}
The first part of this paper is devoted to introducing an approach to compute the approximate minimum time function of control problems which is based on reachable set approximation and uses arithmetic operations for convex compact sets. In particular, in this paper the theoretical justification of the proposed approach is restricted to a class of linear control systems.   The error estimate of the fully discrete reachable set is provided by employing the Hausdorff distance to the continuous-time reachable set. The detailed procedure solving the corresponding discrete set-valued problem is described. Under standard assumptions, by means of convex analysis and knowledge of the regularity of the true minimum time function, we estimate the error of its approximation. 
Numerical examples are included in the second part.
\end{abstract}
\maketitle

\markright{\hfill Construction of the minimum time function via reachable sets. Part 1 \hfill}
\markleft{\hfill R. Baier, T. T. T. Le \hfill}

\section{Introduction}

Reachable sets have attracted several mathematicians since longer times 
both in theoretical and in numerical analysis. One common definition collects
end points of feasible solutions of a control problem starting from a common inital set 
and reaching a point
\emph{up to} a given end time, the other definition is similar but prescribes a \emph{fixed}
end time in which the point is reached. The former definition automatically leads to a
monotone behavior of the reachable sets with respect to inclusion, since the reachable set
up to a given time is the union of reachable sets for a fixed time. 

Reachable sets with and without control constraints appear in control theory (e.g.~in
stability results),
in optimal control (e.g.~in analysis for robustness) and in set-valued analysis.
For reachable sets at a given end time of linear or nonlinear control problems,
properties like convexity for linear control problems 
at a given end time (due to Aumann and his study of Aumann's integral for set-valued
maps in \cite{Aum}), closedness and connectedness under weak assumptions for nonlinear
systems (see e.g.~\cite{Dav,AC}), \ldots\ are well-known. 
The Lipschitz continuity of reachable sets with respect to the initial value is also
established and is a result of the Filippov theorem which proves the existence of
neighboring solutions for Lipschitz systems. To mention one further result is the
density of solutions of the non-convexified control problem in the relaxed system
in which the right-hand side is convexified.

On the other hand reachable sets appear in many applications. They appear in natural
generalizations of differential equations with discontinuous right-hand side and hybrid
systems (e.g.~via the Filippov regularization in \cite{Fil}), in gradient inclusions
with maximally monotone right-hand side (see e.g.~\cite{AC}), as generalizations
of control problems (see e.g.~\cite{AF}), \ldots\ .
Many practical examples are mentioned in~\cite{ABS-P,AC,Alth,BGX,DL} and in references therein.

The approaches for the numerical computation of reachable sets mainly split into
two classes, those for reachable sets up to a given time and the other ones 
for reachable sets at a given end time. We will give here only exemplary references,
since the literature is very rich. There are methods based on overestimation and 
underestimation of reachable sets based on ellipsoids \cite{KV}, zonotopes \cite{Alth,GlGM} or
on approximating the reachable set with support functions resp.~supporting points 
 \cite{BL,KrK,lG,lGG,Alth}. Other popular and well-studied approaches involve level-set methods,
semi-Lagrangian schemes
and the computation of an associated Hamilton-Jacobi-Bellman equation, 
see~e.g.~\cite{BCD,BFZ,F,FF,G,MBT} or are based on the viability concept~\cite{ABS-P}
and the viability kernel algorithm~\cite{S-P}. Further methods \cite{BL,BLcham,BPhD,B,Cha} are set-valued generalizations of
quadrature methods and Runge-Kutta methods initiated by the works~\cite{D2F,V_int,DF,W,DV}.
Solvers for optimal control problems are another source for methods approximating
reachable sets, see~\cite{BBCG,BGX,GJ}.
In~\cite{DL,BPhD} a more detailed review of some methods up to 1994 
appeared, see also~\cite{BGX,BFZ} and the books and book chapters in~\cite{FF,KV,F} for a more recent overview and references therein.

Here, we will focus on set-valued quadrature methods and set-valued Runge-Kutta methods
with the help of support functions or supporting points,
since they do not suffer on the wrapping effect or on an exploding number of vertices
and the error of restricting computations only for finitely many directions 
can be easily estimated. Furthermore, they belong to the most efficient and fast methods
(see~\cite[Sec.~3.1]{Alth}, \cite[Chap.~9, p.~128]{lG}) for linear control problems
to which we restrict the computation of the minimum time function $T(x)$.
These methods enjoy an increasing attention also in neighboring research fields, 
e.g.~in the computation of viability kernels~\cite{MKMOD} or reachable sets 
for hybrid systems in~\cite{lG,lGG_hybrid} as well as in the computation of
interpolation of set-valued maps \cite{L},
Minkowski sums of convex sets (\cite{BPhD,SGJ})
as well as of the Dini, Michel-Penot and Mordukhovich subdifferentials~\cite{BF,BFR}.
We refer to~\cite{BPhD,BL,lG,lGG} (and references therein) for technical details on the numerical implementation, 
although we will lay out the main ideas of this approach for reader's convenience.

In optimal control theory the regularity of the minimum time functions is studied
intensively, see e.g.~in~\cite{CMW,CNN} and references therein.
For the error estimates in this paper it will be essential to single out example classes
for which the minimum time function is Lipschitz (no order reduction of the set-valued method) 
or H\"older-continuous with exponent $\frac{1}{2}$ (order reduction by the square root).
 
Minimum time functions are usually computed by solving
Hamilton-Jacobi-Bellman (HJB) equations and by the dynamic programming principle, see e.g.~\cite{BBZ,BFZ,BF1,BF2,BFS,CL,GL,MB}. 
In this approach, the minimal requirement on the regularity of $T(x)$ is the continuity, see e.g.~\cite{BF1,CL,GL}. The solution of a HJB equation with suitable boundary conditions gives immediately -- after a transformation -- the minimum time function and its level sets provide a description of the reachable sets. A natural question occurring is whether it is also possible to do the other way around, i.e.~reconstruct the minimum time function $T(x)$ if knowing the reachable sets. One of the attempts was done in \cite{BBZ,BFZ}, where the approach is based on PDE solvers and 
 on the reconstruction of the optimal control and solution via the value function.
On the other hand, our approach in this work is completely different. It is based on 
 very efficient quadrature methods for convex reachable sets as described in Section~3.
 
In this article we present a novel approach for calculating the minimum time function.
The basic idea is to use set-valued methods for approximating reachable sets at a given
end time with computations based on support functions resp.~supporting points.
By reversing the time and start from the convex target as initial set we compute
the reachable sets for times on a (coarser) time grid. 
Due to the strictly expanding condition for reachable sets,
the corresponding end time is assigned to all boundary points of the computed 
reachable sets. Since we discretize in time and in space (by choosing a finite number
of outer normals for the computation of supporting points), the vertices of the polytopes
forming the fully discrete reachable sets are considered as data points of an irregular
triangulated domain. On this simplicial triangulation, a piecewise linear approximation
yields a fully discrete approximation of the minimum time function. 

The well-known
interpolation error and the convergence results for the set-valued method can be applied
to yield an easy-to-prove error estimate by taking into account the regularity of the 
minimum time function. It requires at least the continuity and 
involves the maximal diameter of the simplices in the used
triangulation. A second error estimate is proved without explicitely assuming
the continuity of the minimum time function and depends only on the time interval 
between the computed (backward) reachable sets. The computation does not need
the nonempty interior of the target set in contrary to the Hamilton-Jacobi-Bellman approach,
for singletons the error estimate even improves. It is also able to compute discontinuous
minimum time functions, since the underlying set-valued method can also compute
lower-dimensional reachable sets. There is no explicit dependence of the algorithm
and the error estimates on the smoothness of optimal solutions or controls. 
These results 
are devoted to reconstructing discrete 
optimal trajectories which reach a set of supporting points from a given target 
for a class of linear control problems and also proving the convergence of discrete 
optimal controls by the use of nonsmooth and variational analysis. 
The main tool is Attouch's theorem that allows to benefit from the convergence of the discrete reachable
sets to the time-continuous one.

The plan of the article is as follows: in Section~2 we collect notations, definitions
and basic properties of convex analysis, set operations, reachable sets and the
minimum time function. The convexity of the reachable set for linear control problems
and the characterization of its boundary via the level-set of the minimum time function
is the basis for the algorithm formulated in the next section. We 
briefly introduce the reader to set-valued quadrature methods and Runge-Kutta methods
and their implementation and
discuss the convergence order for the fully discrete approximation of reachable sets at a given time both in time and in space.
In the next subsection we present the error estimate for the fully discrete minimum time function
which depends on the regularity of the continuous minimum time function and on the
convergence order of the underlying set-valued method. Another error estimate expresses
the error only on the time period between the calculated reachable sets. The last subsection discusses the construction of discrete optimal trajectories and convergence of discrete optimal controls.

Various accompaning examples can be found in the second part~\cite{BLp2}.

\section{Preliminaries}\label{sec:preliminaries}
In this  section we will recall some notations, definitions as well as basic knowledge of convex analysis and control theory for later use. Let $\mathcal{C}(\mathbb{R}^n)$ be the set of convex, compact,  nonempty  subsets of $\mathbb{R}^n$, 
 $\| \cdot \|$ be the Euclidean norm and $\scal{\cdot}{\cdot}$ the inner product in $\mathbb{R}^n$,
 $B_r(x_0)$ be  the closed (Euclidean) ball with radius $r>0$ centered at $x_0$  and $S_{n-1}$ be the unique sphere in $\mathbb{R}^n$. 
Let $A$ be a subset of $\mathbb{R}^n$, $M$ be an $n\times n$ real matrix, then $B_r(A):= \bigcup_{x\in A}B_r(x)$, $\norm{M}$ denotes the \emph{lub-norm}
 of $M$ with respect to $\|\cdot\|$, i.e.~the spectral norm.
The \emph{convex hull}, the \emph{boundary} and the \emph{interior} of a set $A$ are signified by $\co(A),\,\partial A,\, \inter(A)$ respectively. We define the support function, the supporting points in a given direction and the set arithmetic operations as follows.
\begin{Definition}
Let $A\in \mathcal{C}(\mathbb{R}^n),\, l \in  \R^n$. The \emph{support function} and the \emph{supporting face} of $A$ in the direction $l$ are defined as, respectively,
\begin{equation*}
\begin{aligned}
\delta^*(l,A)&:= \max_{x\in A} \,\scal{l}{x},\\
 \Y(l,A)&:=\bb{x\in A \colon  \scal{l}{x}=\delta^*(l,A)}.
\end{aligned}
\end{equation*}
An element of the supporting face is called \emph{supporting point}.
\end{Definition}

\begin{Definition}
Let $A,B \in \mathcal{C}(\mathbb{R}^n),\, \lambda \in \R,\,M\in \mathbb{R}^{m\times n}$. Then the \emph{scalar multiplication}, the \emph{image of a set under a linear map} and the \emph{Minkowski sum} are defined as follows:
\begin{align*}
\lambda A&:=\bb{\lambda a \colon a\in A},\\
 M A&:=\bb{M a \colon a\in A},\\
A+B&:=\bb{a+b \colon a\in A,\, b\in B}.
\end{align*}
\end{Definition}
In the following propositions we will recall known properties of the convex hull, the support function and the supporting points when applied to the set operations introduced above (see e.g.~\cite[Chap.~0]{AC}, \cite[Sec.~4.6, 18.2]{ABS-P}, \cite{BPhD,lG,Alth}).
Especially, the convexity of the arithmetic set operations becomes obvious.
\begin{Proposition}
Let $A,B\in \mathbb{R}^n,\,M\in \mathbb{R}^{m\times n}$ and $\lambda \in \mathbb{R}$.
Then,
\begin{equation*}
\begin{aligned}
\co(A+B)&=\co(A)+\co(B),\\
\co(\lambda A)&=\lambda \co(A),\\
\co(MA)&=M \co(A).
\end{aligned}
\end{equation*}

\end{Proposition}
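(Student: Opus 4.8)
The statement to prove asserts the three identities
\[
\co(A+B)=\co(A)+\co(B),\quad \co(\lambda A)=\lambda\co(A),\quad \co(MA)=M\co(A).
\]
The plan is to prove each by a double inclusion, exploiting that the convex hull is the smallest convex set containing a given set, together with the fact that the three set operations preserve convexity.

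\textbf{First identity.} For $\co(A+B)\subseteq\co(A)+\co(B)$, I would note that $\co(A)+\co(B)$ is convex (a Minkowski sum of two convex sets is convex, which I would verify directly by taking a convex combination $t(a_1+b_1)+(1-t)(a_2+b_2)=\bigl(ta_1+(1-t)a_2\bigr)+\bigl(tb_1+(1-t)b_2\bigr)$ and observing each summand lies in the respective convex hull) and contains $A+B$; since $\co(A+B)$ is the smallest convex set containing $A+B$, the inclusion follows. For the reverse inclusion $\co(A)+\co(B)\subseteq\co(A+B)$, I would take a generic element $x+y$ with $x=\sum_i\lambda_i a_i\in\co(A)$ and $y=\sum_j\mu_j b_j\in\co(B)$ (finite convex combinations, by Carath\'eodory or the standard description of the convex hull), and rewrite
\[
x+y=\sum_{i}\sum_{j}\lambda_i\mu_j\,(a_i+b_j),
\]
where the coefficients $\lambda_i\mu_j$ are nonnegative and sum to $\bigl(\sum_i\lambda_i\bigr)\bigl(\sum_j\mu_j\bigr)=1$; since each $a_i+b_j\in A+B$, this exhibits $x+y$ as a convex combination of points of $A+B$, hence $x+y\in\co(A+B)$.

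\textbf{Second and third identities.} These are analogous and somewhat simpler, so I would treat the linear-map case $\co(MA)=M\co(A)$, from which the scalar case $\co(\lambda A)=\lambda\co(A)$ follows as the special instance $M=\lambda I$ (after separately handling $\lambda=0$ if one wishes, though the general argument covers it). For $M\co(A)\subseteq\co(MA)$, I would take $M\bigl(\sum_i\lambda_i a_i\bigr)=\sum_i\lambda_i\,(Ma_i)$ by linearity of $M$; each $Ma_i\in MA$ and the coefficients form a convex combination, so the image is in $\co(MA)$. For $\co(MA)\subseteq M\co(A)$, I would observe that $M\co(A)$ is convex, being the linear image of the convex set $\co(A)$ (again checked directly via $M(tu+(1-t)v)=tMu+(1-t)Mv$), and it contains $MA$; minimality of $\co(MA)$ then gives the inclusion.

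\textbf{Main obstacle.} There is no serious obstacle here; the only point demanding minor care is the bookkeeping in the first identity, namely verifying that the product coefficients $\lambda_i\mu_j$ indeed constitute a valid convex combination and that the double sum correctly reproduces $x+y$. The remaining arguments reduce to the twin principles that $\co(\cdot)$ is the minimal convex superset and that each operation preserves convexity, so the proof is essentially a matter of recording these two facts cleanly for each case.
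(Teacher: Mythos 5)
Your proof is correct. Note that the paper does not actually prove this proposition: it is stated as a recollection of known properties of the convex hull under the arithmetic set operations, with references to the literature (Aubin--Cellina, Aubin--Bayen--Saint-Pierre, and others), so there is no in-paper argument to compare against. Your double-inclusion argument is the standard one and is complete: the inclusions $\co(A+B)\subseteq\co(A)+\co(B)$ and $\co(MA)\subseteq M\co(A)$ follow from minimality of the convex hull once one checks that Minkowski sums and linear images preserve convexity, and the reverse inclusions follow from explicit convex-combination manipulations, where your key bookkeeping step --- that the product coefficients $\lambda_i\mu_j$ are nonnegative, sum to one, and that $\sum_{i}\sum_{j}\lambda_i\mu_j(a_i+b_j)$ collapses to $x+y$ precisely because $\sum_i\lambda_i=\sum_j\mu_j=1$ --- is exactly right. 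Reducing the scalar case to the linear-map case via $M=\lambda I_n$ is also valid and economical. The only cosmetic remark is that the proposition's hypothesis ``$A,B\in\mathbb{R}^n$'' should be read as $A,B$ being \emph{subsets} of $\mathbb{R}^n$ (a slight abuse of notation in the paper), which is how you correctly interpreted it.
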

\begin{Proposition}\label{prop:Minkowski}
Let $A,B \in \mathcal{C}(\mathbb{R}^n)$, $\lambda \geq 0$, $M\in \mathbb{R}^{m\times n}$
and $l\in \R^n$. \\
Then $\lambda A,\,A+B \in \mathcal{C}(\mathbb{R}^n)$ and $MA\in \mathcal{C}(\mathbb{R}^m)$. Moreover,
\begin{center}
$
\begin{matrix}
 \delta^*(l,\lambda A)=\lambda \delta^*(l,A),& \Y(l,\lambda A)=\lambda \Y(l,A),\\
 \delta^*(l,A+B)=\delta^*(l,A)+\delta^*(l,B),& \Y(l,A+B)=\Y(l,A)+\Y(l,B),\\
 \delta^*(l,M A)=\delta^*(M^T l,A),& \Y(l,M A)=M \Y(M^T l,A).
\end{matrix}
$
\end{center}
\end{Proposition}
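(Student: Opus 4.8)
The plan is to establish the three support function identities first, by unwinding the definition of $\delta^*$ as a maximum, and then to bootstrap the three supporting face identities from them through a two-inclusion argument. I would begin with the membership claims. Nonemptiness is immediate in each case, and convexity of $\lambda A$, $A+B$ and $MA$ follows directly from the preceding proposition on convex hulls (or from the elementary fact that these three operations preserve convexity). For compactness I would note that $a\mapsto \lambda a$ and $a\mapsto Ma$ are continuous, so $\lambda A$ and $MA$ are continuous images of the compact set $A$; for $A+B$ I would write it as the image of the compact product $A\times B$ under the continuous map $(a,b)\mapsto a+b$. Hence all three sets lie in the stated classes $\mathcal{C}(\R^n)$ resp.~$\mathcal{C}(\R^m)$.

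Next I would treat the support function identities. For scalar multiplication, since $\lambda\ge 0$ the nonnegative factor can be pulled out of the maximum, $\delta^*(l,\lambda A)=\max_{a\in A}\scal{l}{\lambda a}=\lambda\max_{a\in A}\scal{l}{a}$. For the Minkowski sum the maximization over the two summands decouples, yielding $\delta^*(l,A+B)=\max_{a\in A}\scal{l}{a}+\max_{b\in B}\scal{l}{b}$. For the linear image I would use the adjoint relation $\scal{l}{Ma}=\scal{M^T l}{a}$ to rewrite the maximum over $MA$ as a maximum over $A$ in the transformed direction $M^T l$, which gives $\delta^*(l,MA)=\delta^*(M^T l,A)$.

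Finally I would prove the supporting face identities by double inclusion, using the support function identities just obtained. The inclusion ``$\supseteq$'' is routine in all three cases: if each factor realizes its own support value, then the corresponding combination realizes the support value of the combined set and thus lies in the appropriate supporting face. The reverse inclusion ``$\subseteq$'' is the only place requiring a genuine argument. For the Minkowski sum, the key step is the following squeeze: given $x=a+b\in\Y(l,A+B)$ we have $\scal{l}{a}+\scal{l}{b}=\delta^*(l,A)+\delta^*(l,B)$, while simultaneously $\scal{l}{a}\le\delta^*(l,A)$ and $\scal{l}{b}\le\delta^*(l,B)$; equality of the sums forces equality in each separate inequality, so $a\in\Y(l,A)$ and $b\in\Y(l,B)$. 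The linear-image case is analogous, transporting the equality $\scal{l}{Ma}=\delta^*(l,MA)$ through the adjoint to $\scal{M^T l}{a}=\delta^*(M^T l,A)$, so that $a\in\Y(M^T l,A)$ and $x=Ma\in M\,\Y(M^T l,A)$. For scalar multiplication with $\lambda>0$ I would divide the defining equality by $\lambda$; the degenerate case $\lambda=0$ must be handled separately, where both sides collapse to $\{0\}$.

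I expect the Minkowski squeeze to be the main (and essentially the only) obstacle: it is the one point where the identity is not a mechanical rewriting of the maximum but rests on the observation that a sum can attain the sum of two upper bounds only if both bounds are attained. Everything else is bookkeeping on the definition of $\delta^*$ together with the adjoint identity $\scal{l}{Ma}=\scal{M^T l}{a}$.
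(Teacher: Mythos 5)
Your proof is correct: the membership claims, the three support function identities, and the double-inclusion arguments for the supporting faces (including the squeeze for the Minkowski sum and the separate treatment of $\lambda=0$) are all sound and complete. Note, however, that the paper itself gives no proof of this proposition at all -- it is stated as a recollection of known facts with citations to standard references (Aubin--Cellina, Aubin--Bayen--Saint-Pierre, and others) -- so there is no in-paper argument to compare against; what you have written is precisely the standard textbook proof that those references contain, with the decoupling-of-maxima and the equality-forcing squeeze as the only genuinely non-mechanical steps.
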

By means of the support function or the supporting points, one can fully represent a convex compact set,
either as intersection of halfspaces by the Minkowski duality or as convex hull of supporting points. 
\begin{Proposition}
Let $A\in \mathcal{C}(\mathbb{R}^n)$. Then
\begin{align*}\displaystyle
A&=\bigcap_{l\in S_{n-1}}\big\{x\in \mathbb{R}^n \colon  \scal{l}{x} \le \delta^*(l,A) \big\},\\
A&=\co \bigg( \bigcup_{l\in S_{n-1}}\bb{ y(l,A)} \bigg),\\
\partial A&=\bigcup_{l\in S_{n-1}}\bb{\Y(l,A)},
\end{align*}
where $ y(l,A)$ is an arbitrary selection of $\Y(l,A)$. 
\end{Proposition}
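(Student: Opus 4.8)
The plan is to establish the three identities in turn, relying on the definitions of $\delta^*(l,A)$ and $\Y(l,A)$ together with the two basic separation results of finite-dimensional convex analysis: strict separation of a point from a closed convex set, and the existence of a supporting hyperplane at each boundary point.

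For the first identity I would prove the two inclusions separately. The inclusion ``$\subseteq$'' is immediate from the definition of the support function: if $x\in A$ then $\scal{l}{x}\le\max_{a\in A}\scal{l}{a}=\delta^*(l,A)$ for every $l\in S_{n-1}$, so $x$ lies in each of the halfspaces. For ``$\supseteq$'' I argue by contraposition: if $x_0\notin A$, then since $A$ is convex and compact the strict separation theorem yields a direction $l\neq 0$ with $\scal{l}{x_0}>\delta^*(l,A)$; normalizing $l$ to unit length preserves this strict inequality, so $x_0$ violates the constraint for that $l\in S_{n-1}$ and hence lies outside the intersection.

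For the boundary identity (third line) I would treat both inclusions via supporting hyperplanes. If $x\in\Y(l,A)$ then $x\in A$ and $\scal{l}{x}=\delta^*(l,A)$; since $\scal{l}{x+\varepsilon l}=\delta^*(l,A)+\varepsilon\norm{l}^2>\delta^*(l,A)$ for $\varepsilon>0$, the point $x+\varepsilon l$ leaves $A$, so every neighbourhood of $x$ meets the complement and $x\in\partial A$. Conversely, for $x\in\partial A$ the supporting hyperplane theorem provides $l\neq 0$ with $\scal{l}{x}=\delta^*(l,A)$ (if $\inter A=\emptyset$ the set lies in a hyperplane and the claim is immediate), and after normalization $x\in\Y(l,A)$. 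This gives $\partial A=\bigcup_{l\in S_{n-1}}\bb{\Y(l,A)}$.

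The middle identity is where the real care is needed. The inclusion ``$\supseteq$'' is trivial, since each supporting point lies in $A$ and $A$ is convex. For ``$\subseteq$'' the natural route is to combine the boundary identity with the elementary fact that every compact convex set equals the convex hull of its boundary: a line through an interior point meets $\partial A$ in two points expressing it as a convex combination, while if $\inter A=\emptyset$ every point is already a boundary point. Together with $\partial A=\bigcup_l\Y(l,A)$ this yields $A=\co\bigl(\bigcup_l\Y(l,A)\bigr)$. The main obstacle is the passage from the full supporting faces $\Y(l,A)$ to a single arbitrary selection $y(l,A)$: a non-exposed extreme point (for instance a junction point of a stadium-shaped set, whose only supporting direction exposes a whole edge) need not equal $y(l,A)$ for any $l$ and, being extreme, cannot be recovered as a convex combination of the remaining selected points. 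To handle this honestly one either keeps the entire faces, or passes to the closed convex hull and invokes the Krein--Milman/Straszewicz circle of ideas (exposed points are dense in the extreme points, and $y(l,A)$ is forced to coincide with each exposed point), giving $A=\overline{\co}\bigl(\bigcup_l\bb{y(l,A)}\bigr)$. I would flag this explicitly, since the bare equality with an arbitrary selection and without closure can fail.
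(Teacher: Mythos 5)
Your proofs of the first and third identities are correct and standard: support-function definition plus strict separation of a point from a compact convex set for the halfspace representation, and supporting hyperplanes at boundary points (with the degenerate case $\inter A=\emptyset$ handled correctly) for the boundary identity. Note that the paper gives no proof of this proposition at all -- it is stated as a recollection of known facts with citations to the references listed just before it -- so there is no argument of the authors to compare against; yours fills in what they left to the literature.

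Your hesitation about the middle identity is not over-caution but a correct diagnosis: as literally stated, $A=\co\bigl(\bigcup_{l\in S_{n-1}}\{y(l,A)\}\bigr)$ for an \emph{arbitrary} fixed selection is false. Concretely, let $A\subset\R^2$ be the stadium $\co\bigl(B_1((-1,0))\cup B_1((1,0))\bigr)$ and $x^*=(1,1)$ a junction point. The only unit vector supporting $A$ at $x^*$ is $l=(0,1)$, and its face $\Y(l,A)$ is the whole top edge; if the selection picks, say, the midpoint $(0,1)$ of that edge, then $x^*$ is selected for no direction, and since $x^*$ is an extreme point of $A$ it cannot be written as a convex combination of points of $A$ other than itself, so the right-hand side misses it. The two repairs you propose are exactly the standard ones: keep the full faces, in which case $\bigcup_{l}\Y(l,A)=\partial A$ by the third identity and $\co(\partial A)=A$ for compact convex $A$; or take the closed convex hull of the selections, which works by Straszewicz's theorem, since every exposed point is a singleton face and hence is forced into the selection, and the closed convex hull of the exposed points is $A$. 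For the paper's purposes nothing downstream is damaged: the algorithm only ever uses finitely many directions together with the quantitative approximation result quoted from [BBCG, Proposition 3.4], which bounds the Hausdorff distance $\dH(A,A_\Delta)$ rather than asserting an exact representation by selected supporting points.
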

We also recall the definition of Hausdorff distance which is the main tool to measure the error of reachable set approximation.
\begin{Definition}
Let $C,D\in   \mathcal{C}(\mathbb{R}^n),\,x\in \mathbb{R}^n$. Then the \emph{distance function} from $x$ to $D$  is $\di(x,D):=\min_{d\in D} \norm{x-d}$
and the Hausdorff distance between $C$ and $D$ is defined as
\begin{align*}
\dH(C,D)&:=\max \bb{\max_{x\in C} \di(x,D),\max_{y\in D} \di(y,C)},\\
\text{or equivalently}&\\
\dH(C,D)&:= \min \bb{ r \ge 0 \colon C\subset  B_r(D) \text{ and } D\subset  B_r(C) }.
\end{align*}
\end{Definition}
The next proposition will be used for a special form of the space discretization 
of convex sets via the convex hull of finitely many supporting points.
\begin{Proposition}[\mbox{\cite[Proposition 3.4]{BBCG}}]
   Let $A\in \mathcal{C}(\mathbb{R}^n)$, choose $\varepsilon > 0$ with a finite set 
   of normed directions
  \label{prop:approx_conv_set}
    \begin{align*}
       S_{n-1}^{\Delta} & := \bigcup_{k=1,\ldots,N_n} \{l^k\} \subset S_{n-1}
    \end{align*}
   with  $N_n \in \N$, $\dH(S_{n-1},S_{n-1}^{\Delta})\le \varepsilon$ and consider the 
   approximating polytope
    \begin{align*}
       A_\Delta &=\co \bb{ \bigcup_{k=1,\ldots,N_n} \bb{ y(l^k,A) } }
         \subset A,
    \end{align*}
   where $ y(l^k,A)$ is an arbitrary selection of $\Y(l^k,A)$, $k=1,\ldots,N_n$. 
   Then
    \begin{align*}
       \dH(A, A_\Delta) & \leq 2 \diam(A) \cdot \varepsilon,
    \end{align*}
   where $\diam(A)$ stands for the diameter of the set $A$.
\end{Proposition}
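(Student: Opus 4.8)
The plan is to reduce the two-sided Hausdorff distance to a one-sided excess, rewrite that excess as a gap between support functions, and then exploit the optimality built into the supporting points $y(l^k,A)$. First I would note that $A_\Delta\subseteq A$: every selection $y(l^k,A)$ lies in $A$, and since $A$ is convex its convex hull $A_\Delta$ stays inside $A$. Hence $\di(z,A)=0$ for all $z\in A_\Delta$, the second entry in the maximum defining $\dH(A,A_\Delta)$ drops out, and it suffices to bound the excess $\max_{x\in A}\di(x,A_\Delta)$. Fix $x\in A$; if $x\in A_\Delta$ there is nothing to do, so assume $x\notin A_\Delta$ and let $p$ be its metric projection onto the convex set $A_\Delta$. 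For the unit vector $l:=(x-p)/\norm{x-p}$ the projection inequality gives $\delta^*(l,A_\Delta)=\scal{l}{p}$, whence
\begin{align*}
\di(x,A_\Delta)=\scal{l}{x-p}=\scal{l}{x}-\delta^*(l,A_\Delta)\le \delta^*(l,A)-\delta^*(l,A_\Delta),
\end{align*}
so everything comes down to bounding the support-function gap $\delta^*(l,A)-\delta^*(l,A_\Delta)$ for an arbitrary direction $l\in S_{n-1}$.

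For this, write $x^\ast:=y(l,A)$ so that $\delta^*(l,A)=\scal{l}{x^\ast}$, use the covering hypothesis $\dH(S_{n-1},S_{n-1}^{\Delta})\le\varepsilon$ to pick $l^k\in S_{n-1}^{\Delta}$ with $\norm{l-l^k}\le\varepsilon$, and set $y^k:=y(l^k,A)\in A_\Delta$. Since $y^k\in A_\Delta$ we have $\delta^*(l,A_\Delta)\ge\scal{l}{y^k}$, so
\begin{align*}
\delta^*(l,A)-\delta^*(l,A_\Delta)\le\scal{l}{x^\ast-y^k}=\scal{l^k}{x^\ast-y^k}+\scal{l-l^k}{x^\ast-y^k}.
\end{align*}
The decisive observation is that $y^k$ is a supporting point of $A$ in the direction $l^k$, so $\scal{l^k}{y^k}=\delta^*(l^k,A)\ge\scal{l^k}{x^\ast}$, i.e.\ $\scal{l^k}{x^\ast-y^k}\le0$; the remaining term is handled by Cauchy--Schwarz together with $x^\ast,y^k\in A$, giving $\scal{l-l^k}{x^\ast-y^k}\le\norm{l-l^k}\,\norm{x^\ast-y^k}\le\varepsilon\,\diam(A)$. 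Combining these, $\di(x,A_\Delta)\le\varepsilon\diam(A)$ for every $x\in A$, hence $\dH(A,A_\Delta)\le\varepsilon\diam(A)$, which even slightly improves the factor $2$ in the statement.

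The main obstacle is conceptual rather than computational: the crude bound $\scal{l}{x^\ast-y^k}\le\diam(A)$ throws away the closeness of the directions entirely, and one must instead split $l=(l-l^k)+l^k$ and kill the term $\scal{l^k}{x^\ast-y^k}$ using that $y^k$ maximizes $l^k$ over $A$. A secondary point that deserves care is the passage $\di(x,A_\Delta)\le\delta^*(l,A)-\delta^*(l,A_\Delta)$; I would justify it via the metric projection and the supporting hyperplane of $A_\Delta$ at $p$ as above, which keeps the argument self-contained and avoids invoking the general support-function characterization of the Hausdorff distance.
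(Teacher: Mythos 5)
Your proof is correct. Note that the paper itself offers no internal proof to compare against: it states this proposition with the citation \cite[Proposition~3.4]{BBCG} and moves on, so your argument is necessarily an independent route. It is also a clean and slightly stronger one. The reduction to the one-sided excess (via $A_\Delta\subset A$), the passage from $\di(x,A_\Delta)$ to the support-function gap through the metric projection, and the switch from an arbitrary direction $l$ to a nearby grid direction $l^k$ are all standard and sound; the decisive step is your splitting $\scal{l}{x^*-y^k}=\scal{l^k}{x^*-y^k}+\scal{l-l^k}{x^*-y^k}$ and discarding the first term because $y^k$ maximizes $\scal{l^k}{\cdot}$ over $A$. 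This yields $\dH(A,A_\Delta)\le \diam(A)\,\varepsilon$, i.e.\ constant $1$ instead of $2$, and the diameter enters naturally as $\norm{x^*-y^k}$ with both points in $A$. The factor $2$ in the stated bound is what one typically gets from the cruder estimate that handles the two effects separately -- the Lipschitz continuity of the support function in the direction, $|\delta^*(l,A)-\delta^*(l^k,A)|\le \norm{l-l^k}\max_{a\in A}\norm{a}$, plus the defect $|\scal{l-l^k}{y^k}|\le \norm{l-l^k}\max_{a\in A}\norm{a}$ -- followed by a translation of $A$ to replace $\max_{a\in A}\norm{a}$ by $\diam(A)$; your decomposition avoids both the doubling and the translation normalization. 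Two trivial points you could make explicit for completeness: $A_\Delta$ is nonempty, compact and convex (a polytope), so the metric projection exists and is unique, and $\Y(l,A)\neq\emptyset$ for every $l\in S_{n-1}$ by compactness of $A$, so the choice of $x^*$ is legitimate.
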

Some basic notions of nonsmooth and variational analysis which are needed in constructing and proving the convergence of controls are now introduced. The main references for this part are \cite{CLSW,Rockaf}. 
Let $A$ be a subset in $\R^n$ and $f: A \rightarrow \R \cup \{\infty\}$ be a function. The \emph{indicator function} of $A$ and the \emph{epigraph} of $f$ be defined as 
\begin{equation}
\begin{aligned}
I_A(x)=
\begin{cases}
0 & \quad \text{ if } x\in A\\
+\infty & \quad \text{ otherwise}
\end{cases}, \quad \epi f =\bb{(x,r) \in \R^n \times \R \colon x\in A, \ r\ge f(x)}.
\end{aligned}
\end{equation}
\begin{Proposition}\label{pro:indicator}
Let $A$ be a closed, convex and nonempty set. Then $I_A$ is a lower semicontinuous, 
convex function and $\epi I_A$ is a closed, convex set.
\end{Proposition}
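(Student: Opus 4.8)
The plan is to reduce all three assertions to a single computation of the epigraph. First I would observe that, directly from the definition, a pair $(x,r)\in\R^n\times\R$ lies in $\epi I_A$ precisely when $r\ge I_A(x)$: if $x\notin A$ then $I_A(x)=+\infty$ and no such $r$ exists, whereas if $x\in A$ then $I_A(x)=0$ and the condition reduces to $r\ge 0$. Hence
\[
  \epi I_A = A\times[0,\infty).
\]
This identity is the crux of the argument, and once it is in place the remaining work is routine.

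Next I would establish that $A\times[0,\infty)$ is closed and convex. Convexity follows because the Cartesian product of two convex sets is convex: $A$ is convex by hypothesis and the half-line $[0,\infty)$ is convex. Closedness follows in the same way, the product of two closed sets being closed, with $A$ closed by hypothesis and $[0,\infty)$ closed in $\R$. This already yields the last assertion of the proposition, namely that $\epi I_A$ is a closed, convex set.

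Finally, the lower semicontinuity and convexity of $I_A$ itself follow from the standard correspondences between a function and its epigraph (see e.g.~\cite{Rockaf}): an extended-real-valued function is convex if and only if its epigraph is convex, and it is lower semicontinuous if and only if its epigraph is closed. Applying these equivalences to the closed convex set $\epi I_A=A\times[0,\infty)$ delivers both properties simultaneously, so no separate verification of convexity or semicontinuity by hand is required.

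I do not anticipate any genuine obstacle here; the only point demanding slight care is the correct treatment of the value $+\infty$ when identifying the epigraph, ensuring that points with $x\notin A$ contribute nothing. The nonemptiness of $A$ guarantees that $I_A$ is proper, so the epigraph correspondences apply without degeneracy.
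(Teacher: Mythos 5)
Your proof is correct. Note that the paper offers no argument of its own for this proposition: its ``proof'' is a one-line citation to an exercise in the book of Clarke, Ledyaev, Stern and Wolenski, so your write-up is a self-contained replacement rather than a parallel of the paper's reasoning. Your route -- establishing the identity $\epi I_A = A\times[0,\infty)$, noting that a product of closed (resp.\ convex) sets is closed (resp.\ convex), and then invoking the standard equivalences ``$f$ convex $\Leftrightarrow$ $\epi f$ convex'' and ``$f$ lower semicontinuous $\Leftrightarrow$ $\epi f$ closed'' -- is exactly the textbook argument the cited exercise expects, and it works equally well with the paper's formulation of the epigraph, since for $x\notin A$ the condition $r\ge I_A(x)=+\infty$ excludes all finite $r$. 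Two minor remarks: the two epigraph equivalences hold for arbitrary extended-real-valued functions, so the properness of $I_A$ (which you deduce from $A\neq\emptyset$) is not actually needed for them to apply -- nonemptiness of $A$ matters only if one insists, as the statement implicitly does, that $I_A$ be a proper convex function; and your logical order (epigraph first, function properties second) is the cleaner of the two possible directions, since verifying convexity and lower semicontinuity of $I_A$ by hand and then deducing the epigraph properties would require case distinctions with $+\infty$.
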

\begin{proof}
see e.g.~\cite[Exercise~2.1]{CLSW}.
\end{proof}
%
%
\begin{Definition}[normal cone and subdifferential in convex case in \mbox{\cite[Sec.~8.C]{Rockaf}}]\label{def:sub}
\mbox{}\\
Let $C \subseteq \R^n$ be a given closed convex set and $f:C \to \mathbb R \cup \{+\infty\}$ be a lower semicontinuous, convex function. Then $v\in \R^n$ is \emph{normal} to $C$ at $x\in C$ if 
\[
\ang{v,y-x} \leq 0\,\,\,\ \forall y\in C.
\]
The set of such vectors is the \emph{normal cone} to $C$ at $x$, denoted by $N_C(x)$. 

{We say that } $v \in \R^n$ is a \emph{subgradient} of $f$ at $x \in \mathrm{dom}(f)$ 
if $(v , -1)$ is an element of the normal cone $N_{\mathrm{epi}(f)}(x, f(x))$.
The possibly empty set of all subgradients of $f$ at $x$, denoted by $\partial f(x)$, is called the \emph{(Moreau-Rockafellar) subdifferential} of $f$ at $x$. 
\end{Definition}
\begin{Definition}[(Painlev\'{e}-Kuratowski) convergence of sets in \mbox{\cite[Sec.~4.A--4.B]{Rockaf}}]\label{def:setconvg}
For a sequence $\bb{A^i}_{i\in \N}$ of subsets of $\R^n$, the \emph{outer limit} is the set
\begin{equation*}
\limsup_{i\rightarrow \infty} A^i=\bb{x\colon \limsup_{i\rightarrow \infty}\di(x,A^i)=0},
\end{equation*}
and the \emph{inner limit} is the set
\begin{equation*}
\liminf_{i\rightarrow \infty} A^i=\bb{x\colon \liminf_{i\rightarrow \infty}\di(x,A^i)=0},
\end{equation*}
The \emph{limit} of the sequence exists if the outer and inner limit sets are equal:
\begin{equation*}
\lim_{i\rightarrow \infty} A^i:=\liminf_{i\rightarrow \infty} A^i=\limsup_{i\rightarrow \infty} A^i.
\end{equation*}
\end{Definition}

We also need two more convergence terms for set-valued maps and functions.

\begin{Definition}[graphical and epi-graphical convergence] \label{def:graph_epi_graph_lim}
Consider $A \subset \R^n$ and the set-valued map $F: A \Rightarrow \R^n$. Then the \emph{graph} of $F$ is defined as
\begin{equation*}
\gph F:=\bb{(x,y) \in \R^n \times  \R^n \colon y\in F(x), \ x \in A}.
\end{equation*}
A sequence of functions $f^i: \R^n \to \R \cup \{\infty\}$, $i\in \N$, converges \emph{epi-graphically},
if the outer and the inner limit of their epigraphs $(\epi f^i)_{i \in \N}$ coincide. The \emph{epi-limit}
is the function for which its epigraph $\epi f$ coincides with the set limit of the epigraphs
in the sense of Painlev\'{e}-Kuratowski (see~\cite[Definition~7.1]{Rockaf}).

We say that the sequence of set-valued maps $(F^i)_{i \in \N}$ with 
$F^i: \R^n \Rightarrow \R^n$ \textit{converges graphically} to a set-valued map 
$F: \R^n \Rightarrow \R^n$ if and only if its graphs, i.e.~the sets $(\gph F^i)_{i \in \N}$, 
converge to $\gph F$ in the sense of Definition \ref{def:setconvg} 
(see~\cite[Definition~5.32]{Rockaf}).
\end{Definition}

We cite here Attouch's theorem in a reduced version which plays an important role for 
convergence results of discrete optimal controls and solutions.

\begin{Theorem}[see~\mbox{\cite[Theorem~12.35]{Rockaf}}] \label{theo:attouch}
   Let $(f^i)_i$ and $f$ be lower semicontinuous, convex, proper functions from $\R^n$ 
   to $\R \cup \{\infty\}$. \\
   Then the epi-convergence of $(f^i)_{i \in \N}$ to $f$ is equivalent to the graphical convergence
   of the subdifferential maps $(\partial f^i)_{i \in \N}$ to $\partial f$.
\end{Theorem}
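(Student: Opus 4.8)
The plan is to route the equivalence through the \emph{Moreau envelope} and its associated \emph{proximal (resolvent) mapping}, which convert the qualitative convergence of the functions and of the set-valued subdifferentials into pointwise convergence of single-valued, globally defined, nonexpansive maps. Fix a parameter $\lambda > 0$ and, for a proper lower semicontinuous convex $g$, set
\begin{equation*}
e_\lambda g(x) := \min_{y \in \R^n}\Big\{ g(y) + \frac{1}{2\lambda}\norm{y-x}^2 \Big\},
\end{equation*}
with $P_\lambda g(x)$ denoting the unique minimizer. The two facts that make the strategy work are the resolvent identity $P_\lambda g = (I + \lambda\,\partial g)^{-1}$ and the gradient formula $\nabla e_\lambda g(x) = \frac{1}{\lambda}\big(x - P_\lambda g(x)\big)$; together they show that the envelope $e_\lambda g$, its gradient, the proximal map $P_\lambda g$, and the subdifferential $\partial g$ all carry equivalent information, and that $\gph \partial g$ is merely an orthogonal reparametrization of $\gph P_\lambda g$.

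For the forward implication I would start from epi-convergence of $(f^i)$ to $f$. Since adding the fixed continuous coercive term $\frac{1}{2\lambda}\norm{\cdot - x}^2$ is stable under epi-limits and confines the minimizers to a compact set, epi-convergence yields convergence of the perturbed infima, i.e.~$e_\lambda f^i(x) \to e_\lambda f(x)$ for every $x$ and every $\lambda > 0$. The envelopes are finite convex functions, so this pointwise convergence upgrades to local uniform convergence and, because each $e_\lambda f$ is differentiable, to convergence of gradients $\nabla e_\lambda f^i \to \nabla e_\lambda f$. Feeding this into the gradient formula gives $P_\lambda f^i(x) \to P_\lambda f(x)$ pointwise, and pointwise convergence of the resolvents is exactly the graphical convergence of $(\partial f^i)$ to $\partial f$ in the sense of Definition~\ref{def:graph_epi_graph_lim}.

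For the converse I would run the same chain backwards. Graphical convergence of the maximal monotone operators $(\partial f^i)$ is equivalent, via the \emph{Minty parametrization} $(x,v) \mapsto x + \lambda v$ of $\gph \partial f$, to pointwise convergence of the resolvents $P_\lambda f^i \to P_\lambda f$; this is the step where convexity is indispensable, since it guarantees that each $\partial f^i$ is maximal monotone and hence that $P_\lambda f^i$ is single-valued and $1$-Lipschitz on all of $\R^n$. From the resolvents one recovers the envelopes through $e_\lambda f(x) = f\big(P_\lambda f(x)\big) + \frac{1}{2\lambda}\norm{x - P_\lambda f(x)}^2$, and convergence of the envelopes then delivers the epi-convergence of $(f^i)$ to $f$.

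The main obstacle I anticipate is the \emph{normalization} in this last step: a subdifferential determines its convex function only up to an additive constant, so graphical convergence of $(\partial f^i)$ cannot by itself pin down the additive level of the limit, and the ``reduced'' statement as written must be read together with an anchoring condition — the existence of pairs $(x_i, v_i) \in \gph \partial f^i$ converging to some $(\bar x, \bar v) \in \gph \partial f$ with $f^i(x_i) \to f(\bar x)$. Tracking this single normalization is what fixes the constant and promotes convergence of the envelopes (which are insensitive to the same additive constant) into genuine epi-convergence of the $f^i$; everything else reduces to the standard, if technical, calculus of Moreau envelopes and the continuity of resolvents of maximal monotone operators.
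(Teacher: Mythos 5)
The paper offers no proof of this statement: it is quoted as Attouch's theorem with a pointer to \cite[Theorem~12.35]{Rockaf}, and the proof given there is precisely the route you propose --- Moreau envelopes, the gradient formula $\nabla e_\lambda g = \frac{1}{\lambda}(I - P_\lambda g)$, the Minty parametrization of $\gph \partial g$, and the equivalence between graphical convergence of maximal monotone maps and pointwise convergence of their resolvents. So your argument is the canonical one rather than a genuinely different approach; granting the standard facts you invoke (epi-convergence of proper lsc convex functions is equivalent to pointwise convergence of their Moreau envelopes; pointwise convergence of finite differentiable convex functions upgrades to locally uniform convergence together with convergence of gradients), both directions of your outline are sound.

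Your closing reservation is not a side remark but a genuine defect of the ``reduced version'' as stated in the paper, and you are right to insist on it. Take $f^i \equiv i$ and $f \equiv 0$: all are proper, lower semicontinuous and convex, and $\partial f^i(x) = \partial f(x) = \{0\}$ for every $x$, so $\gph \partial f^i = \gph \partial f = \R^n \times \{0\}$ and graphical convergence of the subdifferentials holds trivially; yet $\epi f^i = \R^n \times [i,\infty)$ has empty set limit, so $(f^i)_{i \in \N}$ epi-converges to the improper function $+\infty$ and not to $f$. Hence graphical convergence of subdifferentials alone does \emph{not} imply epi-convergence, and the equivalence requires exactly the anchoring condition you name --- existence of $(x^i,v^i) \in \gph \partial f^i$ converging to some $(\bar x, \bar v) \in \gph \partial f$ with $f^i(x^i) \to f(\bar x)$ --- which is indeed part of the full statement in the cited reference. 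The omission is harmless for the paper itself, since Theorem~\ref{theo:normaconver} uses only the implication from epi-convergence to graphical convergence, which is valid as stated and which your sketch proves correctly.
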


Now we will recall some basic notations of control theory, see e.g.~\cite[Chap.~IV]{BCD} for more detail. Consider the following linear time-variant control dynamics in $ \mathbb{R}^n$
\begin{equation}\label{LCDyn}
\begin{cases}
\begin{array}{r@{\,}l@{\quad}l}
\dot y(t) & =A(t)y(t)+B(t)u(t)
  & \text{ for a.e. } t \in [t_0, \infty),  \\
 u(t) & \in U 
  & \text{ for a.e. } t \in [t_0, \infty), \\
y(t_0) & = y_0. &
\end{array}
\end{cases}
\end{equation}
The coefficients $ A(t), B(t) $ are $n\times n$ and $n\times m$ matrices respectively,  $y_0 \in \mathbb{R}^n$ is the initial value,
 $U\in \mathcal{C}(\R^m)$ is the set of control values. Under standard assumptions, the existence and uniqueness of \eqref{LCDyn} are guaranteed for any measurable function $u(\cdot)$ and any $y_0 \in \mathbb{R}^n$. Let $\mathcal{S}\subset \mathbb{R}^n$, a nonempty compact set, be the 
\emph{target} and 
$$
\mathcal{U}:=\bb{ u \colon [t_0,\infty) \rightarrow U \text{ measurable}},
$$
the set of \emph{admissible controls}
and $y(t,y_0,u)$ is the solution of \eqref{LCDyn}.

We define the \emph{minimum time starting from $y_0 \in \mathbb{R}^n$ to reach the target $\mathcal{S}$}  for some $u \in \mathcal{U}$ as
$$
t(y_0,u)=\min \,\bb{t\ge t_0:\ y(t,y_0,u)\in \mathcal{S}}\leq \infty.
$$
The \emph{minimum time function to reach $\mathcal{S}$ from $y_0$} is defined as
$$
T(y_0)=\inf_{u\in \mathcal{U}} \,\bb{t(y_0,u)},
$$
see e.g.~\cite[Sec.~IV.1]{BCD}.
We also define the 
\emph{reachable sets for fixed end time} $t> t_0$, \emph{up to time $t$} 
resp.~\emph{up to a finite time} as follows:
 \begin{align*}
    \mathcal{R}(t) & := \bb{y_0 \in \R^n: \textit{ there exists } u\in \mathcal{U},\,y(t,y_0,u)\in \mathcal{S}},\\
    \RSU(t) & := \bigcup_{\substack s \in [t_0,t]}\mathcal{R}(s)= \mbox{}  \bb{y_0 \in \R^n: \textit{ there exists } u\in \mathcal{U},\,y(s,y_0,u)\in \mathcal{S} \text{ for some } s\in [t_0,t]}, \\
    \mathcal{R} & := \bb{y_0 \in \R^n: \textit{ there exists some finite time $ t \geq  t_0$ with }y_0 \in \mathcal{R}(t) } = \bigcup_{t\in [t_0,\infty)} \mathcal{R}(t).
 \end{align*}
By definition 
 \begin{equation} \label{eq:reach_leq_sub_level_set}
    \RSU(t) = \bb{y_0\in \mathbb{R}^n \colon T(y_0)\le t}
 \end{equation}
is a sublevel set
of the minimum time function, while
for a given maximal time $ t_f > t_0$ and some $t \in  I :=  [t_0, t_f]$,
$\mathcal{R}(t)$ is the set of points \emph{reachable from the target in time} $ t $ 
by the \emph{time-reversed system} 
 \begin{align} \label{eq:time_rev_cp}
    \dot{y}(t) & =\bar A(t)y(t) + \bar B(t)u(t), \\
    y(t_0) & \in \mathcal{S}, \label{InCond}
 \end{align}
 where $\bar A(t):= -A(t_0+ t_f-t),\,\bar B(t):=-B(t_0+ t_f-t)$ for shortening notations.
 In other words, $\mathcal{R}(t)$ equals  the set of starting points from 
which the system can reach the target in time $ t $. 
Sometimes $\mathcal{R}(t)$ is called the \emph{backward reachable set} which is also
considered in~\cite{BBZ} for computing the minimum time function by solving
a Hamilton-Jacobi-Bellman equation.

The following standing hypotheses are assumed to be fulfilled in the sequel.
\begin{Assumption}\label{standassum}\mbox{}
\begin{enumerate}
\item[ (i) ] $ A(t),\,B(t) $ are $ n \times n $, $ n \times m $ real-valued matrices defining integrable functions on any compact interval of $[t_0,\infty) $. 
\item[ (ii) ] The control set $U\subset \mathbb{R}^m$ is  convex, compact and nonempty, i.e.~$U \in \mathcal{C}(\R^m)$.
\item[ (iii) ] The target set $\mathcal{S}\in \mathbb{R}^n$ is convex, compact and nonempty, i.e.~$\mathcal{S} \in \mathcal{C}(\R^n)$. \\
   Especially, the target set can be a singleton.
\item[ (iv) ] $\mathcal{R}(t)$ is \emph{strictly expanding} on the compact interval $[t_0, t_f]$, i.e.~$\mathcal{R}(t_1) \subset \inter \mathcal{R}(t_2)$ for all $t_0\le t_1<t_2\le  t_f$
\end{enumerate}
\end{Assumption}
\begin{Remark}
The reader can find sufficient conditions for Assumption \ref{standassum}(iv)  for  $\mathcal{S}=\bb{0}$ in \cite[Chap.~17]{HL}, \cite[Sec.~2.2--2.3]{LM}.  Under this assumption, it is obvious that
\label{rem:strict_expand}
\begin{equation*}
\begin{aligned}
 \mathcal{R}(t)= \RSU(t) .
\end{aligned}
\end{equation*}
\end{Remark}
Under our standard hypotheses, the control problem~\eqref{eq:time_rev_cp} can 
equivalently be replaced by
the following linear differential inclusion
\begin{equation}\label{InLCDyn}
\dot y(t)\in \bar A(t)y(t)+\bar B(t)U \ \ \text{ for a.e. }  t \in [t_0, \infty)
\end{equation}
with absolutely continuous solutions $y(\cdot)$ (see \cite[Appendix~A.4]{Tol}). 

We recall the notion of Aumann's integral \cite{Aum} of a 
set-valued mapping defined as follows.

\begin{Definition}
 Consider $t_f \in [t_0,\infty)$ and the set-valued map $F: [t_0,t_f] \Rightarrow \R^n$ with nonempty images.  
 With the help of the set of integrable selections  
 $$
   \mathcal{F}:= \bb{f \colon [t_0, t_f] \rightarrow \mathbb{R}^n \colon f \text{ is integrable over } [t_0, t_f]  \text{ and } f(t) \in F(t) \text{ for a.e. }  t  \in [t_0, t_f]}
 $$
 the \emph{Aumann's integral} of $F(\cdot)$ is defined as 
 \begin{equation*}
    \int_{t_0}^{ t_f} F(s)ds:=\bb{\int_{t_0}^{ t_f} f(s)ds \colon f \in \mathcal{F} }.
 \end{equation*}
\end{Definition} 

All the solutions of \eqref{InCond}--\eqref{InLCDyn} are represented as
\begin{equation*}
y(t)=\Phi(t,t_0)y_0+\int_{t_0}^{t} \Phi(t,s)\bar B(s)u(s)ds
\end{equation*}
for all $y_0 \in \mathcal{S},\, u\in \mathcal{U}$, and $t_0\le t<\infty$, where $\Phi(t,s)$ is the \emph{fundamental solution matrix} of the homogeneous system 
\begin{equation}\label{fundSol}
\dot y(t)=\bar A(t)y(t), 
\end{equation}
with  $\Phi(s,s)=I_n$, the $n\times n$ identity matrix. 
Using the Minkowski addition and the Aumann's integral, the reachable set can be described by means of Aumann's integral as follows
\begin{equation}\label{Rt}
\mathcal{R}(t)=\Phi(t,t_0)\mathcal{S}+\int_{t_0}^{t} \Phi(t,s)\bar B(s)Uds.
\end{equation}
For time-invariant systems, 
i.e.~$ \bar A(t) = \bar A $, we have $\Phi(t,t_0)=e^{\bar A(t-t_0)}$. 

For the linear control system \eqref{LCDyn} the reachable set at a fixed end time is
convex which allows to apply support functions or supporting points for its approximation.
Furthermore, the reachable sets change continuously with respect to the end time. 
The following theorem will summarize the needed properties.

\begin{Theorem}\label{theo:propertyR}
   Let the Assumptions \ref{standassum}(i)--(iii) be fulfilled and
   consider the linear control process \eqref{LCDyn} in $\mathbb{R}^n$. Then $\mathcal{R}(t)$ is convex, compact and nonempty. Moreover, $\mathcal{R}(t)$ varies continuously with $t_0\le t< \infty$.
\end{Theorem}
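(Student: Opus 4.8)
The plan is to build everything on the Aumann-integral representation~\eqref{Rt}, which exhibits $\mathcal{R}(t)$ as the Minkowski sum of the linear image $\Phi(t,t_0)\mathcal{S}$ of the target and the Aumann integral $\int_{t_0}^{t}\Phi(t,s)\bar B(s)U\,ds$ of the set-valued map $s\mapsto\Phi(t,s)\bar B(s)U$. First I would treat these two summands separately and then recombine them with Proposition~\ref{prop:Minkowski}, which guarantees that linear images and Minkowski sums of sets in $\mathcal{C}(\mathbb{R}^n)$ stay in $\mathcal{C}(\mathbb{R}^n)$. Thus the algebraic part reduces to showing that each summand is convex, compact and nonempty.

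For the algebraic properties: nonemptiness is immediate, since $\mathcal{S}$ and $U$ are nonempty and $s\mapsto\Phi(t,s)\bar B(s)U$ admits integrable selections, for instance $s\mapsto\Phi(t,s)\bar B(s)u_0$ with a fixed $u_0\in U$, so both summands are nonempty. Convexity of $\Phi(t,t_0)\mathcal{S}$ follows from convexity of $\mathcal{S}$ together with Proposition~\ref{prop:Minkowski}, while convexity of the Aumann integral is the classical theorem of Aumann~\cite{Aum} and is in any case elementary here because $U$ is convex. For compactness I would verify that the integrand is measurable with compact convex values and integrably bounded: on $[t_0,t]$ one has $\max_{s}\norm{\Phi(t,s)}<\infty$ since $s\mapsto\Phi(t,s)$ is continuous, $\norm{\bar B(s)}$ is integrable by Assumption~\ref{standassum}(i), and $\max_{u\in U}\norm{u}<\infty$ by~(ii); hence $\norm{\Phi(t,s)\bar B(s)U}\le\norm{\Phi(t,s)}\,\norm{\bar B(s)}\max_{u\in U}\norm{u}$ is an integrable majorant, and the standard theory of set-valued integration yields a compact integral. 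Proposition~\ref{prop:Minkowski} then closes the argument and gives $\mathcal{R}(t)\in\mathcal{C}(\mathbb{R}^n)$.

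For the continuity in $t$ I would exploit the cocycle identity $\Phi(t',s)=\Phi(t',t)\Phi(t,s)$ together with the fact that a constant matrix factors out of both the Minkowski sum and the Aumann integral. For $t_0\le t\le t'$ this rewrites~\eqref{Rt} as $\mathcal{R}(t')=\Phi(t',t)\mathcal{R}(t)+\int_{t}^{t'}\Phi(t',s)\bar B(s)U\,ds$. Using subadditivity of the Hausdorff distance under Minkowski sums and the elementary estimate $\dH(MA,NA)\le\norm{M-N}\max_{x\in A}\norm{x}$, I obtain $\dH(\mathcal{R}(t'),\mathcal{R}(t))\le\norm{\Phi(t',t)-I_n}\max_{x\in\mathcal{R}(t)}\norm{x}+\big(\int_{t}^{t'}\norm{\Phi(t',s)}\,\norm{\bar B(s)}\,ds\big)\max_{u\in U}\norm{u}$. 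The first term vanishes as $t'\to t$ because $\Phi(\cdot,t)$ is (absolutely) continuous with $\Phi(t,t)=I_n$, and the second vanishes by absolute continuity of the Lebesgue integral applied to the integrable majorant. The case $t'<t$ is symmetric, exchanging the roles of $t$ and $t'$, which establishes continuity at every $t$.

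The main obstacle I anticipate is this continuity step, specifically making the transition-matrix estimates rigorous under the weak hypothesis that $A(\cdot)$ is only integrable rather than continuous: one must invoke Carath\'eodory theory to know that $\Phi(t,s)$ exists, is absolutely continuous in each argument and uniformly bounded on the compact square $[t_0,t_f]^2$, and then ensure the majorant is genuinely integrable so that the absolute-continuity argument applies. An alternative route runs through support functions, using $\delta^*(l,\int F\,ds)=\int\delta^*(l,F)\,ds$ and the identity $\dH(C,D)=\max_{\norm{l}=1}\lvert\delta^*(l,C)-\delta^*(l,D)\rvert$; there the delicate point is instead to control the $t$-dependence of $\delta^*(\bar B(s)^{T}\Phi(t,s)^{T}l,U)$ \emph{uniformly} in $\norm{l}=1$, which is why I prefer the cocycle formulation above.
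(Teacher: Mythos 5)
Your proof is correct, and its first half follows the same route as the paper: both start from the representation \eqref{Rt}, invoke Aumann's theorem \cite{Aum} for the convexity and compactness of the integral term $\int_{t_0}^{t}\Phi(t,s)\bar B(s)U\,ds$, and then conclude for $\mathcal{R}(t)$ itself via Proposition~\ref{prop:Minkowski} and Assumptions~\ref{standassum}(ii)--(iii); your extra verifications (nonemptiness through a constant selection $u_0\in U$, the integrable majorant $\norm{\Phi(t,s)}\,\norm{\bar B(s)}\max_{u\in U}\norm{u}$) merely make explicit what the paper leaves implicit. The genuine difference lies in the continuity claim: the paper does not prove it at all but delegates it to \cite[Sec.~2.2, Theorem~1]{LM}, whereas you give a self-contained, quantitative argument based on the cocycle identity $\Phi(t',s)=\Phi(t',t)\Phi(t,s)$, the induced decomposition $\mathcal{R}(t')=\Phi(t',t)\mathcal{R}(t)+\int_{t}^{t'}\Phi(t',s)\bar B(s)U\,ds$ for $t\le t'$, and the two elementary Hausdorff estimates for Minkowski sums and for linear images. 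This buys an explicit modulus of continuity, namely $\norm{\Phi(t',t)-I_n}\max_{x\in\mathcal{R}(t)}\norm{x}+(\int_{t}^{t'}\norm{\Phi(t',s)}\,\norm{\bar B(s)}\,ds)\max_{u\in U}\norm{u}$, valid under the mere local integrability of Assumption~\ref{standassum}(i), and makes the paper independent of the Lee--Markus reference; the citation buys brevity. Two small points you should still write out to make the symmetric case $t'<t$ airtight: the factor $\max_{x\in\mathcal{R}(t')}\norm{x}$ and the norms $\norm{\Phi(t,s)}$ must be bounded uniformly for $t',s$ in a compact neighborhood of $t$, both of which follow from a standard Gronwall estimate using the integrability of $\bar A(\cdot)$ and $\bar B(\cdot)$; neither is an obstacle.
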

\begin{proof}
Recall that for $t\ge t_0$
 \begin{align} \label{eq:sublevel_set} 
      \mathcal{R}(t) =\Phi(t,t_0)\mathcal{S}+\int_{t_0}^{t} \Phi(t,s)\bar B(s)Uds.
 \end{align}
   Observe that the integral term $\int_{t_0}^{t} \Phi(t,s)\bar B(s)Uds$ is actually the reachable set at time $t\ge t_0$ initiating from the origin, it is compact and convex due to the convexity of Aumann's integral, see e.g.~\cite{Aum}. The same properties hold for the
   reachable set $\mathcal{R}(t)$ due to Proposition~\ref{prop:Minkowski} (see also~\cite[Sec.~2.2, Theorem~1]{LM})
    and the assumptions on $\mathcal{S}$. 
      This reference also states the continuity of the set-valued map
      $t \mapsto \mathcal{R}(t)$.
    The proof is completed.
\end{proof}

\begin{Lemma}\label{lem:RSU_contin}
   Under the same assumptions as in Theorem~\ref{theo:propertyR}, the map
   $t \mapsto \RSU(t)$ has nonempty compact images and varies continuously with respect
   to $t \in I  = [t_0,t_f]$.
\end{Lemma}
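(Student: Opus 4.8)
The plan is to exploit the representation $\RSU(t)=\bigcup_{s\in[t_0,t]}\mathcal{R}(s)$ together with the properties of $\mathcal{R}(\cdot)$ already established in Theorem~\ref{theo:propertyR}: each $\mathcal{R}(s)$ is nonempty, convex and compact, and the map $s\mapsto\mathcal{R}(s)$ is continuous (in the Hausdorff metric) on the compact interval $I=[t_0,t_f]$. Nonemptiness of $\RSU(t)$ is immediate, since $\mathcal{R}(t_0)=\Phi(t_0,t_0)\mathcal{S}=\mathcal{S}$ is nonempty and $\mathcal{R}(t_0)\subseteq\RSU(t)$ for every $t\ge t_0$. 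I would also record two structural facts used throughout: $\RSU(\cdot)$ is monotone nondecreasing with respect to inclusion, i.e.\ $\RSU(t_1)\subseteq\RSU(t_2)$ whenever $t_1\le t_2$; and, because a continuous compact-valued map on a compact interval has uniformly bounded images and is in fact uniformly continuous, for every $\varepsilon>0$ there is $\delta>0$ with $\dH(\mathcal{R}(s),\mathcal{R}(s'))\le\varepsilon$ whenever $s,s'\in I$ and $|s-s'|\le\delta$.

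For compactness I would argue via a graph-and-projection argument rather than handling the union directly. Fix $t\in I$ and set $\Gamma_t:=\{(s,y)\colon s\in[t_0,t],\,y\in\mathcal{R}(s)\}\subset\R^{n+1}$. Boundedness of $\Gamma_t$ follows from the uniform bound on the images $\mathcal{R}(s)$, $s\in[t_0,t]$. Closedness follows from the continuity of $s\mapsto\mathcal{R}(s)$: if $(s_k,y_k)\to(s^*,y^*)$ with $y_k\in\mathcal{R}(s_k)$, then $\di(y_k,\mathcal{R}(s^*))\le\dH(\mathcal{R}(s_k),\mathcal{R}(s^*))\to0$, and since $\mathcal{R}(s^*)$ is closed and $y_k\to y^*$ we get $y^*\in\mathcal{R}(s^*)$. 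Hence $\Gamma_t$ is compact, and $\RSU(t)$, being its image under the (continuous) projection $(s,y)\mapsto y$, is compact as well.

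For the continuity of $t\mapsto\RSU(t)$ I would use monotonicity to reduce each one-sided Hausdorff excess to the uniform continuity of $\mathcal{R}(\cdot)$. Fix $t\in I$ and let $t'\in I$. If $t'>t$ then $\RSU(t)\subseteq\RSU(t')$, so only the excess of $\RSU(t')$ over $\RSU(t)$ is nonzero; any $y\in\RSU(t')$ lies in some $\mathcal{R}(s)$ with $s\in[t_0,t']$, and if $s\le t$ then $y\in\RSU(t)$, while if $s\in(t,t']$ then $\di(y,\RSU(t))\le\di(y,\mathcal{R}(t))\le\dH(\mathcal{R}(s),\mathcal{R}(t))$. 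The symmetric argument for $t'<t$ uses $\RSU(t')\subseteq\RSU(t)$ and bounds $\di(y,\RSU(t'))\le\dH(\mathcal{R}(s),\mathcal{R}(t'))$ for $y\in\mathcal{R}(s)$, $s\in(t',t]$. In both cases $|s-t|\le|t'-t|$ resp.\ $|s-t'|\le|t'-t|$, so invoking uniform continuity gives $\dH(\RSU(t'),\RSU(t))\le\sup\{\dH(\mathcal{R}(s),\mathcal{R}(s'))\colon s,s'\in I,\,|s-s'|\le|t'-t|\}\to0$ as $t'\to t$.

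The main obstacle I anticipate is the compactness claim, specifically the closedness of the union $\bigcup_{s\in[t_0,t]}\mathcal{R}(s)$; handling it directly (extracting convergent subsequences of the parameters $s_k$ and using Hausdorff continuity to pass to the limit) is exactly what the graph-projection argument packages cleanly, and it is the place where the continuity of $s\mapsto\mathcal{R}(s)$ from Theorem~\ref{theo:propertyR} is genuinely needed. The continuity part is then routine once uniform continuity is in hand, the only subtlety being the correct bookkeeping of the one-sided excesses, which the monotonicity of $\RSU(\cdot)$ makes transparent.
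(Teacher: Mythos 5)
Your proof is correct, and it differs from the paper's in one substantive place. The continuity part is essentially the paper's argument: there too, a point $x\in\RSU(s)$ is assigned a time $\widetilde{s}$ with $x\in\mathcal{R}(\widetilde{s})$, the cases $\widetilde{s}\le t$ and $\widetilde{s}>t$ are distinguished, and in the second case the excess is bounded by $\di(x,\mathcal{R}(t))\le\dH(\mathcal{R}(\widetilde{s}),\mathcal{R}(t))$, which is controlled by the continuity of $\mathcal{R}(\cdot)$ from Theorem~\ref{theo:propertyR}; your appeal to uniform continuity and to the monotonicity $\RSU(t_1)\subseteq\RSU(t_2)$ only streamlines the bookkeeping of the two one-sided excesses. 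Where you genuinely deviate is compactness. The paper does not argue through the union at all: it invokes the Filippov--Gronwall theorem of \cite{FraRam}, using the integrable linear growth condition from Assumptions~\ref{standassum}(i) to get compactness of (the closure of) the solution set in the maximum norm on $I$, from which compactness of $\RSU(t)$ follows. You instead derive compactness purely from what Theorem~\ref{theo:propertyR} already provides, by showing the graph $\Gamma_t=\{(s,y)\colon s\in[t_0,t],\ y\in\mathcal{R}(s)\}$ is bounded (uniform boundedness of the images) and closed (Hausdorff continuity plus closedness of each $\mathcal{R}(s)$), hence compact, and projecting. Your route is more elementary and self-contained within the lemma's stated hypotheses, and it makes explicit exactly which properties of $\mathcal{R}(\cdot)$ are used; the paper's route buys generality, since compactness of the solution set via Filippov--Gronwall does not presuppose Hausdorff continuity of $t\mapsto\mathcal{R}(t)$ and survives in settings (e.g.\ nonlinear dynamics) where that continuity is not available beforehand.
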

\begin{proof}
   The integrable linear growth condition holds due to Assumptions~\ref{standassum}(i)
   so that the Filippov-Gronwall theorem in~\cite[Theorem~2.3]{FraRam} 
   applies yielding the compactness of the closure of the set of solutions in the 
   maximum norm on $I$. As a consequence the compactness of $\RSU(t)$ follows easily.

   Let $s,t \in [t_0,t_f]$ and consider $x \in \RSU(s)$. Then, there exists $\widetilde{s}
   \in [t_0,s]$ with $x \in \mathcal{R}(\widetilde{s})$. We distinguish two cases.
   \begin{enumerate}
      \item[case (i):] $\widetilde{s} \leq t$
    \begin{align*}
       \di(x, \RSU(t)) & \leq \di(x, \mathcal{R}(\widetilde{s})) = 0
    \end{align*}
      \item[case (ii):] $\widetilde{s} > t$
    \begin{align*}
       \di(x, \RSU(t)) & \leq \di(x, \mathcal{R}(t)) \leq \sup_{x \in \mathcal{R}(\widetilde{s})} \di(x, \mathcal{R}(t))
         \leq \dH(\mathcal{R}(\widetilde{s}), \mathcal{R}(t))
    \end{align*}
   For every $\varepsilon > 0$ there exists $\delta > 0$ such that 
   for all $s \in [t-\delta, t+\delta] \cap [t_0,t_f]$ we have
    \begin{align*}
       \dH(\mathcal{R}(s), \mathcal{R}(t)) \leq \varepsilon
    \end{align*}
   which also holds for $\widetilde{s}$ instead of $s$, since $0 \leq \widetilde{s} - t \leq s - t$.
   \end{enumerate}
   This proves the continuity of $\RSU(\cdot)$.
\end{proof}

The following proposition is to provide the connection between $\mathcal{R}(t)$ and the level set of $T(\cdot)$ at time $t$ which is essential for this approach. 
We will benefit from
the sublevel representation in~\eqref{eq:reach_leq_sub_level_set}.
 The result is related to~\cite[Theorem~2.3]{BBZ}, where the minimum time function at $x$ is 
the minimum for which $x$ lies on a zero-level set bounding the backward reachable set.


\begin{Proposition}
Let Assumption \ref{standassum} be fulfilled and $t > t_0$. Then 
\label{prop:bd_descr_monotone_case_w_level_set}
\begin{equation}\label{ReachLevel}
\partial \mathcal{R}(t)=\bb{y_0  \in \mathbb{R}^n \colon T(y_0)=t}.
\end{equation}
\end{Proposition}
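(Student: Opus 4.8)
The plan is to reduce everything to the sublevel-set representation of $\mathcal{R}(t)$ and then establish the two inclusions in \eqref{ReachLevel} separately. By Remark~\ref{rem:strict_expand} together with \eqref{eq:reach_leq_sub_level_set} we have, for every $t \in (t_0,t_f]$,
\[
   \mathcal{R}(t) = \RSU(t) = \bb{y_0 \in \R^n \colon T(y_0) \le t},
\]
so that $y_0 \in \mathcal{R}(s)$ is equivalent to $T(y_0) \le s$. The ingredients I would draw on are the convexity, compactness and nonemptiness of $\mathcal{R}(t)$ and the continuity of $t \mapsto \mathcal{R}(t)$ in the Hausdorff distance (both from Theorem~\ref{theo:propertyR}), and the strict expansion in Assumption~\ref{standassum}(iv). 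A preliminary observation that makes the notion of boundary meaningful: strict expansion forces $\inter \mathcal{R}(t) \neq \emptyset$ for every $t \in (t_0,t_f]$, because $\mathcal{R}(t_0) = \mathcal{S}$ is nonempty and $\mathcal{R}(t_0) \subset \inter \mathcal{R}(t)$.

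For the inclusion $\partial \mathcal{R}(t) \subseteq \bb{y_0 \colon T(y_0) = t}$ I would argue as follows. Take $y_0 \in \partial \mathcal{R}(t) \subseteq \mathcal{R}(t)$, hence $T(y_0) \le t$. If $T(y_0) = t' < t$ with $t' \ge t_0$, then $y_0 \in \RSU(t') = \mathcal{R}(t')$, and strict expansion applied with $t_1 = t' < t_2 = t$ gives $\mathcal{R}(t') \subset \inter \mathcal{R}(t)$, so $y_0 \in \inter \mathcal{R}(t)$, contradicting $y_0 \in \partial \mathcal{R}(t)$ since interior and boundary are disjoint. Thus $T(y_0) = t$. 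This direction is routine and uses only the monotonicity coming from the strictness of the expansion.

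The reverse inclusion $\bb{y_0 \colon T(y_0) = t} \subseteq \partial \mathcal{R}(t)$ is the main obstacle, and I would prove it in the equivalent covering form $\inter \mathcal{R}(t) \subseteq \bigcup_{s < t} \mathcal{R}(s)$: indeed, if $T(y_0) = t$ then $y_0 \notin \mathcal{R}(s)$ for all $s < t$, so this inclusion forces $y_0 \notin \inter \mathcal{R}(t)$, and since $y_0 \in \mathcal{R}(t)$ we conclude $y_0 \in \partial \mathcal{R}(t)$. To prove the covering claim, fix $y_0 \in \inter \mathcal{R}(t)$ and an auxiliary interior point $c \in \inter \mathcal{R}(s_0)$ with $t_0 < s_0 < t$, together with $\rho > 0$ such that $B_\rho(c) \subseteq \mathcal{R}(s_0) \subseteq \mathcal{R}(s)$ for all $s \in [s_0,t]$. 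Interiority of $y_0$ lets me pick a point $z = y_0 + \eta(y_0 - c) \in \mathcal{R}(t)$ slightly beyond $y_0$ on the ray from $c$, so that $y_0 = (1-\mu)c + \mu z$ with $\mu = (1+\eta)^{-1} \in (0,1)$. Continuity of $s \mapsto \mathcal{R}(s)$ provides $z_s \in \mathcal{R}(s)$ with $\di(z,\mathcal{R}(s)) \le \dH(\mathcal{R}(t),\mathcal{R}(s)) \to 0$ as $s \to t^-$, hence $z_s \to z$. Convexity of $\mathcal{R}(s)$, in the form $(1-\mu)\mathcal{R}(s) + \mu \mathcal{R}(s) = \mathcal{R}(s)$, then yields
\[
   B_{(1-\mu)\rho}\big((1-\mu)c + \mu z_s\big) = (1-\mu)B_\rho(c) + \mu z_s \subseteq \mathcal{R}(s).
\]
Since $(1-\mu)c + \mu z_s \to y_0$, for $s$ close enough to $t$ this centre lies within $(1-\mu)\rho$ of $y_0$, whence $y_0 \in \mathcal{R}(s)$, i.e.~$T(y_0) \le s < t$, completing the covering claim.

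The delicate points I expect to spend care on are the nonemptiness of $\inter \mathcal{R}(t)$ (settled above from strict expansion) and, above all, converting mere Hausdorff closeness of $\mathcal{R}(s)$ to $\mathcal{R}(t)$ into genuine membership $y_0 \in \mathcal{R}(s)$. A naive limiting argument only produces points \emph{near} $y_0$ in $\mathcal{R}(s)$; it is precisely the interior point $c$ with its surrounding ball, combined with the convex-combination shrinking, that upgrades proximity to containment and overcomes this difficulty.
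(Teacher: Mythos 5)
Your proof is correct, and your first inclusion coincides with the paper's argument: both deduce $T(y_0)\le t$ from the sublevel representation $\mathcal{R}(t)=\RSU(t)=\bb{T\le t}$ and then use strict expansion to rule out $T(y_0)<t$. For the reverse inclusion you follow the same contradiction setup (assume $T(x)=t$ but $x\in\inter\mathcal{R}(t)$, then produce $s<t$ with $x\in\mathcal{R}(s)$), yet the decisive step is genuinely different. The paper chooses $\varepsilon>0$ with $x+\varepsilon B_1(0)\subset\mathcal{R}(t)$, uses Hausdorff continuity to get $x+\varepsilon B_1(0)\subset\mathcal{R}(t_1)+\frac{\varepsilon}{2}B_1(0)$ for $t_1<t$ close to $t$, and then invokes the order cancellation law for convex compact sets (Pallaschke--Urba\'{n}ski, Theorem~3.2.1) to cancel the $\frac{\varepsilon}{2}B_1(0)$ term, concluding $x+\frac{\varepsilon}{2}B_1(0)\subset\mathcal{R}(t_1)$ and hence $T(x)\le t_1<t$. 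You instead upgrade Hausdorff closeness to genuine membership by an elementary convexity argument: a ball $B_\rho(c)\subset\mathcal{R}(s_0)$ around an interior point of an earlier reachable set (whose existence you correctly trace back to strict expansion and the nonempty target), a point $z$ slightly beyond $y_0$ on the ray from $c$, nearest points $z_s\to z$ in $\mathcal{R}(s)$, and the convex combination $(1-\mu)B_\rho(c)+\mu z_s\subset\mathcal{R}(s)$, whose center converges to $y_0$ so that eventually $y_0\in\mathcal{R}(s)$. Both routes consume the same ingredients (convexity, compactness, Hausdorff continuity, strict expansion); the paper's version is shorter but rests on a cited cancellation theorem, whereas yours is self-contained and can be read as a hands-on proof of exactly the consequence of that cancellation law which the paper needs. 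A minor point worth a sentence in a write-up: your ray construction tacitly assumes $y_0\ne c$, but the case $y_0=c$ is trivial since then $y_0\in\mathcal{R}(s_0)$ with $s_0<t$ already.
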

\begin{proof}
"$\subset$":
Assume that there exists $x\in \partial \mathcal{R}(t)$ with $x\notin \bb{y_0\in \mathbb{R}^n \colon T(y_0)=t}$. 
Clearly, $x \in \RSU(t)$ and~\eqref{eq:reach_leq_sub_level_set} shows that $T(x) \leq t$.
By definition there exists
$s \in [t_0,t]$ with $x \in \mathcal{R}(s)$. Assuming $s <t$ we get the
contradiction $x \in \mathcal{R}(s) \subset \inter \mathcal{R}(t)$ from Assumption~\ref{standassum}(iv).

"$\supset$":
Assume that there exists $x\in \bb{y_0\in \mathbb{R}^n \colon T(y_0)=t}$ (i.e.~$T(x)=t$) be such that $x\notin \partial \mathcal{R}(t)$. Since 
$x\in \mathcal{R}(t)$ by~\eqref{eq:reach_leq_sub_level_set} and we assume that $x\notin \partial \mathcal{R}(t)$, then $x\in \inter( \mathcal{R}(t))$. 

Hence, there exists $\varepsilon > 0$ with
    \begin{align*}
       x + \varepsilon B_1(0) \subset \mathcal{R}(t).
    \end{align*}
   The continuity of $\mathcal{R}(\cdot)$ ensures for $t_1 \in [t - \delta, t+\delta] 
   \cap I$ that
    \begin{align*}
       \dH(\mathcal{R}(t), \mathcal{R}(t_1)) & \leq \frac{\varepsilon}{2}.
    \end{align*}
   Hence,
    \begin{align*}
       x + \varepsilon B_1(0) \subset \mathcal{R}(t) & \subset \mathcal{R}(t_1)
         + \frac{\varepsilon}{2} B_1(0).
    \end{align*}
   The order cancellation law in~\cite[Theorem~3.2.1]{PalUrb}
   can be applied, since $\mathcal{R}(t_1)$ is convex and all sets are compact. Therefore,
    \begin{align*}
       (x + \frac{\varepsilon}{2} B_1(0)) + \frac{\varepsilon}{2} B_1(0) 
         & \subset \mathcal{R}(t_1) + \frac{\varepsilon}{2} B_1(0) \\
       \Rightarrow\ x + \frac{\varepsilon}{2} B_1(0)
         & \subset \mathcal{R}(t_1)
    \end{align*}
   Hence, $x \in \inter( \mathcal{R}(t_1))$ with $t_1 < t$ so that $T(x) \leq t_1 < t$
   which is again a contradiction. Therefore, $\bb{y_0\in \mathbb{R}^n \colon T(y_0)=t} \subset \partial \mathcal{R}(t)$. 
   The proof is completed.
\end{proof}


In the previous characterization of the boundary of the reachable set at fixed end time
the assumption of monotonicity of the reachable sets played a crucial role. 
As stated in Remark~\ref{rem:strict_expand}, Assumption \ref{standassum}(iv) also guarantees that the union of reachable sets 
coincides with the reachable set at the largest end time and is trivially convex.
If we drop this assumption, 
we can only characterize the boundary of the \emph{union} of reachable sets up to a time
under relaxing the expanding property~(iv) while demanding convexity as can be seen in the following proposition.

\begin{Proposition}
Let $t > t_0$, Assumptions \ref{standassum}(i)--(iii) and Assumption
 \begin{quote}
    (iv)' \quad $\RSU(t)$ has convex images and is strictly expanding on the compact interval $[t_0, t_f]$, i.e.
     \begin{align*}
        \RSU(t_1) \subset \inter \RSU(t_2) \quad \text{for all $t_0\le t_1<t_2\le  t_f$}.
     \end{align*}
 \end{quote}
%
Then
\label{prop:bd_descr_w_level_set}
\begin{equation} \label{equ:bd_union_reach_sets}
  \partial \RSU(t)
  = \bb{x\in \R^n \colon T(x)=t}
\end{equation}
\end{Proposition}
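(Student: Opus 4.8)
The plan is to follow the proof of Proposition~\ref{prop:bd_descr_monotone_case_w_level_set} almost verbatim, replacing the fixed-end-time reachable set $\mathcal{R}(t)$ everywhere by the union $\RSU(t)$ of reachable sets up to time $t$. The three ingredients I would rely on are the sublevel representation~\eqref{eq:reach_leq_sub_level_set}, the relaxed strict expansion hypothesis~(iv)$'$, and the continuity together with compactness of $\RSU(\cdot)$ furnished by Lemma~\ref{lem:RSU_contin}. I would treat the two inclusions separately.

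For "$\subset$", I would take $x \in \partial \RSU(t)$. Since $\RSU(t)$ is compact, it is closed and contains its boundary, so $x \in \RSU(t)$ and~\eqref{eq:reach_leq_sub_level_set} gives $T(x) \le t$. Were $T(x) < t$, I would set $s := T(x)$, which lies in $[t_0, t)$ because $T(x) \ge t_0$ always holds; then~\eqref{eq:reach_leq_sub_level_set} yields $x \in \RSU(s)$, and hypothesis~(iv)$'$ forces $x \in \inter \RSU(t)$, contradicting $x \in \partial \RSU(t)$. Hence $T(x) = t$.

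For "$\supset$", I would take $x$ with $T(x) = t$, so $x \in \RSU(t)$ by~\eqref{eq:reach_leq_sub_level_set}, and argue by contradiction: if $x \notin \partial \RSU(t)$ then $x \in \inter \RSU(t)$, so some ball satisfies $x + \varepsilon B_1(0) \subset \RSU(t)$. Continuity of $\RSU(\cdot)$ then supplies $t_1 < t$ in $I$ with $\dH(\RSU(t), \RSU(t_1)) \le \frac{\varepsilon}{2}$, whence
\[
  x + \varepsilon B_1(0) \subset \RSU(t) \subset \RSU(t_1) + \frac{\varepsilon}{2} B_1(0).
\]
Splitting $\varepsilon B_1(0) = \frac{\varepsilon}{2} B_1(0) + \frac{\varepsilon}{2} B_1(0)$ and applying the order cancellation law~\cite[Theorem~3.2.1]{PalUrb}, I would cancel one copy of $\frac{\varepsilon}{2} B_1(0)$ to obtain $x + \frac{\varepsilon}{2} B_1(0) \subset \RSU(t_1)$. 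Then $x \in \inter \RSU(t_1)$ gives $T(x) \le t_1 < t$ by~\eqref{eq:reach_leq_sub_level_set}, contradicting $T(x) = t$.

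The hard part is the cancellation step, and this is exactly where hypothesis~(iv)$'$ earns its keep: after dropping the monotonicity assumption~(iv), the individual sets $\mathcal{R}(t_1)$ need no longer be convex, but~(iv)$'$ guarantees convexity of the unions $\RSU(t_1)$, and convexity of $\RSU(t_1)$ together with compactness of all sets involved (Lemma~\ref{lem:RSU_contin}) are precisely the hypotheses required by the order cancellation law. Everything else---the case distinction in "$\subset$" and the ball-fitting argument in "$\supset$"---is routine and structurally identical to the monotone case once $\RSU$ replaces $\mathcal{R}$.
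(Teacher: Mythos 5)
Your proof is correct and takes essentially the same approach as the paper: the paper defers the argument to \cite[Proposition~7.1.4]{Le}, but that argument is precisely the proof of Proposition~\ref{prop:bd_descr_monotone_case_w_level_set} transplanted to $\RSU$, with the convexity demanded in (iv)$'$ and the compactness and continuity from Lemma~\ref{lem:RSU_contin} supplying the hypotheses of the order cancellation law, exactly as you arrange it. One minor correction to your closing commentary: dropping (iv) does not threaten the convexity of the fixed-time sets $\mathcal{R}(t_1)$, which are always convex by Theorem~\ref{theo:propertyR}; what can fail is the convexity of the union $\RSU(t_1)$, and that is exactly what (iv)$'$ postulates.
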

 \begin{proof}
  \emph{The proof can be found in~\cite[Proposition~7.1.4]{Le}.}
\end{proof}

\begin{Remark}
   Assumption~(iv)' implies that the considered system is small-time controllable, see~\cite[Chap.~IV, Definition 1.1]{BCD}. Moreover, under the assumption of small-time controllability the nonemptiness of the interior of $\mathcal{R}$ and the continuity of the minimum time
function in $\mathcal{R}$ are consequences, see~\cite[Chap.~IV, Propositions~1.2,~1.6]{BCD}.
Assumption~(iv)' is essentially weaker than~(iv), since the convexity of $\RSU(t)$
and the strict expandedness of $\RSU(\cdot)$ follows by Remark~\ref{rem:strict_expand}. 

In the previous proposition we can allow that $\RSU(t)$ is lower-dimensional and are 
still able to prove the inclusion "$\supset$" in~\eqref{equ:bd_union_reach_sets}, 
since the interior of $\RSU(t)$ would be empty 
and $x$ cannot lie in the interior which also creates the (wanted) contradiction.

For the other inclusion "$\subset$" the nonemptiness of the interior of $\mathcal{R}(t)$
in Proposition~\ref{prop:bd_descr_monotone_case_w_level_set}
resp.~the one of $\RSU(t)$ in Proposition~\ref{prop:bd_descr_w_level_set} is essential. Therefore, the expanding property
in~Assumptions~(iv) resp.~(iv)' cannot be relaxed by assuming only monotonicity in the sense
 \begin{align} \label{ex:relaxed_expand}
    \mathcal{R}(s) \subset \mathcal{R}(t) \quad\text{or}\quad
    \RSU(s) \subset \RSU(t)
 \end{align}
for $s < t$ as \cite[Example~2.6]{BLp2} shows.
\end{Remark}


\section{Approximation of the minimum time function}\label{sec:construction}
\subsection{Set-valued discretization methods}\label{subsec:sv_discr_meth}
Consider the linear control dynamics \eqref{LCDyn}. 

For a given $x\in \mathbb{R}^n$, the problem of computing approximately the minimum time $T(x)$ to reach $\mathcal{S}$ by following the dynamics $\eqref{LCDyn}$ is deeply investigated in literature. It was usually obtained  by solving the associated discrete Hamilton-Jacobi-Bellman equation (HJB), see, for instance, \cite{BF1,F,CL,GL}. Neglecting the space discretization we obtain an approximation of $T(x)$. In this paper, we will introduce another approach to treat this problem based on approximation of the reachable set of the corresponding linear differential inclusion. The approximate minimum time function is not derived from the PDE solver, but from iterative set-valued methods or direct discretization of control problems.

Our aim now is to compute $\mathcal{R}(t)$ numerically up to a maximal time $t_f$ based on the representation~\eqref{Rt} by means of set-valued methods to approximate  Aumann's integral. There are many approaches to achieving this goal. We will describe three known options for discretizing the reachable set which are used in the following. 

Consider for simplicity of notations an equidistant grid over the interval $I=[t_0,t_f]$ with $N$ subintervals, step size $h = \frac{t_f - t_0}{N}$ and grid points $t_i=t_0+ih$, $i=0,\ldots,N$.
\begin{enumerate}
\item[(I)] Set-valued quadrature methods with the exact knowledge of the fundamental solution matrix of \eqref{fundSol} (see e.g.~\cite{V_int,D2F,BL}, \cite[Sec.~2.2]{BPhD}): \\
  As in the pointwise case, we replace the integral $\int_{t_0}^{t} \Phi(t,s)\bar B(s)Uds$ by some quadrature scheme of order $p$ with non-negative weights.
   Therefore, \eqref{Rt} is approximated by
\begin{equation} \label{eq:sv_quad_meth_global}
\mathcal{R}_h(t_N)=\Phi(t_N,t_0)\mathcal{S}+h \sum_{i=0}^{N}c_i \Phi(t_N,t_i)\bar B(t_i)U
\end{equation}
with weights $c_i\ge 0,\,i=0,\ldots,N$. Moreover, the error estimate
\begin{equation*}
\dH(\int_{t_0}^{t_N} \Phi(t_N,s)\bar B(s)Uds,h \sum_{i=0}^{N}c_i \Phi(t_N,t_i)\bar B(t_i)U)\le Ch^p
\end{equation*}
holds. Obviously, the following recursive formula is valid  for $i=0,\ldots,N-1$
\begin{align}
\mathcal{R}_h(t_{i+1}) & =\Phi(t_{i+1},t_{i})\mathcal{R}_h(t_{i})+ h\sum_{j=0}^{1} \widetilde{c}_{ij} \Phi(t_{i+1},t_{i+j})\bar B(t_{i+j})U, \label{eq:sv_quad_meth_local_rec} \\
\mathcal{R}_h(t_0) & = \mathcal{S} \label{eq:sv_quad_meth_local_start}
\end{align}
with suitable weights $\widetilde{c}_{ij} \geq 0$
due to the semigroup property of the fundamental solution matrix, i.e.
$$\Phi(t + s,t_0)=\Phi(t+s,s)\Phi(s,t_0) \quad \text{ for all $t \in I$, $s \ge t_0$
  with $s+t \in I$}.
$$
For example, the set-valued trapezoidal rule uses the settings
 \begin{align*}
    c_0 & = c_N = \frac{1}{2}, \quad c_i = 1 \ \,(i=1,2, \ldots,N-1), \\
    \widetilde{c}_{ij} & = \frac{1}{2} \ \,(i=0,1, \ldots,N-1, \ j=0,1).
 \end{align*}
\item[(II)] Set-valued combination methods (see e.g.~\cite{BL}, \cite[Sec.~2.3]{BPhD}): \\
We replace $\Phi(t_N,t_i)$ in method $ (I) $ by its approximation (e.g.~via ODE solvers of the corresponding matrix equation) such that 
\begin{enumerate}
\item[a)]$\Phi_h(t_{m+n},t_0)=\Phi_h(t_{m+n},t_m)\Phi_h(t_m,t_0)$ for all $m \in \{0,\ldots,N\}$, $n \in \{0,\ldots,N-m\}$ \\
The use of e.g.~Euler's method or Heun's method yields
    \begin{align}
       \Phi_h(t_{i+1},t_i) & = I_n + h A(t_i), \label{eq:euler_ode} \\
       \Phi_h(t_{i+1},t_i) & = I_n + \frac{h}{2} (A(t_i) + A(t_{i+1})) 
         + \frac{h^2}{2} A(t_{i+1}) A(t_i), \label{eq:euler_heun}
    \end{align}
   respectively for $i=0,1,\ldots,N-1$.
\item[b)]$\sup_{0\le i\le N} \norm{\Phi(t_N,t_i)-\Phi_h(t_N,t_i)}\le Ch^p.$
\end{enumerate}
The following global resp.~local recursive approximation together with~\eqref{eq:sv_quad_meth_local_start} holds for the discrete reachable sets:
\begin{align}
\mathcal{R}_h(t_N) & =\Phi_h(t_N,t_0)\mathcal{S}+h\sum_{i=0}^{N}c_i \Phi_h(t_N,t_i)\bar B(t_i)U, \label{eq:sv_comb_meth_global} \\
\mathcal{R}_h(t_{i+1}) & =\Phi_h(t_{i+1},t_{i})\mathcal{R}_h(t_{i})+ h \sum_{j=0}^{1} \widetilde{c}_{ij} \Phi_h(t_{i+1},t_{i+j})\bar B(t_{i + j})U.
\label{semigroupcombmethRh}
\end{align}
\item[(III)] Set-valued Runge-Kutta methods (see e.g.~\cite{DF,W,V,B,BBCG}): \\
We can approximate \eqref{InLCDyn} by set-valued analogues of Runge-Kutta schemes.
The discrete reachable set is computed recursively with the starting condition in~\eqref{eq:sv_quad_meth_local_start}
for the set-valued Euler scheme (see e.g.~\cite{DF}) as
\begin{equation} \label{eq:sv_euler_rec}
\mathcal{R}_h(t_{i+1})=  \Phi_h(t_{i+1},t_i) \mathcal{R}_h(t_i)+hB(t_i)U
\end{equation}
with \eqref{eq:euler_ode} or with \eqref{eq:euler_heun} for the set-valued
Heun's scheme  with piecewise constant selections  (see e.g.~\cite{V}) as
\begin{equation} \label{eq:sv_heun_rec}
\mathcal{R}_h(t_{i+1})= \Phi_h(t_{i+1},t_i) \mathcal{R}_h(t_i)
+\frac{h}{2}\Big( (I + h A(t_{i+1}))B(t_i)+B(t_{i+1})\Big)U.
\end{equation}
For linear differential inclusions these methods can be regarded as perturbed
set-valued combination methods (see~\cite{B}).
\end{enumerate}
Further options are possible, for instance, methods based on Fliess expansion and Volterra series \cite{FER,GK,KK,LV,PV}. 

The purpose of this paper is not to focus on the set-valued numerical schemes themselves, but on the approximative construction of $T(\cdot)$. Thus we just choose the scheme described in (II) and (III) to present our idea from now on. In practice, there are several strategies in control problems to discretize the set of controls $\mathcal{U}$, see e.g.~\cite{BBCG}. Here  we choose a  \emph{piecewise constant} approximation $\mathcal{U}_h$ for the sake of simplicity which corresponds to use only one selection on the subinterval $[t_i,t_{i+1}]$ in the corresponding set-valued quadrature method. 
 This choice is obvious in the approaches \eqref{eq:sv_euler_rec}, \eqref{eq:sv_heun_rec} and e.g.~for set-valued Riemann sums in \eqref{eq:sv_quad_meth_global} or \eqref{eq:sv_comb_meth_global}. In the recursive formulas for the set-valued Riemann sum, this means that $\widetilde{c}_{i1} = 0$ for $i=0,1,\ldots,N-1$.
Recall that from (II) the discrete reachable set reads as follows.
\begin{align*}
\mathcal{R}_h(t_N)=\bb{y\in \mathbb{R}^n \colon \text{ there exists a piecewise constant control } u_h \in \mathcal{U}_h \text{ and } y_0 \in \mathcal{S}\\
\text{ such that } y=\Phi_h(t_N,t_0)y_0+h\sum_{i=0}^{N}c_i \Phi_h(t_N,t_i)\bar B(t_i)u_h(t_i)}
\end{align*}
or equivalently
\begin{equation*}
\mathcal{R}_h(t_N)=\Phi_h(t_N,t_0)\mathcal{S}+h\sum_{i=0}^{N}c_i \Phi_h(t_N,t_i)\bar B(t_i)U.
\end{equation*}
We set
\begin{equation*}
t_h(y_0,y,u_h) = \min \bb{ t_n \colon n\in \mathbb{N},\,\, y=\Phi_h(t_n,t_0)y_0+h\sum_{i=0}^{n}c_i \Phi_h(t_n,t_i)\bar B(t_i) u_h(t_i) }
\end{equation*}
for some  $y\in \mathbb{R}^n,\,y_0\in \mathcal{S}$ and a piecewise constant grid function $u_h$ with $u_h(t_i) = u_i \in U$, $i=0,\ldots,n$. If there does not exist such a grid control $u_h$ which reaches $y$ from $y_0$ by the corresponding discrete trajectory, $t_h(y_0,y,u_h)=\infty$. Then the discrete minimum time function $T_h(\cdot)$ is defined as
\begin{equation*}
T_h(y)=\min\limits_{\substack{ u_h\in \mathcal{U}_h\\[0.3ex] y_0\in \mathcal{S}} }\, t_h(y_0,y,u_h).
\end{equation*}

\begin{Proposition}
In all of the constructions (I)--(III) described above, $\mathcal{R}_h(t_N)$ is a convex, compact and nonempty set.
\end{Proposition}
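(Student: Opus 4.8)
The plan is to show that in each of the three constructions $\mathcal{R}_h(t_N)$ is built from the two data sets $\mathcal{S}$ and $U$ by applying only three operations: taking the image under a linear map, multiplying by a non-negative scalar, and forming a Minkowski sum. Since $\mathcal{S} \in \mathcal{C}(\mathbb{R}^n)$ and $U \in \mathcal{C}(\mathbb{R}^m)$ by Assumptions~\ref{standassum}(iii) and~(ii), and since Proposition~\ref{prop:Minkowski} states that $\mathcal{C}(\mathbb{R}^n)$ is closed under each of these three operations (the scalar-multiplication clause requiring $\lambda \geq 0$), the claim reduces to making the structural form of each $\mathcal{R}_h(t_N)$ explicit.

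For methods (I) and (II) I would argue directly from the closed-form expressions~\eqref{eq:sv_quad_meth_global} and~\eqref{eq:sv_comb_meth_global}. In each summand $h c_i \Phi(t_N,t_i)\bar B(t_i) U$ the set $U$ is first mapped by the matrix $\Phi(t_N,t_i)\bar B(t_i)$, which keeps it in $\mathcal{C}(\mathbb{R}^n)$, and then scaled by $h c_i \geq 0$, since $h > 0$ and the quadrature weights satisfy $c_i \geq 0$; the leading term $\Phi(t_N,t_0)\mathcal{S}$ is likewise a linear image of $\mathcal{S}$. The finite Minkowski sum of these sets then remains in $\mathcal{C}(\mathbb{R}^n)$ by repeated application of Proposition~\ref{prop:Minkowski}, and it is nonempty because $\mathcal{S}$ and $U$ are. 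The identical reasoning covers~(II), where only $\Phi$ is replaced by its approximation $\Phi_h$.

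For the Runge--Kutta schemes in (III) I would instead proceed by induction on the grid index, using the recursions~\eqref{eq:sv_euler_rec} and~\eqref{eq:sv_heun_rec} together with the starting value $\mathcal{R}_h(t_0) = \mathcal{S} \in \mathcal{C}(\mathbb{R}^n)$. Assuming $\mathcal{R}_h(t_i) \in \mathcal{C}(\mathbb{R}^n)$, the image $\Phi_h(t_{i+1},t_i)\mathcal{R}_h(t_i)$ is again in $\mathcal{C}(\mathbb{R}^n)$, while the increment term is a non-negatively scaled linear image of $U$ — explicitly $h B(t_i) U$ for Euler, and $\frac{h}{2}\bigl((I + h A(t_{i+1}))B(t_i) + B(t_{i+1})\bigr)U$ for Heun, where the bracket is a single fixed matrix applied to $U$. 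Their Minkowski sum therefore lies in $\mathcal{C}(\mathbb{R}^n)$, which closes the induction.

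There is no genuinely hard step here; the proposition is a bookkeeping consequence of the closure properties already recorded in Proposition~\ref{prop:Minkowski}. The only points requiring a moment's care are verifying that every scaling factor is non-negative, so that the scalar-multiplication clause of Proposition~\ref{prop:Minkowski} applies, and setting up the induction correctly for the recursive schemes in (III), where convexity and compactness must be propagated from one time step to the next rather than read off a single closed formula.
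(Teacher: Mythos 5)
Your proof is correct and follows essentially the same route as the paper's: the paper's own (sketched) proof invokes exactly the linearity of the system, the convexity and compactness of $\mathcal{S}$ and $U$, and the closure properties in Proposition~\ref{prop:Minkowski}, deferring details to a cited reference. You have simply written out those details explicitly (closed-form expressions for (I)--(II), induction along the grid for (III)), which is a faithful expansion of the intended argument.
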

\begin{proof}
The key idea of the  proof of this proposition is to employ the linearity of \eqref{InLCDyn}, in conjunction with the convexity of $\mathcal{S},\,U$ and Proposition \ref{prop:Minkowski}. In particular, it follows analogously to the proof of \cite[Proposition~3.3]{BBCG}.
\end{proof}
    \begin{Theorem}\label{dHRRh}
Consider the linear control problem \eqref{InCond}--\eqref{InLCDyn}. Assume that the set-valued quadrature method and the ODE solver have the same order $p$. Furthermore, assume that $\bar A(\cdot)$ and $\delta^*(l,\Phi(t_f,\cdot)\bar B(\cdot)U)$ have absolutely continuous $(p-2)$-nd derivative, the $(p-1)$-st derivative is of bounded variation uniformly with respect to all $l\in S_{n-1}$ and $\sum_{i=0}^{N}c_i\norm{B(t_i)U}$ is uniformly bounded for $N\in \mathbb{N}$. Then
\begin{equation}
\dH(\mathcal{R}(t_N),\mathcal{R}_h(t_N))\le Ch^p,
\end{equation}
where $C$ is a non-negative constant.
\end{Theorem}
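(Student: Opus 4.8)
The plan is to reduce the set-valued estimate to a family of scalar estimates indexed by the unit directions $l\in S_{n-1}$ through the support function, and then to split the resulting scalar error into an ODE-solver part and a quadrature part. The starting point is the well-known identity
\begin{equation*}
\dH(C,D)=\max_{l\in S_{n-1}}\bigl|\delta^*(l,C)-\delta^*(l,D)\bigr|,\qquad C,D\in\mathcal{C}(\R^n),
\end{equation*}
together with Proposition~\ref{prop:Minkowski} and the fact that the support function of an Aumann integral is the integral of the support functions. Applying these to the representation~\eqref{eq:sublevel_set} of $\mathcal{R}(t_N)$ (note that $t_N=t_f$) and to the global combination formula~\eqref{eq:sv_comb_meth_global} for $\mathcal{R}_h(t_N)$ yields
\begin{align*}
\delta^*(l,\mathcal{R}(t_N)) &= \delta^*(\Phi(t_N,t_0)^Tl,\mathcal{S})
   + \int_{t_0}^{t_N}\delta^*(l,\Phi(t_N,s)\bar B(s)U)\,ds, \\
\delta^*(l,\mathcal{R}_h(t_N)) &= \delta^*(\Phi_h(t_N,t_0)^Tl,\mathcal{S})
   + h\sum_{i=0}^{N}c_i\,\delta^*(l,\Phi_h(t_N,t_i)\bar B(t_i)U).
\end{align*}

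Next I would insert the intermediate quantity $h\sum_{i=0}^{N}c_i\,\delta^*(l,\Phi(t_N,t_i)\bar B(t_i)U)$, built from the \emph{exact} fundamental matrix sampled at the grid nodes, and write the difference $\delta^*(l,\mathcal{R}(t_N))-\delta^*(l,\mathcal{R}_h(t_N))$ as a sum $E_1(l)+E_2(l)+E_3(l)$. Here $E_1(l)=\delta^*(\Phi(t_N,t_0)^Tl,\mathcal{S})-\delta^*(\Phi_h(t_N,t_0)^Tl,\mathcal{S})$ is the solver error on the initial set, $E_2(l)$ is the scalar quadrature error for the integrand $s\mapsto\delta^*(l,\Phi(t_N,s)\bar B(s)U)$ with the exact $\Phi$, and $E_3(l)$ collects the solver errors $\delta^*(l,\Phi(t_N,t_i)\bar B(t_i)U)-\delta^*(l,\Phi_h(t_N,t_i)\bar B(t_i)U)$ weighted by $hc_i$.

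The bounds on $E_1$ and $E_3$ are routine. Using the Lipschitz estimate $|\delta^*(l,A)-\delta^*(l',A)|\le\norm{l-l'}\,\max_{a\in A}\norm{a}$ for the support function in its direction argument, together with $\norm{l}=1$ and the order-$p$ approximation of the fundamental matrix guaranteed by~(II)(b), one obtains $|E_1(l)|\le C\,h^p$ with a constant depending only on $\max_{x\in\mathcal{S}}\norm{x}$, and
\begin{equation*}
|E_3(l)|\le h\sum_{i=0}^{N}c_i\,\norm{\Phi(t_N,t_i)-\Phi_h(t_N,t_i)}\,\norm{\bar B(t_i)U}
   \le C\,h^p\cdot h\sum_{i=0}^{N}c_i\,\norm{\bar B(t_i)U},
\end{equation*}
where the last sum stays bounded by the uniform-boundedness hypothesis on $\sum_{i=0}^{N}c_i\norm{B(t_i)U}$. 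Both bounds are independent of $l$.

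The genuine obstacle is the quadrature term $E_2(l)$, which must be shown to be bounded by $C\,h^p$ \emph{uniformly} in $l\in S_{n-1}$. This is exactly where the regularity hypotheses enter: since $\bar A(\cdot)$ has an absolutely continuous $(p-2)$-nd derivative, the fundamental matrix $\Phi(t_f,\cdot)$ and hence the integrand inherit enough smoothness, and the assumption that $\delta^*(l,\Phi(t_f,\cdot)\bar B(\cdot)U)$ has a $(p-1)$-st derivative of bounded variation \emph{uniformly} in $l$ supplies a single total-variation bound valid for all directions. Invoking the order-$p$ error representation of the chosen quadrature rule (a Peano kernel, or Euler--Maclaurin type, estimate, as already recorded for the set-valued quadrature in~(I)), $|E_2(l)|\le C\,h^p$ follows with a constant independent of $l$. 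Finally, taking the maximum over $l\in S_{n-1}$ and combining the three uniform bounds through the support-function characterization of $\dH$ gives $\dH(\mathcal{R}(t_N),\mathcal{R}_h(t_N))\le C\,h^p$, as claimed. The care needed in the $E_2$ step — propagating the regularity of $\bar A$ through $\Phi$ to the support-function integrand while keeping every constant independent of the direction — is the crux of the argument.
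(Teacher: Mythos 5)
Your proposal is correct and takes essentially the same route as the paper's own proof, which is simply a citation of \cite[Theorem~3.2]{BL}: the argument there is exactly your reduction via the identity $\dH(C,D)=\max_{l\in S_{n-1}}\lvert\delta^*(l,C)-\delta^*(l,D)\rvert$ to uniform scalar estimates, split into the quadrature error for $s\mapsto\delta^*(l,\Phi(t_N,s)\bar B(s)U)$ (controlled by the uniform bounded-variation hypothesis via Peano-kernel arguments) and the ODE-solver errors (controlled by condition (II)(b) together with the boundedness of $\sum_{i}c_i\norm{B(t_i)U}$). The only cosmetic remark is that your $E_2$ bound is precisely the set-valued quadrature estimate already displayed in item (I) of Subsection~\ref{subsec:sv_discr_meth}, so you could have cited it instead of re-deriving it.
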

\begin{proof}
See \cite[Theorem 3.2]{BL}.
\end{proof}
\begin{Remark}
For $p=2$ the requirements of Theorem \ref{dHRRh} are fulfilled if $A(\cdot),\,B(\cdot)$ are absolutely continuous  and $A'(\cdot),\,B'(\cdot)$ are bounded variation (see~\cite{DV}, \cite[Secs.~1.6, 2.3]{BPhD}). 
\end{Remark}
The next subsection is devoted to the full discretization of the reachable set, i.e.~we consider the space discretization as well. Since we will work with supporting points, we do this implicitly by discretizing the set $S_{n-1}$ of normed directions.
 This error will be adapted to the error  of the  set-valued  numerical scheme caused by the time discretization  to preserve its order of convergence with respect to time step size as stated in Theorem~\ref{dHRRh}. Then we will describe in detail the procedure to construct the graph of the minimum time function based on the approximation of the reachable sets.  We will also provide the corresponding overall   error estimate.

\subsection{Implementation and error estimate of the reachable set approximation}\label{subsec:Algorithm}
 For a particular problem, according to its smoothness in an appropriate sense we are first able to choose a difference method with a suitable order, say $O(h^p)$ for some $p>0$, to solve \eqref{fundSol} numerically effectively, for instance Euler scheme, Heun's scheme or Runge-Kutta scheme etc.. Then we approximate Aumann's integral in~\eqref{Rt} by a quadrature formula with the same order, for instance Riemann sum, trapezoid rule, or Simpson's rule etc. to obtain the discrete scheme of the global order $O(h^p)$. 

We implement the set arithmetic operations in~\eqref{semigroupcombmethRh} 
only approximately as indicated in Proposition~\ref{prop:approx_conv_set}
and work with finitely many normed directions 
 \begin{equation} \label{eq:discr_unit_spheres}
    \begin{array}{r@{\,}l@{\,}r@{\,}l@{\,}r@{\,}l}
       S_{\mathcal{R}}^{\Delta} & := \{ & l^k & \,:\, k=1,\ldots,N_{\mathcal{R}} & \} & \subset S_{n-1}, \\
       S_{U}^{\Delta} & := \{ & \eta^r & \,:\, r=1,\ldots,N_U & \} & \subset S_{m-1}
    \end{array}
 \end{equation}
satisfying
 \begin{align*}
    \dH(S_{n-1},S_{\mathcal{R}}^{\Delta}) & \le C h^p, \\
    \dH(S_{m-1},S_{U}^{\Delta}) & \le C h^p
 \end{align*}
to  preserve  the order of the considered scheme  approximating the reachable set.

With this approximation we generate a finite set of supporting points of 
$\mathcal{R}_h(\cdot)$ and with its convex hull the fully discrete reachable set
$\mathcal{R}_{h\Delta}(\cdot)$.  
To reach this target, we also discretize the target set $\mathcal{S}$ and the control set $U$
appearing in~\eqref{eq:sv_quad_meth_local_start} and~\eqref{semigroupcombmethRh}, 
e.g.~along the line of Proposition~\ref{prop:approx_conv_set}:

\begin{equation}\label{SUdelta}
\begin{array}{r@{\,}l@{\,}r@{\,}l@{\,}r@{\,}l}
\widetilde{\mathcal{S}}_{\Delta} & :=\bigcup _  {l^k \in S_{\mathcal{R}}^{\Delta}} & \bb{y(l^k,\mathcal{S})}, & \ \,  \mathcal{S}_{\Delta} := \co(\widetilde{\mathcal{S}}_{\Delta}) \\
\widetilde{U}_{\Delta}& :=\bigcup _  {\eta^r \in S_{U}^{\Delta}} & \bb{y(\eta^r,U)} , & \ \, U_{\Delta} := \co(\widetilde{U}_{\Delta})
\end{array}
\end{equation}
Hence, $\mathcal{S}_{\Delta},\,U_{\Delta}$ are polytopes  approximating 
$\mathcal{S}$ resp.~$U$.

  Let $T_{h\Delta}(\cdot)$ be the fully discrete version of $T(\cdot)$ (it will be 
   defined  later in details). Our aim is to  construct the graph of $T_{h\Delta}(\cdot)$ up to a given time $t_{f}$ based on the knowledge of the reachable set approximation. We divide $[t_0, t_f]$ into $K$ subintervals each of length $\Delta t$. Setting
  $$\Delta t=\frac{t_f-t_0}{K},\,h=\frac{\Delta t}{N},$$
  we have $t_f - t_0 = K N h$ and  compute subsequently the sets of supporting points $Y_{h\Delta}(\Delta t)$,\ldots, $Y_{h\Delta}(t_f)$ by the algorithm described below yielding fully discrete reachable sets
  $\mathcal{R}_{h\Delta}(i \Delta t)$, $i=1,\ldots,K$. Here $K$ decides how many sublevel sets of the graph of $T_{h\Delta}(\cdot)$ we would like to have and $h$ is the step size of the numerical scheme computing $Y_{h\Delta}(i\Delta t)$ starting from $Y_{h\Delta}((i-1)\Delta t)$. 

Due to \eqref{eq:sublevel_set} and \eqref{ReachLevel}, the description of each sublevel set of $T(\cdot)$ can be formulated only with its boundary points, i.e.~the supporting points of the reachable sets at the corresponding time. For the discrete setting, at each step, we will determine the value of $T_{h\Delta}(x)$ for $x \in Y_{h\Delta}(\cdot)$. Therefore, we only store this information for constructing the graph of $T_{h\Delta}(\cdot)$ on the subset $[t_0,t_f]$  of its range.\\ 
\begin{Algorithm}
\label{algorithm}~
\begin{enumerate}
\item[ step  1:] Set $Y_{h\Delta}(t_0)=\widetilde{\mathcal{S}}_{\Delta}$, $\mathcal{R}_{h\Delta}(t_0):= \mathcal{S}_{\Delta}$ as in \eqref{SUdelta}, $i=0$.
\item[ step  2:] Compute $\widetilde{Y}_{h\Delta}(t_{i+1})$ as follows
\begin{align*}
\widetilde{Y}_{h\Delta}(t_{i+1}) & =\Phi_h\big(t_{i+1},t_{i}\big)Y_{h\Delta}\big(t_{i}\big)+h\sum_{j=0}^{N}c_j\Phi_h(t_{i+1},t_{ij})\bar B(t_{ij}) \widetilde U_{\Delta}, \\
  \widetilde{\mathcal{R}}_{h\Delta}(t_{i+1})   & =   \co\big( \widetilde{Y}_{h\Delta}(t_{i+1}) \big), 
\end{align*}
where 
 \begin{align} \label{eq:time_steps}
    t_i & =t_0+i\Delta t,\ t_{ij}=t_i+jh \quad(j=0,1,\ldots,N).
 \end{align}
\item[ step   3:] Compute the set of the supporting points $ \bigcup_{l^k\in S_{\mathcal{R}}^{\Delta}} \bb{y(l^k,\widetilde{ \mathcal{R}}_{h\Delta}(t_{i+1}))}$  and set 
 \begin{align} \label{eq:comp_supp_pts_in_fixed_dir}
    Y_{h\Delta}(t_{i+1}) & =  \bigcup\limits_{l^k\in S_{\mathcal{R}}^{\Delta}} \big\{ y\big(l^k,\widetilde{ \mathcal{R}}_{h\Delta}(t_{i+1})\big)\big)  \big\} 
 \end{align}
 where $ y(l^k,\widetilde{ \mathcal{R}}_{h\Delta}(t_{i+1}))$ is an arbitrary element of $\Y(l^k,\widetilde{ \mathcal{R}}_{h\Delta}(t_{i+1}))$ and set $$\mathcal{R}_{h\Delta}(t_{i+1}):=\co(Y_{h\Delta}(t_{i+1})).$$
\item[step  4:]  If $i<K-1$, set $i=i+1$ and go back to step 2. Otherwise, go to step 5.
\item[step  5:] Construct the graph of $T_{h\Delta}(\cdot)$ by 
the (piecewise) linear interpolation based on the values $t_i$ at the points $Y_{h\Delta}(t_i)$, $i=0,\ldots,K$.
\end{enumerate}
\end{Algorithm}
The algorithm computes the set of vertices $Y_{h\Delta}(t_i)$ of the polygon 
$\mathcal{R}_{h\Delta}(t_i)$ which are supporting points in the directions $l^k \in
S_{\mathcal{R}}^{\Delta}$. 
The
following proposition is the error estimate between the fully discrete reachable set $\mathcal{R}_{h\Delta}(\cdot)$ and   $\mathcal{R}(\cdot)$.  
\begin{Proposition}\label{dHR_deltahl}
 Let Assumptions \ref{standassum}(i)--(iii), together with 
\begin{equation}\label{semiR}
\dH\Big(\mathcal{R}_{h}(t_i),\mathcal{R}(t_i )\Big)\le C_{s} h^p
\end{equation}
  for the set-valued combination method~\eqref{eq:sv_comb_meth_global} in (II), be valid. 
  Furthermore, finitely many directions $S_U^{\Delta},\,S_{\mathcal{R}}^{\Delta} \subset S_{n-1}$ are chosen
  with 
  $$\max(\dH(S_{n-1},S_U^{\Delta}),\dH(S_{n-1},S_{\mathcal{R}}^{\Delta}))\le C_{\Delta} h^p.$$
Then, for $h$ small enough, 
 \begin{equation}\label{dHRRdeltahlglobal}
\begin{aligned}
& \dH\Big(\mathcal{R}_{h\Delta}(t_i),\mathcal{R}_h(t_i )\Big)\le C_{f} h^p,\\
& \dH\Big(\mathcal{R}_{h\Delta}(t_i),\mathcal{R}(t_i)\Big)\le C_{f} h^p,
\end{aligned}
\end{equation}
where $C_s,\,C_{\Delta},\, C_f$ are some positive constants and  $t_i=t_0+i\Delta t,\,i=0,\ldots,K$.
\end{Proposition}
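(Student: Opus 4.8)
The plan is to establish the first estimate in~\eqref{dHRRdeltahlglobal} by a discrete Gronwall argument over the major grid points $t_i = t_0 + i\Delta t$, and then to deduce the second estimate at once from the triangle inequality and the hypothesis~\eqref{semiR}, using
\[
  \dH\big(\mathcal{R}_{h\Delta}(t_i),\mathcal{R}(t_i)\big)
    \le \dH\big(\mathcal{R}_{h\Delta}(t_i),\mathcal{R}_h(t_i)\big)
      + \dH\big(\mathcal{R}_h(t_i),\mathcal{R}(t_i)\big).
\]
For the first estimate I would rewrite the inner iteration of Algorithm~\ref{algorithm}. Since the convex hull commutes with linear images and Minkowski sums and since $\co(Y_{h\Delta}(t_i)) = \mathcal{R}_{h\Delta}(t_i)$ and $\co(\widetilde U_\Delta) = U_\Delta$, the set $\widetilde{\mathcal{R}}_{h\Delta}(t_{i+1})=\co(\widetilde Y_{h\Delta}(t_{i+1}))$ from step~2 equals
\[
  \widetilde{\mathcal{R}}_{h\Delta}(t_{i+1})
    = \Phi_h(t_{i+1},t_i)\,\mathcal{R}_{h\Delta}(t_i)
      + h\sum_{j=0}^{N} c_j\,\Phi_h(t_{i+1},t_{ij})\bar B(t_{ij})\,U_\Delta .
\]
By the semigroup property of $\Phi_h$ the semi-discrete combination method~\eqref{eq:sv_comb_meth_global} satisfies the same recursion over $[t_i,t_{i+1}]$ with the exact control set $U$ in place of $U_\Delta$ and without the supporting point extraction of step~3. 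This isolates the two sources of the fully discrete error, namely the replacement of $U$ by $U_\Delta$ and the extraction in step~3.

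Before estimating, I would record a monotonicity that controls all diameters uniformly: since $U_\Delta\subset U$, $\mathcal{S}_\Delta\subset\mathcal{S}$ and every $\co(Y_{h\Delta}(\cdot))$ is an inner approximation of the corresponding set, an induction on $i$ using the recursion above gives $\mathcal{R}_{h\Delta}(t_i)\subset\widetilde{\mathcal{R}}_{h\Delta}(t_i)\subset\mathcal{R}_h(t_i)$, hence $\diam(\widetilde{\mathcal{R}}_{h\Delta}(t_i))\le\diam(\mathcal{R}_h(t_i))$, which is bounded uniformly in $i$ and $h$ on the compact interval. Now set $e_i:=\dH(\mathcal{R}_{h\Delta}(t_i),\mathcal{R}_h(t_i))$ and split
\[
  e_{i+1}\le \dH\big(\mathcal{R}_{h\Delta}(t_{i+1}),\widetilde{\mathcal{R}}_{h\Delta}(t_{i+1})\big)
    + \dH\big(\widetilde{\mathcal{R}}_{h\Delta}(t_{i+1}),\mathcal{R}_h(t_{i+1})\big).
\]
The first term is the error of replacing $\widetilde{\mathcal{R}}_{h\Delta}(t_{i+1})$ by the convex hull of its supporting points in the directions $S_{\mathcal{R}}^{\Delta}$, so Proposition~\ref{prop:approx_conv_set} bounds it by $2\diam(\widetilde{\mathcal{R}}_{h\Delta}(t_{i+1}))\,C_\Delta h^p$. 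For the second term I would use sub-additivity of $\dH$ under Minkowski sums, the contraction $\dH(MA,MB)\le\norm{M}\dH(A,B)$, and the control bound $\dH(U,U_\Delta)\le 2\diam(U)\,C_\Delta h^p$ from Proposition~\ref{prop:approx_conv_set}, obtaining
\[
  \dH\big(\widetilde{\mathcal{R}}_{h\Delta}(t_{i+1}),\mathcal{R}_h(t_{i+1})\big)
    \le \norm{\Phi_h(t_{i+1},t_i)}\,e_i
      + \Big(h\sum_{j=0}^{N} c_j\,\norm{\Phi_h(t_{i+1},t_{ij})\bar B(t_{ij})}\Big)\,2\diam(U)\,C_\Delta h^p .
\]
Because the weighted sum and all diameters are uniformly bounded, these two contributions combine into an affine recursion $e_{i+1}\le\norm{\Phi_h(t_{i+1},t_i)}\,e_i+\beta h^p$ with $\beta$ independent of $i$ and $h$.

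Finally I would close by discrete Gronwall. For the ODE solvers used in~(II) one has $\norm{\Phi_h(t_{i+1},t_i)}\le e^{L\Delta t}$ with $L$ governed by $\sup_t\norm{A(t)}$, so any product of such factors over the $K$ major steps is at most $e^{L(t_f-t_0)}$. With the starting error $e_0=\dH(\mathcal{S}_\Delta,\mathcal{S})\le 2\diam(\mathcal{S})\,C_\Delta h^p$ from~\eqref{SUdelta} and Proposition~\ref{prop:approx_conv_set}, unrolling the recursion yields
\[
  e_i\le e^{L(t_f-t_0)}\big(e_0+K\beta h^p\big)\le C_f h^p ,
\]
and the triangle inequality from the first paragraph then gives the second estimate. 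The main obstacle is precisely this accumulation: the extraction in step~3 injects a fresh term of size $O(h^p)$ at \emph{every} major step rather than one scaled by $\Delta t$, so a crude summation would produce the factor $K=(t_f-t_0)/(Nh)$. The estimate survives because the number $K$ of prescribed level sets is held fixed while $h=\Delta t/N$ is refined, keeping both $K$ and the Gronwall factor $e^{L(t_f-t_0)}$ bounded; the role of the hypothesis ``$h$ small enough'' is to guarantee that $\Phi_h$ is close enough to $\Phi$ for the transition matrix norms, and hence the Gronwall constant, to be controlled uniformly.
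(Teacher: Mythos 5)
Your argument cannot be checked against the paper's own proof for a simple reason: the paper does not reproduce one, deferring instead to the second author's thesis \cite[Proposition~7.2.5]{Le}. Judged on its own, your proof is correct and self-contained, and it uses exactly the scaffolding the paper sets up for this purpose: the commutation of the convex hull with linear images and Minkowski sums turns step~2 of Algorithm~\ref{algorithm} into the set-valued recursion with $U_\Delta$ in place of $U$; Proposition~\ref{prop:approx_conv_set} controls both $\dH(U,U_\Delta)$ and the supporting-point extraction of step~3; the inner approximations $\mathcal{S}_\Delta \subset \mathcal{S}$, $U_\Delta \subset U$ and $\mathcal{R}_{h\Delta}(t_i)\subset\mathcal{R}_h(t_i)$ supply the uniform diameter bounds needed to apply it; and a discrete Gronwall argument over the $K$ major steps closes the recursion, with the triangle inequality and \eqref{semiR} giving the second estimate. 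Your key observation --- that the extraction error enters afresh at each of the $K$ major steps, so the resulting constant is proportional to the \emph{fixed} number $K$ of level sets rather than to $1/h$ --- is precisely what makes the order $p$ survive, and you handle it correctly. Two caveats, both minor and shared with the paper's own framework: the bounds $\norm{\Phi_h(t_{i+1},t_i)}\le e^{L\Delta t}$ and the uniform boundedness of $h\sum_{j}c_j\norm{\Phi_h(t_{i+1},t_{ij})\bar B(t_{ij})}$ require pointwise (or quadrature-sum) bounds on $A(\cdot)$ and $B(\cdot)$ that do not follow from the integrability in Assumptions~\ref{standassum}(i) alone; this parallels the extra hypotheses of Theorem~\ref{dHRRh} (e.g.~uniform boundedness of $\sum_i c_i\norm{B(t_i)U}$) and is implicitly needed for \eqref{semiR} anyway, so it should simply be stated. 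Also, the induction must start from $\mathcal{R}_{h\Delta}(t_0)=\mathcal{S}_\Delta\subset\mathcal{S}=\mathcal{R}_h(t_0)$, since $\widetilde{\mathcal{R}}_{h\Delta}(t_0)$ is not defined; this is cosmetic.
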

\begin{proof}
  \emph{The proof can be found in~\cite[Proposition~7.2.5]{Le}.}
\end{proof}
\begin{Remark}
If $\mathcal S$ is a singleton, we do not need to discretize the target set. 
The overall error estimate in~\eqref{dHRRdeltahlglobal} even improves in this case, since 
$\dH\big(\widetilde{\mathcal{R}}_{h\Delta}(t_0),\mathcal{R}_h(t_0)\big)=0$. 
\end{Remark}
As we can see in this subsection the convexity of the reachable set plays a vital role. Therefore, this approach  can only be extended to special nonlinear  control systems 
with convex reachable sets.

In the following subsection, we provide the error estimation of $T_{h\Delta}(\cdot)$ obtained by the indicated approach under  Assumptions~\ref{standassum}, the regularity of $T(\cdot)$ and the properties of the numerical approximation.
\subsection{Error estimate of the minimum time function}
 After computing the fully discrete reachable sets  in Subsection~\ref{subsec:Algorithm},  we obtain the values  of $T_{h\Delta}(x)$ 
for all $x\in  \bigcup_{i=0,\ldots,K}  Y_{h\Delta}(t_i)$, $t_i= t_0 +  i\Delta t$. 
For all boundary points $x \in \partial \mathcal{R}_{h\Delta}(t_i)$ and some $i=1,\ldots,K$, we define
 \begin{align}
    T_{h\Delta}(x) & = t_i \text{ for } 
                                       x\in \partial  \mathcal{R}_{h\Delta}(t_i) ,
                                  \label{eq:discr_min_time_bd_pt}
    \intertext{together with the initial condition}
    T_{h\Delta}(x) & = t_0 \ \, \text{ for }  x \in \mathcal{S}_{\Delta}. \nonumber
 \end{align}
 The task is now to define a suitable value of $T_{h\Delta}(x)$ 
 in the computational domain
 $$ 
      \Omega := \bigcup_{i=0,\ldots,K} \mathcal{R}_{h\Delta}(t_i),
 $$ 
 if $x$ is neither a boundary point of reachable sets nor lies inside the target set.
 First we construct a simplicial triangulation 
 $\bb{\Gamma_j}_{j=1,\ldots,M}$ 
 over the set 
 $\Omega \setminus \inter(\mathcal{S})$ of points
 with grid nodes in $ \bigcup_{i=0,\ldots,K} Y_{h\Delta}(t_i) $.
 Hence, 
  \begin{itemize}
     \item $\Gamma_j \subset \R^n$ is a simplex for $j=1,\ldots,M$,
     \item $\Omega \setminus \inter(\mathcal{S}) = \bigcup_{j=1,\ldots,M}\Gamma_j$,
     \item the intersection of two different simplices is either empty
           or a common face
     \item all supporting points in the sets $\bb{Y_{h\Delta}(t_i)}_{i=0,\ldots,K}$
           are vertices of some simplex,
     \item all the vertices of each simplex have to belong either to 
           the fully discrete reachable set
           $\mathcal{R}_{h\Delta}(t_i)$ or to $\mathcal{R}_{h\Delta}(t_{i+1})$ for some $i=0,1,\ldots,K-1$.
  \end{itemize}
 For the triangulation as in Figure~\ref{fig:part_triang},
 we introduce the maximal diameter of simplices as
 $$\Delta_{\Gamma}:= \max_{j=1,\ldots,M} \diam(\Gamma_j) .$$
\begin{figure}[htp]
\begin{center}
\includegraphics[scale=0.4]{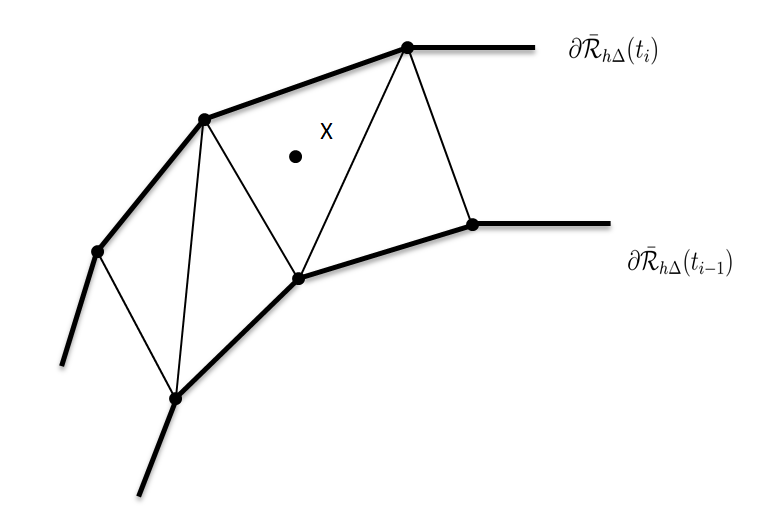}
\end{center}
\caption{part of the triangulation}
\label{fig:part_triang}
\end{figure}\\
 Assume that $x$ is neither a boundary point of one of the computed discrete
     reachable sets $\bb{\mathcal{R}_{h\Delta}(t_i)}_{i=0,\ldots,K}$ nor an element of
     the target set $\mathcal{S}$ and
 let $ \Gamma_j $ be the simplex containing $x$.
 Then 
  \begin{align} \label{eq:cpa_def}
     T_{h\Delta}(x) & =\sum_{ \nu =1}^{ n+1 }\lambda_{ \nu } T_{h\Delta}(x_{ \nu} ),
  \end{align}
 where $x=\sum_{ \nu =1}^{ n+1 }\lambda_{ \nu } x_{ \nu },\,\sum_{ \nu =1}^{\ n+1 }\lambda_{ \nu }=1$  with  $\lambda_{ \nu }\ge 0$
 and $\bb{x_{ \nu }}_{ \nu =1, \ldots,n+1 }$  being the vertices  of $ \Gamma_j $.
  
 If $x$ lies in the interior of $\Gamma_j$, the index $j$ of this simplex is unique.
 Otherwise, $x$ lies on the common face of two or more simplices due to our
 assumptions on the simplicial triangulation and~\eqref{eq:cpa_def} is well-defined. Let $i$ be the index such that $\Gamma_j \in   \mathcal{R}_{h\Delta}(t_i) \setminus  \inter(\mathcal{R}_{h\Delta}(t_{i-1}))$.

 Since $T_{h\Delta}(x_\nu)$ is either $t_i$ or $t_{i-1}$ due to~\eqref{eq:discr_min_time_bd_pt}, we have
  \begin{align*}
     T_{h\Delta}(x) & = \sum_{\nu=1}^{n+1}\lambda_\nu T_{h\Delta}(x_\nu) \le t_i
     \intertext{and}
     \partial \mathcal{R}_{h\Delta}(t_i) & = \bb{y \in \R^n: T_{h\Delta}(y) = t_i}.
  \end{align*}
 The latter holds, since the convex combination is bounded by $t_i$ and equality to $t_i$
 only holds, if all vertices with positive coefficient $\lambda_\nu$ lie on the
 boundary of the reachable set $\mathcal{R}_{h\Delta}(t_i)$.

The following theorem is about the error estimate of the minimum time function obtained by this approach.
\begin{Theorem}\label{errT}
Assume that $T(\cdot)$ is continuous with a non-decreasing modulus $\omega(\cdot)$ in $\mathcal{R}$, i.e.
\begin{equation}
  | T(x)-T(y)|\le \omega(\norm{x-y})\,\,\text{ for all } x,y \in \mathcal{R}.
\end{equation}
Let Assumptions~\ref{standassum} be fulfilled, furthermore assume that
\begin{equation}\label{dHRRhlll}
\dH(\mathcal{R}_{h\Delta}(t),\mathcal{R}(t))\le Ch^p
\end{equation}
holds.  Then
\begin{equation}\label{fullestT}
\norm{T-T_{h\Delta}}_{\infty,\, \Omega  }\le \omega( \Delta_{\Gamma} )+ \omega(Ch^p).
\end{equation}
where $\norm{\cdot}_{\infty,\, \Omega  }$ is the supremum norm taken over $ \Omega  $.
\end{Theorem}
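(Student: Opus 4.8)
The plan is to bound $|T(x) - T_{h\Delta}(x)|$ pointwise on $\Omega$ by decomposing the error through an intermediate quantity and invoking the modulus of continuity twice, once for the spatial interpolation error and once for the reachable-set approximation error. Fix $x \in \Omega$ and let $\Gamma_j$ be a simplex containing $x$, with vertices $\{x_\nu\}_{\nu=1,\ldots,n+1}$ and barycentric coordinates $\lambda_\nu \geq 0$ summing to one, so that $T_{h\Delta}(x) = \sum_{\nu} \lambda_\nu T_{h\Delta}(x_\nu)$ by~\eqref{eq:cpa_def}. Each vertex $x_\nu$ lies in some $Y_{h\Delta}(t_{i(\nu)})$, hence on a boundary $\partial \mathcal{R}_{h\Delta}(t_{i(\nu)})$, so its discrete value equals the exact level $t_{i(\nu)}$ by~\eqref{eq:discr_min_time_bd_pt}.

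The key step is to control each vertex term $|T(x) - T_{h\Delta}(x_\nu)|$. First I would compare $T(x)$ with $T(x_\nu)$: since $x$ and $x_\nu$ lie in the same simplex, $\|x - x_\nu\| \leq \Delta_\Gamma$, so the modulus assumption gives $|T(x) - T(x_\nu)| \leq \omega(\Delta_\Gamma)$. The harder comparison is $|T(x_\nu) - T_{h\Delta}(x_\nu)| = |T(x_\nu) - t_{i(\nu)}|$. Here I would use the boundary characterization: since $x_\nu \in \partial\mathcal{R}_{h\Delta}(t_{i(\nu)})$ and, by the error estimate~\eqref{dHRRhlll}, $\dH(\mathcal{R}_{h\Delta}(t_{i(\nu)}), \mathcal{R}(t_{i(\nu)})) \leq C h^p$, there is a point $z_\nu \in \mathcal{R}(t_{i(\nu)})$ with $\|x_\nu - z_\nu\| \leq C h^p$. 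By Proposition~\ref{prop:bd_descr_monotone_case_w_level_set} we have $T(z_\nu) \leq t_{i(\nu)}$ when $z_\nu$ is chosen inside, and combined with the reverse Hausdorff bound one gets $|T(x_\nu) - t_{i(\nu)}| \leq \omega(C h^p)$ after transporting through the modulus. Assembling these, $|T(x) - T(x_\nu)| + |T(x_\nu) - T_{h\Delta}(x_\nu)| \leq \omega(\Delta_\Gamma) + \omega(C h^p)$ for every $\nu$.

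Finally I would recombine using convexity of the bound: because $T(x) = \sum_\nu \lambda_\nu T(x)$ (the coefficients sum to one) and $T_{h\Delta}(x) = \sum_\nu \lambda_\nu T_{h\Delta}(x_\nu)$, the difference is
\[
  |T(x) - T_{h\Delta}(x)| = \Bigl| \sum_\nu \lambda_\nu \bigl(T(x) - T_{h\Delta}(x_\nu)\bigr) \Bigr| \leq \sum_\nu \lambda_\nu \bigl(\omega(\Delta_\Gamma) + \omega(C h^p)\bigr) = \omega(\Delta_\Gamma) + \omega(C h^p),
\]
using $\sum_\nu \lambda_\nu = 1$. Taking the supremum over $x \in \Omega$ yields~\eqref{fullestT}.

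\textbf{Main obstacle.} The delicate point is the vertex estimate $|T(x_\nu) - t_{i(\nu)}| \leq \omega(C h^p)$, because it requires relating the \emph{exact} minimum time at a supporting point of the \emph{discrete} reachable set to the discrete level $t_{i(\nu)}$. One must move from $x_\nu \in \partial\mathcal{R}_{h\Delta}(t_{i(\nu)})$ to a genuine reachable-set boundary point at a nearby time, control the resulting displacement by $\dH(\mathcal{R}_{h\Delta}(t_{i(\nu)}), \mathcal{R}(t_{i(\nu)})) \leq C h^p$, and then invoke~\eqref{ReachLevel} together with the non-decreasing modulus $\omega$; care is needed that the distance bound feeds correctly into $\omega$ and that the argument is symmetric in both Hausdorff directions.
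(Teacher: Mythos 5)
Your overall architecture coincides with the paper's: interpolate on the simplex, split the error into a spatial part $\omega(\Delta_\Gamma)$ and a vertex part, and reduce everything to the two-sided vertex estimate $|T(x_\nu)-t_{i(\nu)}|\le\omega(Ch^p)$. But precisely at that point --- which you yourself flag as the main obstacle --- your argument has a genuine gap. Choosing $z_\nu\in\mathcal{R}(t_{i(\nu)})$ with $\|x_\nu-z_\nu\|\le Ch^p$ and using $T(z_\nu)\le t_{i(\nu)}$ (from the sublevel-set identity \eqref{eq:reach_leq_sub_level_set}) only yields the one-sided bound $T(x_\nu)\le t_{i(\nu)}+\omega(Ch^p)$. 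For the opposite inequality $t_{i(\nu)}-T(x_\nu)\le\omega(Ch^p)$ you must produce a point $\bar x$ with $T(\bar x)=t_{i(\nu)}$, equivalently (by \eqref{ReachLevel}) a point $\bar x\in\partial\mathcal{R}(t_{i(\nu)})$, lying within $Ch^p$ of $x_\nu$, and then apply the modulus to the pair $(x_\nu,\bar x)$. The ``reverse Hausdorff bound'' $\mathcal{R}(t_{i(\nu)})\subset B_{Ch^p}(\mathcal{R}_{h\Delta}(t_{i(\nu)}))$ does not produce such a point: Hausdorff closeness of two compact sets does not imply closeness of their boundaries (compare a disk with a thin annulus), so knowing $x_\nu\in\partial\mathcal{R}_{h\Delta}(t_{i(\nu)})$ together with \eqref{dHRRhlll} does not, without a further argument, place $x_\nu$ near $\partial\mathcal{R}(t_{i(\nu)})$; and a point lying deep inside $\mathcal{R}(t_{i(\nu)})$ can have $T$ much smaller than $t_{i(\nu)}$, so no lower bound follows from it.

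The paper closes exactly this hole with a convexity result of Wills \cite{Wil}: for convex compact sets $A,B$ one has $\dH(\partial A,\partial B)=\dH(A,B)$. Since $\mathcal{R}_{h\Delta}(t_i)$ and $\mathcal{R}(t_i)$ are convex and compact, a best approximation $\bar x\in\partial\mathcal{R}(t_i)$ of the vertex satisfies $\|x_\nu-\bar x\|=\di(x_\nu,\partial\mathcal{R}(t_i))\le\dH(\partial\mathcal{R}_{h\Delta}(t_i),\partial\mathcal{R}(t_i))=\dH(\mathcal{R}_{h\Delta}(t_i),\mathcal{R}(t_i))\le Ch^p$, and Proposition~\ref{prop:bd_descr_monotone_case_w_level_set} gives $T(\bar x)=t_i$ exactly; a single application of the modulus then yields both directions at once, $|T(x_\nu)-t_i|\le\omega(Ch^p)$. (You could avoid citing \cite{Wil} by exploiting that $x_\nu$ is a supporting point of $\mathcal{R}_{h\Delta}(t_i)$ in a known direction $l^k$ and comparing support functions to show $\di(x_\nu,\partial\mathcal{R}(t_i))\le Ch^p$, but some such argument is indispensable: the statement you need is false for general nonconvex compact sets, so convexity of the reachable sets must enter explicitly.) The rest of your proof --- the convex recombination using $\sum_\nu\lambda_\nu=1$ and taking the supremum over $\Omega$ --- matches the paper's two-case argument and is fine.
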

\begin{proof}
We divide the proof into two cases.
\begin{enumerate}
\item[case 1:] $x\in \partial \mathcal{R}_{h\Delta}(t_i)$ for some $i=1,\ldots,K$. \\
Let us choose a best approximation 
$\bar{x} \in \partial\mathcal{R}(t_i)$ of $x$ so that
$$\norm{x-\bar x} = \di(x,\partial \mathcal{R}(t_i)) \leq \dH(\partial \mathcal{R}_{h\Delta}(t_i),\partial \mathcal{R}(t_i))
  =  \dH(\mathcal{R}_{h\Delta}(t_i),\mathcal{R}(t_i)),$$
where we used \cite{Wil} in the latter equality. Clearly, \eqref{ReachLevel}, 
\eqref{eq:cpa_def} show that
$$
  T_{h\Delta}(x)=T(\bar{x})=t_i.
$$
 Then
\begin{align}
|T(x)-T_{h\Delta}(x)|&\le |T(x)-T(\bar{x})|+|T(\bar{x})-T_{h\Delta}(x)| \nonumber \\
& \le \omega(\norm{x-\bar{x}}) \le \omega\big(\dH(\mathcal{R}_{h\Delta}(t_i),\mathcal{R}(t_i))\big)\le \omega(Ch^p) \label{eq:estim_T} 
\end{align} 
due to \eqref{dHRRhlll}.

\item[ case 2:]
$x\in \inter\big(\mathcal{R}_{h\Delta}(t_i)\big)\setminus \mathcal{R}_{h\Delta}(t_{i-1})$
 for some $i=1,\ldots,K$. \\
Let $ \Gamma_j $ be a simplex containing $x$ with 
 the set of  vertices  $\bb{x_j}_{j=1,\ldots, n+1 }$. Then $$T_{h\Delta}(x)=\sum_{j=1}^{ n+1 }\lambda_j T_{h\Delta}(x_j),$$ where $x=\sum_{j=1}^{ n+1 }\lambda_j x_j,\,\sum_{j=1}^{ n+1 } \lambda_j=1, \lambda_j\ge 0$. 
 We obtain
\begin{align*}
&|T(x)-T_{h\Delta}(x)|= |T(x)-\sum_{j=1}^{ n+1 }\lambda_j T_{h\Delta}(x_j)|\\
&\le |T(x)-\sum_{j=1}^{ n+1 }\lambda_j T( x_j)|+|\sum_{j=1}^{ n+1 }\lambda_j T(x_j)-\sum_{j=1}^{ n+1 }\lambda_j T_{h\Delta}(x_j)| \\
&  \le \sum_{j=1}^{\ n+1 }\lambda_j \bigg( |T(x)-T( x_j)|+ |T(x_j) - T_{h\Delta}(x_j)| \bigg)   \le \omega(\Delta_\Gamma)+  \omega(Ch^p),
\end{align*}
where we applied the continuity of $T(\cdot)$ for the first term and the error estimate~\eqref{eq:estim_T} of case~1 for the other.
\end{enumerate}
Combining two cases and noticing that $T(x)=T_{h\Delta}(x)=t_0$ if $x\in \mathcal{S}_{\Delta}$, we get
\begin{equation}
\norm{T-T_{h\Delta}}_{\infty,\,\Omega}
:= \max_{x \in \Omega} |T(x) -T_{h\Delta}(x)|
\le \omega(\Delta_{\Gamma})+ \omega(Ch^p).
\end{equation}
The proof is completed.
\end{proof}
\begin{Remark}\label{Rem_errT}
Theorem  \ref{dHRRh}  provides sufficient conditions for set-valued combination methods such that \eqref{dHRRhlll} holds. 
 See also  e.g.~ \cite{DF} for set-valued Euler's method resp. ~\cite{V} for Heun's method.
If the minimum time function is H\"older continuous  on $\Omega $, \eqref{fullestT} becomes 
\begin{equation}\label{errorHolder}
\norm{T-T_{h\Delta}}_{\infty,\, \Omega  }\le   C\Big((\Delta_{\Gamma} )^{\frac{1}{k}}+  h^{\frac{p}{k}}\Big)
\end{equation}
for some positive constant $C$. The inequality \eqref{errorHolder} shows that the error 
estimate is improved in comparison with the one obtained in~\cite{CL} 
and does not assume explicitly the regularity of optimal solutions as in~\cite{BF2}.
One possibility to define the modulus of continuity satisfying the required  property of non-decrease in Theorem~\ref{errT} is as follows:
\begin{equation*}
\omega(\delta) = \sup \{ \lvert T(x) - T(y)\rvert : \norm{x-y} \le \delta\}
\end{equation*}
An advantage of the methods of Volterra type studied in~\cite{CL} which benefit from
non-standard selection strategies is that the discrete reachable sets converge with higher 
order than 2. The order 2 is an order barrier for set-valued Runge-Kutta methods 
with piecewise constant controls or independent choices of controls, 
since many linear control problems with intervals or boxes for the control values
are not regular enough for higher order approximations (see~\cite{V}).
\end{Remark}
\begin{Remark} There are many different triangulations based on the same data. Among them, we can always choose the one with a smaller diameter close to the Hausdorff distance of the two sets by applying standard grid generators. For example, from the same set of data we can build the two following grids and it is easy to see 
in Figure~\ref{fig:diff_triang}
that the left one 
(for which only three edges are emerging from the corner of the bigger reachable set) 
gives a better approximation, since the maximal diameter in the triangulation at the right
is much bigger.
\begin{figure}[htp]
\begin{center}
\includegraphics[scale=0.4]{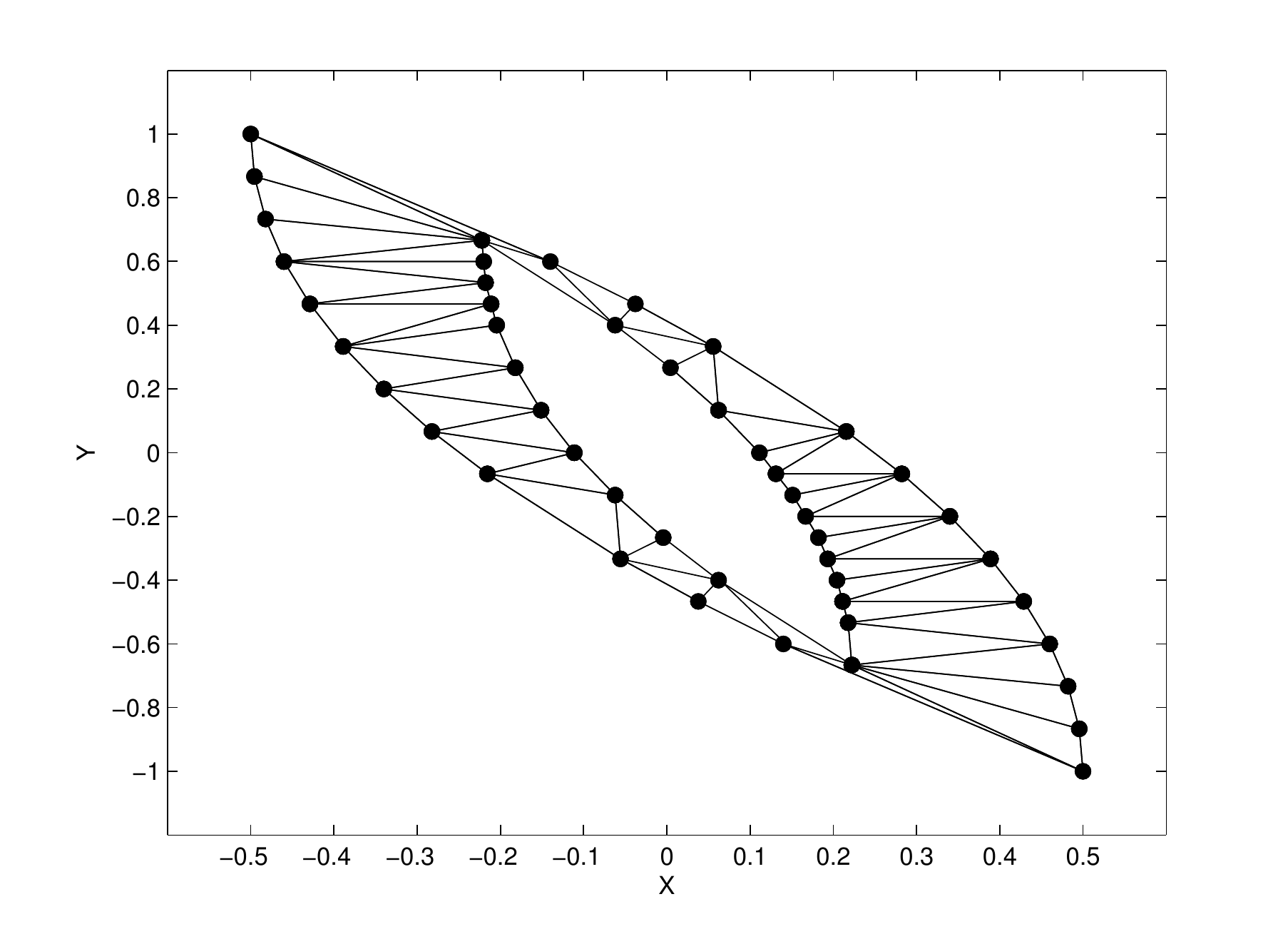}
\includegraphics[scale=0.4]{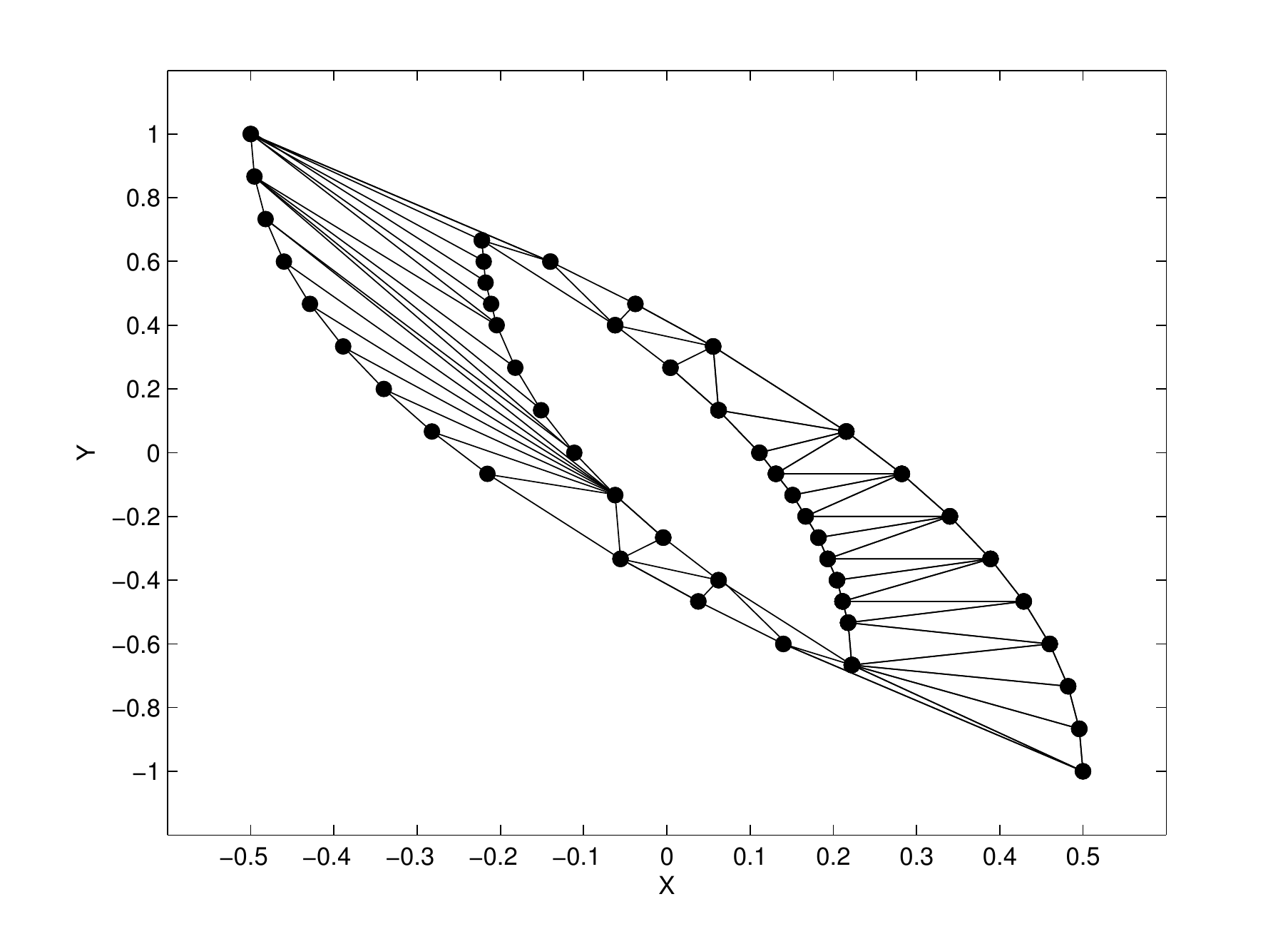}
\end{center}
\caption{two triangulations for the linear interpolation of the minimum time function}
\label{fig:diff_triang}
\end{figure}
\end{Remark}

\begin{Proposition}\label{errTt2}
Let the conditions of Theorem~\ref{errT} be fulfilled.
Furthermore assume that the step size $h$ is so small such that $C h^p$ in~\eqref{dHRRhlll} 
is smaller than $\frac{\varepsilon}{3}$, where
 \begin{align}\label{inclusionR}
    \mathcal{R}(t_i) + \varepsilon B_1(0)
      & \subset \inter \mathcal{R}(t_{i+1}) \quad\mbox{for all $i=0,\ldots,K-1$}.
 \end{align}
Then
 \begin{align}\label{inclusionRh}
     \mathcal{R}_{h\Delta}(t_i) + \frac{\varepsilon}{3} B_1(0)
       & \subset \inter \mathcal{R}_{h\Delta}(t_{i+1})
 \end{align}
and
\begin{equation}\label{fullestTt2}
\norm{T-T_{h\Delta}}_{\infty,\,  \Omega  }\le 2 \Delta t,
\end{equation}
where $\norm{\cdot}_{\infty,\, \Omega  }$ is the supremum norm taken over $ \Omega $.
\end{Proposition}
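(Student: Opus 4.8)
The plan is to treat the two assertions separately but on a common footing, since both reduce to sandwiching the fully discrete reachable sets between two time-shifted exact reachable sets. Write $\rho := C h^p$, so that the hypothesis~\eqref{dHRRhlll} reads $\dH(\mathcal{R}_{h\Delta}(t_i),\mathcal{R}(t_i)) \le \rho < \frac{\varepsilon}{3}$ for every $i$, while the strict expansion~\eqref{inclusionR} is available between consecutive grid points. All sets occurring are convex and compact, so the \emph{order cancellation law} \cite[Theorem~3.2.1]{PalUrb} (already used in Proposition~\ref{prop:bd_descr_monotone_case_w_level_set}) may be invoked freely to strip a common Minkowski summand $\rho B_1(0)$; this is the single technical device that drives everything.

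First I would establish the chain of containments $\mathcal{R}(t_{i-1}) \subset \inter\mathcal{R}_{h\Delta}(t_i)$ and $\mathcal{R}_{h\Delta}(t_i) \subset \mathcal{R}(t_{i+1})$. For the second, $\mathcal{R}_{h\Delta}(t_i) \subset \mathcal{R}(t_i) + \rho B_1(0) \subset \mathcal{R}(t_i) + \varepsilon B_1(0) \subset \inter\mathcal{R}(t_{i+1})$ by~\eqref{inclusionR} and $\rho<\varepsilon$. For the first, $\mathcal{R}(t_{i-1}) + \varepsilon B_1(0) \subset \mathcal{R}(t_i) \subset \mathcal{R}_{h\Delta}(t_i) + \rho B_1(0)$, and cancelling $\rho B_1(0)$ gives $\mathcal{R}(t_{i-1}) + (\varepsilon-\rho) B_1(0) \subset \mathcal{R}_{h\Delta}(t_i)$, whence $\mathcal{R}(t_{i-1}) \subset \inter\mathcal{R}_{h\Delta}(t_i)$ because $\varepsilon-\rho>0$. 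The same bookkeeping yields~\eqref{inclusionRh}: since $\rho + \frac{2\varepsilon}{3} < \varepsilon$,
\[
  \mathcal{R}_{h\Delta}(t_i) + \tfrac{2\varepsilon}{3} B_1(0) \subset \mathcal{R}(t_i) + \varepsilon B_1(0) \subset \mathcal{R}(t_{i+1}) \subset \mathcal{R}_{h\Delta}(t_{i+1}) + \rho B_1(0),
\]
and cancelling $\rho B_1(0)$ produces $\mathcal{R}_{h\Delta}(t_i) + (\tfrac{2\varepsilon}{3}-\rho) B_1(0) \subset \mathcal{R}_{h\Delta}(t_{i+1})$. As $\tfrac{2\varepsilon}{3}-\rho > \tfrac{\varepsilon}{3}$, every point of $\mathcal{R}_{h\Delta}(t_i) + \tfrac{\varepsilon}{3}B_1(0)$ still carries a ball of radius $\tfrac{\varepsilon}{3}-\rho>0$ inside $\mathcal{R}_{h\Delta}(t_{i+1})$, hence lies in its interior, which is exactly~\eqref{inclusionRh}.

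For the error bound, fix $x \in \Omega$; if $x \in \mathcal{S}_\Delta$ then $T(x)=T_{h\Delta}(x)=t_0$ and nothing is to prove, so let $i \in \{1,\ldots,K\}$ be the unique index with $x \in \mathcal{R}_{h\Delta}(t_i)\setminus \inter\mathcal{R}_{h\Delta}(t_{i-1})$, which is well defined because~\eqref{inclusionRh} makes the discrete sets strictly nested. By the construction of $T_{h\Delta}$ in~\eqref{eq:discr_min_time_bd_pt}--\eqref{eq:cpa_def} the vertices of the simplex containing $x$ carry only the values $t_{i-1}$ and $t_i$, so $T_{h\Delta}(x) \in [t_{i-1},t_i]$. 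Using the sandwich together with the sublevel identity $\mathcal{R}(t)=\RSU(t)=\{y:T(y)\le t\}$ from Remark~\ref{rem:strict_expand} and~\eqref{eq:reach_leq_sub_level_set}: from $x \in \mathcal{R}_{h\Delta}(t_i) \subset \mathcal{R}(t_{i+1})$ we get $T(x)\le t_{i+1}$, and from $\mathcal{R}(t_{i-2}) \subset \inter\mathcal{R}_{h\Delta}(t_{i-1})$ (the first containment with $i\to i-1$) together with $x \notin \inter\mathcal{R}_{h\Delta}(t_{i-1})$ we get $x \notin \mathcal{R}(t_{i-2})$, i.e.\ $T(x) > t_{i-2}$. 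Consequently $T(x)-T_{h\Delta}(x) \le t_{i+1}-t_{i-1} = 2\Delta t$ and $T_{h\Delta}(x)-T(x) < t_i - t_{i-2} = 2\Delta t$, so $|T(x)-T_{h\Delta}(x)| \le 2\Delta t$, which is~\eqref{fullestTt2}.

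The step demanding most care is the $\frac{\varepsilon}{3}$ bookkeeping in the cancellation argument, where the strict inequality $\rho<\frac{\varepsilon}{3}$ must be spent carefully to convert a bare inclusion into an \emph{interior} inclusion. The genuine obstacle, however, is the behaviour at the two ends of the grid: for $i=1$ the shifted set $\mathcal{R}(t_{i-2})$ is undefined, but there the lower bound is replaced by the trivial $T(x)\ge t_0$; for $i=K$ the upper bound invokes $\mathcal{R}(t_{K+1})$, which requires the strict expansion~\eqref{inclusionR} one step beyond $t_f$. Since the dynamics~\eqref{LCDyn} is posed on the whole of $[t_0,\infty)$, one may assume strict expandedness on a marginally larger interval (equivalently, take $t_f$ strictly inside the horizon on which Assumption~\ref{standassum}(iv) holds), so that $\mathcal{R}(t_{K+1})$ exists and the top layer is covered; otherwise the outermost annulus is absorbed into the $2\Delta t$ slack. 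These endpoint adjustments aside, the remainder is pure Minkowski-inclusion algebra.
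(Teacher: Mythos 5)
Your proof is correct and follows essentially the same route as the paper: both establish the sandwich inclusions $\mathcal{R}(t_{i-1})\subset\inter\mathcal{R}_{h\Delta}(t_i)$ and $\mathcal{R}_{h\Delta}(t_i)\subset\inter\mathcal{R}(t_{i+1})$ from the Hausdorff bound~\eqref{dHRRhlll} via the order cancellation law of~\cite[Theorem~3.2.1]{PalUrb}, deduce~\eqref{inclusionRh} by the same $\varepsilon/3$ bookkeeping, and then trap $T(x)\in(t_{i-2},t_{i+1}]$ and $T_{h\Delta}(x)\in[t_{i-1},t_i]$ on each annulus to obtain the $2\Delta t$ bound (the paper phrases the trapping as contradiction arguments on the simplex vertices, you state the inclusions directly -- a purely cosmetic difference). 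Your explicit handling of the endpoint cases, in particular the observation that the upper bound at $i=K$ needs the strict expansion property one step beyond $t_f$, is a point the paper's own proof passes over silently.
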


\begin{proof}

For some $i = 0,\ldots,K-1$ we choose a constant $M_{i+1} > 0$ such that 
$\mathcal{R}(t_{i+1}) \subset M_{i+1} B_1(0)$.
Since $\mathcal{R}(t_i)$ does not intersect the complement of
$ \inter \mathcal{R}(t_{i+1}) $ bounded with $M_{i+1} B_1(0)$ and both are compact sets, 
there exists $\varepsilon > 0$ such that
 \begin{align}
    \mathcal{R}(t_i) + \varepsilon B_1(0) 
      & \subset \inter \mathcal{R}(t_{i+1}) \subset M_{i+1} B_1(0). \label{eq:eps_incl}
 \end{align}
We will show that a similar inclusion as~\eqref{eq:eps_incl} holds for the 
discrete reachable sets for small step sizes. If the step size $h$ is so small
that $C h^p$ in~\eqref{dHRRhlll} is smaller than $\frac{\varepsilon}{3}$, then we have
the following inclusions:
 \begin{align}
     \inter \mathcal{R}(t_{i+1}) 
       & \subset \inter \big( \mathcal{R}_{h\Delta}(t_{i+1}) + C h^p B_1(0) \big)
         = \inter \mathcal{R}_{h\Delta}(t_{i+1}) + C h^p \inter B_1(0),\nonumber \\
     \mathcal{R}(t_i) + \varepsilon B_1(0)
       & \subset \inter \mathcal{R}(t_{i+1}) 
         \subset \inter \mathcal{R}_{h\Delta}(t_{i+1}) + \frac{\varepsilon}{3} B_1(0). \nonumber
     \intertext{By the order cancellation law of convex compact sets in~\cite[Theorem~3.2.1]{PalUrb}}
     \mathcal{R}(t_i) + \frac{2}{3}\varepsilon B_1(0)
       & \subset \inter \mathcal{R}_{h\Delta}(t_{i+1}) \nonumber
     \intertext{and}
     \mathcal{R}_{h\Delta}(t_i) + \frac{\varepsilon}{3} B_1(0)
       & \subset \big( \mathcal{R}(t_i) + \frac{\varepsilon}{3} B_1(0) \big)
         + \frac{\varepsilon}{3} B_1(0)
         \subset \inter \mathcal{R}_{h\Delta}(t_{i+1}) \label{est2levfull}.
 \end{align}
We have
\begin{equation}\label{estcase2b}
|T(x)-T_{h\Delta}(x)|= \sum_{j=1}^{ n+1 }\lambda_j|T(x)- T_{h\Delta}(x_j)|.
\end{equation}
In order to obtain the estimate, we observe that
\begin{enumerate}
\item[1)]   $x_j \in \partial \mathcal{R}_{h\Delta}(t_i)$, then 
$t_\nu \le T(x_j)\le t_{i+1}$ with $\nu=\max\{0, i-1\}$.
\item[2)]    $x\in \inter(\mathcal{R}_{h\Delta}(t_i))\setminus \mathcal{R}_{h\Delta}(t_{i-1})$,
then $t_\nu < T(x)\le t_{i+1}$ with $\nu=\max\{0, i-2\}$.
\end{enumerate}
To prove 1) the inequality
$T(x_j) >= t_0$ is clear. Assume that $T(x_j) < t_{i-1}$ for some $i > 1$. Then $x_j
\in \mathcal{R}(t_{i-1})$. By the estimates~\eqref{dHRRhlll},~\eqref{est2levfull} and $C h^p<\frac{\varepsilon}{3}$, it follows that
 \begin{align*}
    x_j & \in \mathcal{R}_{h\Delta}(t_{i-1}) + C h^p B_1(0) \subset \inter \mathcal{R}_{h\Delta}(t_i)
 \end{align*}
which is a contradiction to the assumption $x_j \in \partial \mathcal{R}_{h\Delta}(t_i)$. Hence, $T(x_j) \geq t_{i-1}$. Assume that $T(x_j) > t_{i+1}$. Then, $x_j \notin \mathcal{R}(t_{i+1})$. Furthermore, $x_j$ cannot be an element of $\mathcal{R}_{h\Delta}(t_i)$, since otherwise
 \begin{align*}
    x_j & \in \mathcal{R}_{h\Delta}(t_i) \subset \mathcal{R}(t_i) + C h^p B_1(0)
      \subset \inter \mathcal{R}(t_{i+1})
 \end{align*}
which is a contradiction to $x_j \notin \mathcal{R}(t_{i+1})$. \\
Therefore, $x_j \notin \mathcal{R}_{h\Delta}(t_i)$  which
contradicts $x_j \in \partial \mathcal{R}_{h\Delta}(t_i)$. Hence, the starting assumption $T(x_j) > t_{i+1}$ must be wrong which proves 
$T(x_j) \leq t_{i+1}$.  \\
To prove 2) 
if we assume $T(x) \leq t_{i-2}$ for some $i \geq 2$, then $x \in \mathcal{R}(t_{i-2})$
and 
 \begin{align*}
    x & \in \mathcal{R}_{h\Delta}(t_{i-2}) + C h^p B_1(0) \subset \inter \mathcal{R}_{h\Delta}(t_{i-1})
 \end{align*}
by estimate~\eqref{dHRRhlll}.
But this contradicts $x \notin \mathcal{R}_{h\Delta}(t_{i-1})$. Therefore, $T(x) > t_{i-2}$.
 
Assuming $T(x) > t_{i+1}$ for some $i < K-1$, then $x \notin \mathcal{R}(t_{i+1})$.
Furthermore, if $x$ is an element of $\mathcal{R}_{h\Delta}(t_i)$,
 \begin{align*}
    x & \in \mathcal{R}_{h\Delta}(t_i) \subset \mathcal{R}(t_i) + C h^p B_1(0)
      \subset \inter \mathcal{R}(t_{i+1})
 \end{align*}
which is a contradiction to $x \notin \mathcal{R}(t_{i+1})$. \\
Therefore, $x \notin \mathcal{R}_{h\Delta}(t_i)$  which
contradicts $x\in \inter(\mathcal{R}_{h\Delta}(t_i))\setminus \mathcal{R}_{h\Delta}(t_{i-1})$.
Hence, the starting assumption $T(x) > t_{i+1}$ must be wrong which proves 
$T(x) \leq t_{i+1}$. Consequently, 1) and 2) are proved.\\
Notice that 
\begin{enumerate}
\item[a)]  the case 1) means
       \begin{alignat*}{2}
          T(x_j) & \in  [t_{i-1}, t_{i+1}] & \quad & (i \geq 1), \\
          T(x_j) &  =t_0 & \quad & (i = 0)
        \end{alignat*}
and $| T(x_j) - T_{h\Delta}(x_j) |  \leq \Delta t $ due to $ T_{h\Delta}(x_j)=t_i,\,i=0,\ldots,K$.
\item[b)] from the case 2), we obtain 
 \begin{align*}
           T(x ) & \in  (t_{i-2}, t_{i+1}] \quad (i \geq 2), \\
           T_{h\Delta}(x_j)& - T(x)   < t_i - t_{i-2} = 2 \Delta t, \\
    T_{h\Delta}(x_j) & - T(x)   > t_{i-1} - t_{i+1} = -2 \Delta t. 
 \end{align*}
Therefore, $    | T(x)  - T_{h\Delta}(x_j) |  \leq 2 \Delta t$
for $i \geq 2$ (similarly with estimates  for $i=0,1$).
\end{enumerate}
Altogether, \eqref{fullestTt2} is proved.
\end{proof}
\subsection{Convergence and reconstruction of discrete optimal trajectories}
In this sub\-section we first prove the convergence of the normal cones of $\mathcal{R}_{h\Delta}(\cdot)$ to the ones of the continuous-time reachable set $\mathcal{R}(\cdot)$ in an appropriate sense. Using this result we will be able to reconstruct discrete optimal trajectories to reach the target from a set of given points and also derive the proof of $L^1$-convergence of discrete optimal controls.
 In the following only convergence under weaker assumptions and no convergence order 1 as in~\cite{ABGL-13} 
are proved (see more references 
therein for the classical field of direct discretization methods). 
We also restrict to linear minimum time problems.\\
The following theorem plays an important role in this reconstruction and will deal with the convergence of the normal cones. If the normal vectors of $\mathcal{R}_{h\Delta}(\cdot)$ converge to the corresponding ones of  $\mathcal{R}(\cdot)$, the discrete optimal controls can be computed with the discrete Pontryagin Maximum Principle under suitable assumptions.\\
For the remaining part of this subsection let us consider a 
fixed index $i\in \bb{1,2\ldots,K}$.
We choose a space discretization $\Delta = \Delta(h)$ with $\Orem{\Delta} = \Orem{h^p}$ 
(compare with~\cite[Sec.~3.1]{BPhD}) and often suppress the index $\Delta$ for the approximate solutions and controls.
\begin{Theorem}\label{theo:normaconver}
Consider a discrete approximation of reachable sets of type (I)--(III) with
 \begin{align} \label{eq:conv_reach_sets}
    \lim_{h \downarrow 0} \dH(\mathcal{R}_{h\Delta}(t_i),\mathcal{R}(t_i)) & = 0.
 \end{align}
Under Assumptions~\ref{standassum}, the set-valued maps $x \mapsto N_{\mathcal{R}_{h\Delta}(t_i)}(x)$ converge graphically to the set-valued map $x \mapsto N_{\mathcal{R}(t_i)}(x)$
for $i=1,\ldots,K$.
\end{Theorem}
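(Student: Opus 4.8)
The plan is to realize the normal cone maps as subdifferentials of indicator functions and then invoke Attouch's theorem (Theorem~\ref{theo:attouch}). Write $C_h := \mathcal{R}_{h\Delta}(t_i)$ and $C := \mathcal{R}(t_i)$. By Theorem~\ref{theo:propertyR} the set $C$ is nonempty, compact and convex, and by the convex-hull construction in Algorithm~\ref{algorithm} each $C_h$ is a polytope, hence also nonempty, compact and convex; so Proposition~\ref{pro:indicator} guarantees that their indicator functions $I_C$ and $I_{C_h}$ are proper, lower semicontinuous and convex. The key identification, which is standard and in fact built into Definition~\ref{def:sub}, is that $\partial I_C(x) = N_C(x)$ and $\partial I_{C_h}(x) = N_{C_h}(x)$ for every $x$: since $\epi I_C = C \times [0,\infty)$, a vector $v$ is a subgradient of $I_C$ at $x$ exactly when $(v,-1) \in N_{C \times [0,\infty)}(x,0) = N_C(x) \times (-\infty,0]$, i.e.~exactly when $v \in N_C(x)$.

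Granting this identification, I would argue that by Attouch's theorem it suffices to prove that $I_{C_h}$ epi-converges to $I_C$; the graphical convergence of $\partial I_{C_h} = N_{C_h}$ to $\partial I_C = N_C$ in the sense of Definition~\ref{def:graph_epi_graph_lim} then follows immediately. Because $\epi I_{C_h} = C_h \times [0,\infty)$ and $\epi I_C = C \times [0,\infty)$, epi-convergence of the indicators reduces to Painlev\'e--Kuratowski convergence $C_h \to C$ of the sets themselves in the sense of Definition~\ref{def:setconvg}, and this is in turn implied by the hypothesis~\eqref{eq:conv_reach_sets} that $\dH(C_h,C) \to 0$, since Hausdorff convergence of a uniformly bounded family of compact sets dominates Painlev\'e--Kuratowski convergence.

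To make the epi-convergence explicit I would check the two defining inequalities. For the lower epi-limit: if $x \notin C$ then $\di(x,C) > 0$, so for any $x_h \to x$ one gets $x_h \notin C_h$ for $h$ small, using $\di(x_h,C_h) \geq \di(x,C) - \norm{x_h - x} - \dH(C_h,C)$, whence $\liminf_h I_{C_h}(x_h) = +\infty = I_C(x)$; for $x \in C$ the inequality is trivial since $I_{C_h} \geq 0$. For the upper epi-limit I would produce a recovery sequence: if $x \in C$, take $x_h$ to be the nearest point of $C_h$ to $x$, so that $\norm{x_h - x} = \di(x,C_h) \leq \dH(C,C_h) \to 0$ and $I_{C_h}(x_h) = 0 = I_C(x)$, while for $x \notin C$ there is nothing to prove. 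This gives $\epi I_{C_h} \to \epi I_C$ and closes the argument.

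The work is therefore conceptual rather than computational, and the only genuine subtlety I anticipate is the passage from Hausdorff to Painlev\'e--Kuratowski convergence: one must use that the sets $C_h$ stay in a fixed bounded region, which holds because they converge in Hausdorff distance to the compact set $C$, so that the two notions of set convergence coincide and Attouch's theorem applies legitimately. Everything else rests on the two standard facts $\partial I_C = N_C$ and ``epi-convergence of indicators equals set convergence'', both available from~\cite{Rockaf}.
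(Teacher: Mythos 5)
Your proposal is correct and follows essentially the same route as the paper: both pass from Hausdorff convergence to Painlev\'e--Kuratowski set convergence, identify epi-convergence of the indicator functions $I_{\mathcal{R}_{h\Delta}(t_i)} \to I_{\mathcal{R}(t_i)}$ with that set convergence, use $\partial I_C = N_C$, and conclude by Attouch's theorem. The only difference is that you verify by hand (the liminf inequality, the recovery sequence, and the product-cone computation for $\partial I_C = N_C$) what the paper simply cites from Rockafellar--Wets (Example~4.13, Proposition~7.4(f), Exercise~8.14), which is a matter of exposition rather than substance.
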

\begin{proof}
    Let us recall that, under Assumptions \ref{standassum} and by the construction in Subsec.~\ref{subsec:sv_discr_meth}, $\mathcal{R}_{h\Delta}(t_i)$, $\mathcal{R}(t_i)$ are 
    convex, compact and nonempty sets.
    Moreover, we also have that the indicator functions 
     $I_{\mathcal{R}_{h\Delta}(t_i)}(\cdot),I_{\mathcal{R}(t_i)}(\cdot)$ are lower semicontinuous convex functions (see Proposition~\ref{pro:indicator}). 
By \cite[Example~4.13]{Rockaf} the convergence in~\eqref{eq:conv_reach_sets} with respect to the
Hausdorff set also implies the set convergence in the sense of Definition~\ref{def:setconvg}. 
Hence, \cite[Proposition~7.4(f)]{Rockaf} applies and shows that the corresponding
indicator functions converge epi-graphically. Since the subdifferential of the (convex)
indicator functions coincides with the normal cone by~\cite[Exercise~8.14]{Rockaf},
Attouch's Theorem~\ref{theo:attouch} yields the graphical convergence of the corresponding
normal cones.
\end{proof}
The remainder deals with the reconstruction of discrete optimal trajectories and 
the proof of convergence of optimal controls in the \emph{$L^1$-norm}, 
i.e.~$\int_{0}^{t_i}\|\hat{u}(t)-\hat{u}_h(t) \|_1dt\rightarrow 0$ as $h\downarrow 0$
for $\hat{u}(\cdot),\,\hat{u}_h(\cdot)$ being defined later,  where 
the \emph{$\ell_1$-norm} is defined for $x\in\R^n$ as $ \|x\|_1=\sum_{i=1}^{n}|x_i|$. 
To illustrate the 
idea, we confine to a special form of the target and control set, i.e.~$\mathcal S=\bb{0},\,U=[-1,1]^m,\,t\in [0,t_i]$ 
and the time invariant time-reversed linear system
\begin{align}
\label{eq:invardyn}
\begin{cases}
\dot{y}(t)&=\bar{A}y(t)+\bar Bu(t),\ u(t)\in [-1,1]^m,\\
y(0)&=0.
\end{cases}
\end{align}
Algorithm \ref{algorithm} can be interpreted pointwisely in this context as follows. For any $y_{(i-1)N}\in Y_{h\Delta}(t_i)$ there exists a sequence of controls $\bb{u_{kj}}^{ k=1,\ldots,i-1}_{j=0,\ldots,N}$ such that
\begin{equation}\label{eq:numpointw}
\begin{cases}
y_{(k-1)N}&=\Phi_h\big(t_k,t_{k-1}\big)y_{(k-1)0}+h\sum_{j=0}^{N}c_{kj}\Phi_h(t_k,t_{(k-1)j})\bar Bu_{(k-1)j},\\
y_{00}&=0
\end{cases}
\end{equation}
for $k=1,\ldots,i$. Thus $$y_{(i-1)N}=h\sum_{k=1}^{i}\sum_{j=0}^{N}c_{kj}\Phi_h(t_i,t_{(k-1)j})\bar Bu_{(k-1)j}.$$
The continuous-time adjoint equation of~\eqref{eq:invardyn} written for $n$-row vectors reads as
\begin{equation}\label{eq:adjoint}
\begin{cases}
\dot{\eta}(t)&=- \eta(t)\bar{A}\\
\eta(t_i)&=\zeta,
\end{cases}
\end{equation}
and its discrete version, approximated by the same method (see~\cite[Chap.~5]{Ge}) as the one used 
to discretize \eqref{eq:invardyn}, i.e.~\eqref{eq:numpointw}, can be written as follows. For $k=i-1,i-2,\ldots,0$ and $j=N,N-1,\ldots,1$,
\begin{equation}\label{eq:disadjoint}
\begin{cases}
\eta_{k(j-1)}&= \eta_{kj}  \Phi _h(t_{kj},t_{k(j-1)})  \\
\eta_{(i-1)N}&=\zeta_{h},
\end{cases}
\end{equation}
where $\zeta  ,\,\zeta_h  $ will be clarified later.
By the definition of $t_{kj}$ (see Algorithm~\ref{algorithm}) the index $k0$ can be replaced by $(k-1)N$, the solution of \eqref{eq:disadjoint} in backward time is therefore possible.  Here, the end condition will be chosen subject to certain transversality conditions, see the latter reference for more details. 

 Due to well-known arguments (see e.g.~\cite[Sec.~2.2]{LM}) 
the end point of the time-optimal solution lies on the boundary of the reachable set and the adjoint solution
$\eta(\cdot)$ is an outer normal at this end point.
Similarly, this also holds in the discrete case. The following proposition formulates this fact by a discrete version of \cite[ Sec.~2.2, Theorem~2]{LM}. The proof is just a translation of the one of the cited theorem in \cite{LM} to the discrete language. For the sake of clarity, we will formulate and prove it in detail.
\begin{Proposition}
Consider the system \eqref{eq:invardyn} in $\R^n$  with its adjoint problem~\eqref{eq:adjoint} as well as their discrete pendants \eqref{eq:numpointw}, \eqref{eq:disadjoint} respectively. Let $ \bb{u_{kj}} $  be a sequence of controls, $ \bb{y_{kj}} $ be its corresponding discrete solution. Then under Assumptions \ref{standassum}, for $h$ small enough,  
$y_{(i-1)N} \in Y_{h\Delta}(t_i)$ if and only of there exists nontrivial solution $\bb{\eta_{kj}}$ of \eqref{eq:disadjoint} such that 
$$\eta_{kj}\bar B u_{kj}=\max_{u\in U} \bb{\eta_{kj} \bar Bu}$$
 for $k=0,...,i-1,\,\,j=0,...,N$, where $Y_{h\Delta}(t_i)$ is defined as in Algorithm \ref{algorithm}.
\end{Proposition}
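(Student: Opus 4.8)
The plan is to recast the claim as the support-function characterization of a supporting point of a Minkowski sum, which is exactly the discrete transcription of \cite[Sec.~2.2, Theorem~2]{LM}. First I would collapse the recursion~\eqref{eq:numpointw} (after the shift $k\mapsto k+1$) into the single Minkowski combination
\begin{equation*}
  y_{(i-1)N} = h\sum_{k=0}^{i-1}\sum_{j=0}^{N} c_{kj}\,\Phi_h(t_i,t_{kj})\bar B\,u_{kj},
\end{equation*}
so that the underlying reachable set is
$\mathcal{R}_h(t_i) = h\sum_{k=0}^{i-1}\sum_{j=0}^{N} c_{kj}\,\Phi_h(t_i,t_{kj})\bar B\,U$
with nonnegative quadrature weights $c_{kj}\ge 0$. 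The special choice $\mathcal{S}=\{0\}$ and $U=[-1,1]^m$ renders the target and control discretizations exact (so $U_\Delta=U$), whence $y_{(i-1)N}\in Y_{h\Delta}(t_i)$ amounts to $y_{(i-1)N}$ being a supporting point of $\mathcal{R}_h(t_i)$, i.e.~$\scal{\zeta}{y_{(i-1)N}}=\delta^*(\zeta,\mathcal{R}_h(t_i))$ for some nonzero direction $\zeta$ (a normal $l^k\in S_{\mathcal{R}}^{\Delta}$).

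Next I would exploit the additivity of the support function. By Proposition~\ref{prop:Minkowski},
\begin{equation*}
  \delta^*(\zeta,\mathcal{R}_h(t_i)) = h\sum_{k=0}^{i-1}\sum_{j=0}^{N} c_{kj}\,\delta^*\big((\Phi_h(t_i,t_{kj})\bar B)^T\zeta,\,U\big),
\end{equation*}
while by the definition of $y_{(i-1)N}$ one has
$\scal{\zeta}{y_{(i-1)N}} = h\sum_{k,j} c_{kj}\,\scal{(\Phi_h(t_i,t_{kj})\bar B)^T\zeta}{u_{kj}}$.
Subtracting, each summand $c_{kj}\big(\delta^*((\Phi_h(t_i,t_{kj})\bar B)^T\zeta,U)-\scal{(\Phi_h(t_i,t_{kj})\bar B)^T\zeta}{u_{kj}}\big)$ is nonnegative since $u_{kj}\in U$; hence $y_{(i-1)N}$ is a supporting point in direction $\zeta$ if and only if every term with $c_{kj}>0$ vanishes, i.e.
\begin{equation*}
  \scal{(\Phi_h(t_i,t_{kj})\bar B)^T\zeta}{u_{kj}} = \max_{u\in U}\,\scal{(\Phi_h(t_i,t_{kj})\bar B)^T\zeta}{u}.
\end{equation*}

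It then remains to identify this with the adjoint maximum condition. Solving the backward recursion~\eqref{eq:disadjoint} with terminal value $\eta_{(i-1)N}=\zeta_h$ and using the semigroup property of $\Phi_h$ (with the identification $k0\leftrightarrow(k-1)N$ noted in the text) gives $\eta_{kj}=\zeta_h\,\Phi_h(t_i,t_{kj})$, so $\eta_{kj}\bar B u = \scal{(\Phi_h(t_i,t_{kj})\bar B)^T\zeta_h^T}{u}$. Choosing $\zeta_h:=\zeta^T$ turns the displayed maximization into $\eta_{kj}\bar B u_{kj}=\max_{u\in U}\{\eta_{kj}\bar Bu\}$, proving "$\Rightarrow$"; reading the same chain backwards recovers $\scal{\zeta}{y_{(i-1)N}}=\delta^*(\zeta,\mathcal{R}_h(t_i))$ from a nontrivial adjoint and hence $y_{(i-1)N}\in Y_{h\Delta}(t_i)$, giving "$\Leftarrow$".

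The main obstacle is the bookkeeping around nontriviality and the rôle of "$h$ small enough": for small $h$ the discrete transition matrices $\Phi_h(t_i,t_{kj})=I_n+\Orem{h}$ are invertible, which guarantees that a nonzero terminal direction $\zeta_h\neq 0$ propagates to a nontrivial solution $\{\eta_{kj}\}$ and conversely, so that neither implication degenerates. A second point of care is that the per-term argument only forces the maximum condition at indices with $c_{kj}>0$; for the positively-weighted rules used here this exhausts the required $(k,j)$, but the statement should be read with that proviso, and likewise $Y_{h\Delta}(t_i)$ must be understood as the set of supporting points so that the recovered direction $\zeta=\zeta_h^T$ is admissible. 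All remaining steps---re-indexing the sums and transposing between the row-vector adjoint and the column-vector directions---are routine translations of the continuous proof in~\cite{LM}.
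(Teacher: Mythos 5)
Your proposal is correct and is essentially the paper's own argument recast in support-function language: both proofs identify the terminal adjoint value with an outer normal $\zeta_h$ to the convex discrete reachable set at $y_{(i-1)N}$, back-propagate it through the invertible (for $h$ small) matrices $\Phi_h$ so that $\eta_{kj}=\zeta_h\,\Phi_h(t_i,t_{kj})$, and then observe that the resulting support equality holds precisely when each weighted term attains its maximum over $U$ --- where the paper executes this last step as a contradiction with an explicitly improved control $\tilde u_{kj}$, you execute it as the vanishing of nonnegative per-term gaps via Proposition \ref{prop:Minkowski}, which is the same argument in dual form. The caveats you flag are genuine features of the statement rather than defects of your write-up: the paper's proof likewise only forces the maximum condition at indices with $c_{kj}>0$, and it likewise identifies $Y_{h\Delta}(t_i)$ with $\partial\mathcal{R}_{h\Delta}(t_i)$ and suppresses the distinction between the fully discrete set $\mathcal{R}_{h\Delta}(t_i)$ and the semi-discrete Minkowski sum you work with.
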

\begin{proof}
Assume that $\bb{u_{jk}}$ is such that $y_{(i-1)N}$ by the response 
$$y_{(i-1)N}=h\sum_{k=1}^{i}\sum_{j=0}^{N}c_{kj}\Phi_h(t_i,t_{(k-1)j})\bar Bu_{(k-1)j}.$$
Since $\mathcal R_{h\Delta}(t_i)$ is a compact and convex set by construction, there exists a supporting hyperplane $\gamma$ to $\mathcal R_{h\Delta}(t_i)$ at $y_{(i-1)N}$. Let $\zeta_h$ be the outer normal vector of $\mathcal R_{h\Delta}(t_i)$ at $y_{(i-1)N}$. Define the nontrivial discrete adjoint response \eqref{eq:disadjoint}, i.e.
\begin{equation*} 
\begin{cases}
\eta_{k(j-1)}&= \eta_{kj}  \Phi _h(t_{kj},t_{k(j-1)})  \\
\eta_{(i-1)N}&=\zeta_{h},
\end{cases}
\end{equation*}
Then $\eta_0=\eta_{(i-1)N} \Phi _h(t_i, 0)=\zeta_h\, \Phi _h(t_i, 0)$. Noticing that $\Phi _h(t_{kj},t_{k(j-1)} )$ is a perturbation of the identity matrix $I_n$, there exists $\bar{h}$ such that $\Phi _h(t_{kj},t_{k(j-1)} )$ is invertible for $h\in [0,\bar{h}]$ and so is $\Phi _h(t_i, 0)$. Therefore, $\eta_{(i-1)N}=\eta_0  \Phi _h^{-1}(t_i, 0)$. Now we compute the inner product of $\eta_{(i-1)N},\,y_{(i-1)N}$:
\begin{equation*}
\begin{aligned}
\eta_{(i-1)N}\,y_{(i-1)N}&= \displaystyle \eta_0  \Phi _h^{-1}(t_i, 0) \Big(h\sum_{k=1}^{i}\sum_{j=0}^{N}c_{kj}\Phi_h(t_i,t_{(k-1)j})\bar Bu_{(k-1)j}\Big)\\
&=h\sum_{k=1}^{i}\sum_{j=0}^{N}c_{kj} \eta_0  \Phi _h^{-1}(t_i, 0) \Phi_h(t_i,t_{(k-1)j})\bar Bu_{(k-1)j}\\
&=h\sum_{k=1}^{i}\sum_{j=0}^{N}c_{kj} \eta_0 \Phi _h^{-1}(t_{(k-1)j}, 0) \Phi _h^{-1}(t_i, t_{(k-1)j})  \Phi_h(t_i,t_{(k-1)j})\bar Bu_{(k-1)j}\\
&=h\sum_{k=1}^{i}\sum_{j=0}^{N}c_{kj} \eta_0 \Phi _h^{-1}(t_{(k-1)j}, 0) \bar Bu_{(k-1)j}=h\sum_{k=1}^{i}\sum_{j=0}^{N}c_{kj} \eta_{(k-1)j} \bar Bu_{(k-1)j}\\
\end{aligned}
\end{equation*}
Now assume that $\eta_{kj}\bar B u_{kj}< \max_{u\in U} \bb{\eta_{kj} \bar Bu}$ for some indices $k,\,j$. Then define another sequence of controls as follows
\begin{equation*}
\tilde{u}_{kj}=
\begin{cases}
u_{kj} &\text{ if } \eta_{kj}\bar B u_{kj}=\max_{u\in U} \bb{\eta_{kj} \bar Bu}\\
\max_{u\in U} \bb{\eta_{kj} \bar Bu} &\text{ otherwise}.
\end{cases}
\end{equation*}
Let $\tilde{y}_{(i-1)N}$ be the end point of the discrete trajectory following $\bb{\tilde{u}_{kj}}$. We have
\begin{equation*}
 \eta_{(i-1)N}\,\tilde{y}_{(i-1)N}=h\sum_{k=1}^{i}\sum_{j=0}^{N}c_{kj} \eta_{(k-1)j} \bar B \tilde u_{(k-1)j}
 \end{equation*}
which implies that 
 $\eta_{(i-1)N}\, y_{(i-1)N}<\eta_{(i-1)N}\,\tilde{y}_{(i-1)N}$ or $\eta_{(i-1)N}( \tilde{y}_{(i-1)N}-y_{(i-1)N})>0$ which contradicts the construction of $\eta_{(i-1)N}=\zeta_h$, an outer normal vector of $\mathcal R_{h\Delta}(t_i)$ at $y_{(i-1)N}$. Therefore, $\eta_{kj}\bar B u_{kj}= \max_{u\in U} \bb{\eta_{kj} \bar Bu}$.\\
 Conversely, assume that for some nontrivial discrete adjoint response $\eta_{(i-1)N}=\eta_0  \Phi _h^{-1}(t_i, 0)$, the controls satisfies
 \begin{equation}\label{eq:asscontrol}
 \eta_{kj}\bar B u_{kj}= \max_{u\in U} \bb{\eta_{kj}\bar B u}
 \end{equation}
  for every indices $k=0,...,i-1,\,j=0,...,N$. We will show that the end point $y_{(i-1)N}$ of the corresponding trajectory $\bb{y_{kj}}$ will lie at the boundary of $\mathcal R_{h\Delta}(t_i)$, not at any point belonging to its interior. Suppose, by contradiction, $y_{(i-1)N}$ lies in the interior of 
 $\mathcal R_{h\Delta}(t_i)$. Let $\tilde{y}_{(i-1)N}$ be a point reached by a sequence of controls $\bb{\tilde{u}_{kj}}$ in $\mathcal R_{h\Delta}(t_i)$ in such that 
 \begin{equation}\label{eq:ineqcontrass}
 \eta_{(i-1)N}y_{(i-1)N} < \eta_{(i-1)N}\tilde{y}_{(i-1)N}.
 \end{equation}
Our assumption \eqref{eq:asscontrol} implies that 
 \begin{equation}\label{eq:ineqcontr}
 \eta_{kj}\bar B \tilde{u}_{kj}\le  \eta_{kj} \bar Bu_{kj}
 \end{equation} for all $k,j$. As above, due to \eqref{eq:ineqcontr}, we show that
 $\eta_{(i-1)N}\tilde{y}_{(i-1)N}\le \eta_{(i-1)N}y_{(i-1)N}$ which is a contradiction to \eqref{eq:ineqcontrass}. Consequently, $y_{(i-1)N} \in \partial \mathcal R_{h\Delta}(t_i)=Y_{h\Delta}(t_i)$.
\end{proof}

Motivated by the outer normality of the adjoints in continuous resp.~discrete time and the
maximum conditions, we 
define the optimal controls $\hat{u}(t),\,\hat{u}_h(t)$ as follows
\begin{equation}\label{def:contr}
\left\{
\begin{aligned}
\hat{u}(t) & =\sign (\eta(t)\bar B )^\top & \ \, & \text{for } (t \in [0,t_i]), \\
\hat{u}_h(t)&=\hat{u}_{kj}  & & \text{if } t\in [t_{kj},t_{k(j+1)}),\,k=0,...,i-1,\, j=0,...,N-1,\\
\hat{u}_h(t_{(i-1)N})&=\hat{u}_{(i-1)(N-1)} & & \text{for } t=t_{(i-1)N},
\end{aligned}
\right.
\end{equation}
where $\hat{u}_{kj}=\sign (\eta_{kj}\bar B)^\top,\,k=0,...,i-1,\,j=0,...,N$ 
and 
\begin{equation*}
w := \sign(v) \text{ with } w_\mu =
\begin{cases}
1 &\text{ if } v_\mu>0,\\
0 &\text{ if } v_\mu=0,\\
-1 &\text{ if } v_\mu<0
\end{cases}
\end{equation*}
is the \emph{signum function} and $v,w \in \R^m$, $\mu=1,\ldots,m$.

Owing to Theorem~\ref{theo:normaconver}, we have that the set-valued maps $(N_{\mathcal{R}_{h\Delta}(t_i)}(\cdot))_h$ converge graphically to $N_{\mathcal{R}(t_i)}(\cdot)$ 
which implies
that for every 
sequence $(y_{(i-1)N},\eta_{(i-1)N})_N$ 
in the graphs there exists an element $(y(t_i),\eta(t_i))$
of the graph such that 
\begin{equation}\label{eq:convernorvec}
(y_{(i-1)N},\eta_{(i-1)N}) \rightarrow (y(t_i),\eta(t_i)) \text{ as } h \downarrow 0,
\end{equation}
where $\eta_{(i-1)N} \in N_{\mathcal{R}_{h\Delta}(t_i)}(y_{(i-1)N}),\,\eta(t_i)\in N_{\mathcal{R}(t_i)}(y(t_i))$. Thus $\zeta,\,\zeta_h$ are chosen such that \eqref{eq:convernorvec} is realized.
%
Then it is obvious that $\eta_{kj} \rightarrow \eta(t_{kj})$ as $h \downarrow 0$ with $k=0,...,i-1$ uniformly in $j=0,...,N$. 

For a function $g \colon I \rightarrow \R^m$, we denote the total variation $V(g,I):=\sum_{1}^{m}V(g_i,I)$, where $V(g_i,I)$ is a usual total variation of the $i$-th components of $g$ over a bounded interval $I\in \R$. 
Now if we assume furthermore that if the system \eqref{eq:invardyn}  
is normal, $\hat{u}_h(t)$ converges to $\hat{u}(t)$ in 
the $L^1$-norm.
\begin{Proposition}
Consider that the minimum time problem with the dynamics~\eqref{eq:invardyn} in $\R^n$. Assume that the normality condition holds, i.e. 
\begin{equation}\label{eq:rank}
\rk \bb{B\omega,AB\omega,\ldots,A^{n-1}B\omega}=n 
\end{equation}
for each (nonzero) vector $\omega$
 along an edge of $U=[-1,1]^m$ or along the two end points of the interval $U=[-1,1]$ if $m=1$. Then, under Assumptions \ref{standassum}, $\int_{0}^{t_{i}} \|\hat{u}(t)-\hat{u}_h(t)\|_1dt \rightarrow 0$ as $h\rightarrow 0$ for any $i\in \bb{1,\ldots,K}$.
\end{Proposition}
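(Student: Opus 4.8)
\emph{The plan is to reduce the claimed $L^1$-convergence to pointwise almost-everywhere convergence of the integrand and then invoke dominated convergence.} First I would note that both $\hat u(\cdot)$ and $\hat u_h(\cdot)$ take values in $[-1,1]^m$ by~\eqref{def:contr}, so that $\|\hat u(t)-\hat u_h(t)\|_1\le 2m$ uniformly on $[0,t_i]$. This integrable majorant means it suffices to prove $\hat u_h(t)\to\hat u(t)$ for almost every $t\in[0,t_i]$, after which the dominated convergence theorem closes the argument.

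The key role of the normality assumption~\eqref{eq:rank} is to control the switching function. Writing $\sigma(t):=\eta(t)\bar B$, the adjoint $\eta(\cdot)$ solves the constant-coefficient linear system~\eqref{eq:adjoint}, so each component $\sigma_\mu(t)=(\eta(t)\bar B)_\mu$ is real-analytic on $[0,t_i]$. I would show that normality forces each $\sigma_\mu$ to be not identically zero: if $\sigma_\mu\equiv 0$ on a subinterval, then all its derivatives vanish there, which yields $\zeta\,\bar A^k\bar B e_\mu=0$ for all $k\ge 0$. Since the edges of $U=[-1,1]^m$ run along the coordinate directions, and since $\{\bar B e_\mu,\bar A\bar B e_\mu,\ldots,\bar A^{n-1}\bar B e_\mu\}$ spans $\R^n$ by~\eqref{eq:rank} (the sign change relating $(A,B)$ to $(\bar A,\bar B)$ in the time-invariant reversed system does not affect the rank), this forces the nontrivial $\zeta$ to vanish, a contradiction. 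Being real-analytic and not identically zero, each $\sigma_\mu$ has only finitely many zeros on the compact interval $[0,t_i]$, so $\hat u(t)=\sign(\sigma(t))^\top$ is well-defined and piecewise constant off a finite, hence measure-zero, set of switching times.

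Next I would establish the pointwise convergence away from these switches. Fix $t\in[0,t_i]$ with $\sigma_\mu(t)\neq 0$ for every $\mu$ (a full-measure set of $t$), and for each step size let $t_{kj}$ be the left grid node of the subinterval containing $t$, so that $\hat u_h(t)=\sign(\eta_{kj}\bar B)^\top$ and $t_{kj}\to t$ as $h\downarrow 0$. Splitting
$$
  \eta_{kj}\bar B-\sigma(t)=\big(\eta_{kj}-\eta(t_{kj})\big)\bar B+\big(\eta(t_{kj})-\eta(t)\big)\bar B,
$$
the first term tends to $0$ by the uniform convergence $\eta_{kj}\to\eta(t_{kj})$ derived above from Theorem~\ref{theo:normaconver}, and the second tends to $0$ by continuity of $\eta(\cdot)$ together with $t_{kj}\to t$. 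Since $\sigma_\mu(t)\neq 0$, for $h$ small enough $(\eta_{kj}\bar B)_\mu$ carries the same sign as $\sigma_\mu(t)$ for each $\mu$, whence $\hat u_h(t)=\hat u(t)$ for all sufficiently small $h$. Combining this almost-everywhere convergence with the uniform bound $2m$ gives $\int_0^{t_i}\|\hat u(t)-\hat u_h(t)\|_1\,dt\to 0$.

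The step I expect to be the main obstacle is the analyticity-plus-normality argument pinning down that the switching set has measure zero: everything downstream is a standard mesh-refinement-and-dominated-convergence routine, but the whole scheme collapses if some $\sigma_\mu$ could vanish on a set of positive measure, since then $\hat u$ would be genuinely ambiguous and no pointwise limit could be claimed. Making the passage from ``$\sigma_\mu\equiv 0$ on an interval'' to the rank contradiction fully rigorous---in particular identifying the relevant edge directions of the box $U$ with the columns of $\bar B$ and tracking the sign convention relating $(A,B)$ to $(\bar A,\bar B)$---is where the care is required.
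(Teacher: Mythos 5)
Your proof is correct, but it follows a genuinely different route from the paper's. The paper argues quantitatively: on each grid cell $I_{kj}$ it splits $\|\hat u(t)-\hat u_h(t)\|_1 \le \|\hat u(t)-\hat u(t_{kj})\|_1 + \|\hat u(t_{kj})-\hat u_h(t_{kj})\|_1$, sums up, and bounds the total by $h\,V(\hat u,[t_0,t_i]) + h\sum_{k,j}\|\sign(\eta(t_{kj})\bar B)^\top-\sign(\eta_{kj}\bar B)^\top\|_1$; the variation is finite because normality yields finitely many switchings (citing \cite[Sec.~2.5, Corollary~2]{LM}), and the sign-mismatch sum is asserted to be bounded, so both terms vanish as $h\to 0$. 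You instead prove almost-everywhere pointwise convergence of $\hat u_h$ to $\hat u$ (sign stability of the switching function off its finitely many zeros, which you derive from analyticity plus the rank condition rather than quoting Lee--Markus) and conclude by dominated convergence with the trivial majorant $2m$. What each buys: the paper's splitting is in principle quantitative (the variation term is explicitly $O(h)$), but its weakest point is precisely the asserted uniform boundedness of the sign-mismatch sum, which has $O(1/h)$ terms and really requires a counting argument near the switching times (grid nodes within $\delta$ of a switch contribute $O(\delta/h)$ terms, so one gets $O(\delta)+o(1)$ and then lets $\delta\to 0$); your dominated-convergence route sidesteps this delicate counting entirely and is arguably the more airtight argument, at the cost of giving no convergence rate. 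Both proofs rest on the same setup established before the proposition: nontriviality of the adjoints $\zeta,\zeta_h$ and the uniform convergence $\eta_{kj}\to\eta(t_{kj})$ coming from the graphical convergence of normal cones in Theorem~\ref{theo:normaconver}, and your handling of the sign convention $\bar A=-A$, $\bar B=-B$ (which leaves the span in \eqref{eq:rank} unchanged) is accurate.
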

\begin{proof}
Due to \eqref{eq:rank} $\hat{u}(t)$ defined as in \eqref{def:contr} on $t_0\le t\le t_i$ is the optimal control to reach the state $\hat{y}(t_i)$ of the corresponding optimal solution from the origin. Moreover, it has a finite number of switchings see \cite[Sec.~2.5, Corollary~2]{LM}. Therefore, the total variation, $V(\hat{u}(t),[t_0,t_i])$, 
 is bounded. Let $\displaystyle I_{kj}=[t_{kj},t_{k(j+1)}),\,\text{ for  }k=0,\ldots,i-1,\,j=0,\ldots,N-1, \text{ and except for } I_{(i-1)(N-1)}=[t_{(i-1)(N-1)},t_{(i-1)N}]$. Then 
\begin{equation}
\begin{aligned}
&\int_{I_{kj}}\|\hat{u}(t)-\hat{u}_h(t)\|_1 dt\le \int_{I_{kj}}(\|\hat{u}(t)-\hat{u}(t_{kj})\|_1+ \|\hat{u}(t_{kj})-\hat{u}_h(t_{kj})\|_1)dt\\
&\le hV(\hat{u}(t),I_{kj})+h\|\sign (\eta(t_{kj})\bar B)^\top-\sign (\eta_{kj}\bar B))^\top \|_1
\end{aligned}
\end{equation}
Taking a sum over $k=0,\ldots,i-1,\, j=0,\ldots,N-1$ we obtain
\begin{equation*}
\int_{t_0}^{t_i}\|\hat{u}(t)-\hat{u}_h(t)\|_1dt \le hV(\hat{u}(t),[t_0,t_i])+h\sum_{k=0}^{i-1}\sum_{j=0}^{N-1} \|\sign ( \eta(t_{kj})\bar B))^\top-\sign (\eta_{kj}\bar B))^\top \|_1.\\
\end{equation*}
Since $\hat{u}(t)$ has a finite number of switchings and 
$\eta_{kj}, \eta(t_{kj})$ are non-trivial with the convergence 
$\eta_{kj} \rightarrow \eta(t_{kj})$ as $h\rightarrow 0$ 
for $k=0,\ldots,i,\, j=0,\ldots,N$, the variation $V(\hat{u}(t),[t_0,t_i])$ and $\sum_{k=0}^{i}\sum_{j=0}^{N-1} \|\sign (\eta(t_{kj})\bar B))^\top-\sign (\eta_{kj}\bar B))^\top \|_1$ are bounded. Therefore, 
\begin{equation*}
\int_{t_0}^{t_i} \|\hat{u}(t)-\hat{u}_h(t) \|_1dt \rightarrow 0 \text{ as } h\rightarrow 0.
\end{equation*}
The proof is completed.
\end{proof}

\section*{Acknowledgements}
The authors want to express their thanks to Giovanni Colombo, especially for pointing us 
to Attouch's theorem, and to Lars Gr\"une.
Both of them supported us with helpful suggestions and motivating questions. 
 They are also grateful to Matthias Gerdts about his comments to optimal control.


\end{document}